\newcommand{\nc}{\newcommand}
\numberwithin{equation}{section}
\newtheorem{thm}{Theorem}[section]
\newtheorem{prop}[thm]{Proposition}
\newtheorem{lem}[thm]{Lemma}
\newtheorem{cor}[thm]{Corollary}
\theoremstyle{remark}
\newtheorem{rem}[thm]{Remark}
\newtheorem{definition}[thm]{Definition}
\newtheorem{example}[thm]{Example}
\nc{\gl}{\mathfrak{gl}}
\nc{\GL}{\mathfrak{GL}}
\nc{\g}{\mathfrak{g}}
\nc{\gh}{\widehat\g}
\nc{\h}{\mathfrak{h}}
\nc{\la}{\lambda}
\nc{\al}{\alpha }
\nc{\ve}{\varepsilon }
\nc{\om}{\omega }
\nc{\ta}{\theta}
\nc{\veps}{\varepsilon}
\nc{\ch}{{\mathop {\rm ch}}}
\nc{\Tr}{{\mathop {\rm Tr}\,}}
\nc{\Id}{{\mathop {\rm Id}}}
\nc{\ad}{{\mathop {\rm ad}}}
\nc{\bra}{\langle}
\nc{\ket}{\rangle}
\nc{\x}{{\bf x}}
\nc{\bs}{{\bf s}}
\nc{\bp}{{\bf p}}
\nc{\bc}{{\bf c}}
\nc{\pa}{\partial}
\nc{\ld}{\ldots}
\nc{\cd}{\cdots}
\nc{\hk}{\hookrightarrow}
\nc{\T}{\otimes}
\newcommand{\bea}{\begin{equation}}
\newcommand{\ena}{\end{equation}}
\nc{\gr}{\mathrm{gr}}
\nc{\ov}{\overline}
\nc{\cO}{\mathcal O}
\nc{\cF}{\mathcal F}
\nc{\cL}{\mathcal L}
\nc{\msl}{\mathfrak{sl}}
\nc{\mgl}{\mathfrak{gl}}
\nc{\U}{\mathrm U}
\nc{\V}{\EuScript V}
\nc{\bH}{\EuScript H}
\nc{\Res}{\mathrm{Res\ }}
\newcommand{\dimv}{{\bf dim}}
\begin{document}
\title{Desingularization of quiver Grassmannians for Dynkin quivers}
\author{G. Cerulli Irelli, E. Feigin, M. Reineke}
\address{Giovanni Cerulli Irelli:\newline
Mathematisches Institut, Universit\"at Bonn, Endenicher Allee 60, D - 53115 Bonn, Germany}
\email{cerulli.math@googlemail.com}
\address{Evgeny Feigin:\newline
Department of Mathematics, National Research University Higher School of Economics, Vavnilova str. 7, 117312 Moscow, Russia\newline
{\it and }\newline
Tamm Department of Theoretical Physics,
Lebedev Physics Institute, Leninskij prospekt 53, 119991 Moscow, Russia
}
\email{evgfeig@gmail.com}
\address{Markus Reineke:\newline
Fachbereich C - Mathematik, Bergische Universit\"at Wuppertal, D - 42097 Wuppertal, Germany}
\email{reineke@math.uni-wuppertal.de}
\begin{abstract} A desingularization of arbitrary quiver Grassmannians for representations of Dynkin quivers is constructed in terms of quiver Grassmannians for an algebra derived equivalent to the Auslander algebra of the quiver.
\end{abstract}
\maketitle

\section{Introduction}

Quiver Grassmannians are varieties parametrizing subrepresentations of a quiver representation. They first appeared in \cite{CrawleyTree,SchofieldGeneric} in relation to questions on generic properties of quiver representations; later it was observed (see \cite{CC}) that these varieties play an important role in cluster algebra theory \cite{FZI} since cluster variables can be described in terms of the Euler characteristic of quiver Grassmannians. A rather well-studied specific class of quiver Grassmannians are the varieties of subrepresentations of exceptional quiver representations since they are smooth projective varieties, see for example \cite{CR}.\\[1ex]
In \cite{CFR,CFR2} the authors of the present paper initiated a systematic study of a class of singular quiver Grassmannians, starting from the observation that the type $A$ degenerate flag varieties of \cite{F2010,F2011,FF} are quiver Grassmannians. Namely, Grassmannians of subrepresentations of the direct sum $P\oplus I$ of a projective representation $P$ and an injective representation $I$ of a Dynkin quiver $\mathcal{Q}$, of the same dimension vector as $P$, are considered. They are shown in \cite{CFR} to be reduced irreducible normal local complete intersection varieties, admitting a group action with finitely many orbits, as well as a cell decomposition. Moreover, a detailed description of the singular locus of degenerate flag varieties is given in \cite{CFR2}.\\[1ex]
In pursuing the analysis of singular quiver Grassmannians, it is thus desirable to have an explicit desingularization at our disposal;
this is done in \cite{FF} for the type $A$ degenerate flag varieties.\\[1ex]
The main result (see Section \ref{section7}) of the present work is that an appropriate representation theoretic re-interpretation of the construction of \cite{FF} generalizes, and provides desingularizations of (irreducible components of) arbitrary quiver Grassmannians over Dynkin quivers. In fact, the desingularization is itself (an irreducible component of) a quiver Grassmannian over a certain quiver $\widehat{\mathcal{Q}}$, and can be described explicitly once the irreducible components of the quiver Grassmannian to be desingularized are known. Moreover, every fibre of the desingularization map is described in terms of a quiver Grassmannian over $\widehat{\mathcal{Q}}$ itself.\\[1ex]
The authors are confident that the desingularization is useful for a more refined analysis of the geometry of certain classes of singular quiver Grassmannians, with respect to, for example, Frobenius splitting, cohomology of line bundles, or intersection cohomology, in the spirit of \cite{FF}.\\[1ex]
At the heart of the construction of the desingularization lies the definition of a certain algebra $B_\mathcal{Q}$ of global dimension at most two (see Section \ref{section4}), together with a fully faithful functor $\Lambda$ from the category of representations of $\mathcal{Q}$ to the one of $B_\mathcal{Q}$ with special homological properties.
Namely, the essential image of $\Lambda$ consists of representations of $B_\mathcal{Q}$ without self-extensions and of projective and injective dimension at most $1$ (see Section \ref{section5}).\\[1ex]
The algebra $B_\mathcal{Q}$ arises as the endomorphism ring of an additive generator of a certain category
$\mathcal{H}_\mathcal{Q}$ of embeddings of projective representations of $\mathcal{Q}$ (see Section \ref{section3}); it is derived equivalent (but not Morita equivalent) to the Auslander algebra of $\mathcal{Q}$ (see Section \ref{section6}).\\[1ex]
The algebra $B_\mathcal{Q}$ and the functor $\Lambda$ should be of independent interest. The authors believe that the associated representation varieties are related to graded Nakajima quiver varieties in the spirit of \cite{HL,LP}, and that they admit applications to the geometry of orbit closures in representation varieties of $\mathcal{Q}$. These topics will be discussed elsewhere.\\[1ex]
Although (or precisely because?) the construction of the desingularization is a very general and conceptual one, it is still nontrivial to analyse, say, the dimension of the singular fibres; no general formulas or estimates are known at the moment. But a detailed analysis of all cases in type $A_2$ is given, suggesting that good geometric properties of the desingularization (like, for example, being one to one precisely over the smooth locus) can only be expected precisely in the case considered above, namely the quiver Grassmannians generalizing the type $A$ degenerate flag varieties (see Section \ref{section8}).\\[1ex]
The paper is organized as follows. In Section~\ref{section2} we collect some standard material on
categories of functors and Auslander-Reiten theory. In Section~\ref{section3} the category
$\mathcal{H}_\mathcal{Q}$ is introduced and in Section~\ref{section4} the homological properties of the
category $\bmod\,\mathcal{H}_\mathcal{Q}^{\rm op}$ are studied. In Section~\ref{section5} we define the key functor
$\Lambda:\bmod\, k\mathcal{Q}\rightarrow\bmod\,\mathcal{H}_\mathcal{Q}^{\rm op}$. Section~\ref{section6}
contains the computation of the ordinary quiver of the algebra $B_\mathcal{Q}$ and several examples.
In Section~\ref{section7} we use the results of the previous sections to construct the desingularizations
of quiver Grassmannians. Finally, in Section~\ref{section8} we close with examples
of desingularizations.

\section{Reminder on categories of functors and Auslander-Reiten theory}\label{section2}

We collect some standard material (see e.g. \cite[IV.6., A.2.]{ASS}, \cite[3.]{GR}) on the functorial approach to the representation theory of algebras and to Auslander-Reiten theory in particular. Throughout the paper, we will make free use of the basic concepts of Auslander-Reiten theory \cite[IV.]{ASS}, like almost split maps, Auslander-Reiten sequences, the Auslander-Reiten translation and its relation to the Nakayama functor, and the structure of the Auslander-Reiten quiver of a Dynkin quiver.\\[1ex]
Let $k$ be a field. All categories in the following are assumed to be $k$-linear, finite length (that is, all objects admit finite composition series) Krull-Schmidt (that is, the Krull-Schmidt theorem holds) categories with finite dimensional morphism spaces.\\[1ex]
For a category $\mathcal{C}$, we denote by $\mathcal{C}^{\rm op}$ its opposite category, and by $\bmod\, \mathcal{C}^{\rm op}$ the category of $k$-linear additive, contravariant functors from $\mathcal{C}$ to $\bmod\, k$, the category of finite dimensional $k$-vector spaces. For an object $M$ of $\mathcal{C}$, the functor ${\rm Hom}(\_,M)$ is an object of $\bmod\, \mathcal{C}^{\rm op}$. By Yoneda's lemma, we have $${\rm Hom}_{\bmod\,\mathcal{C}^{\rm op}}({\rm Hom}(\_,M),F)\simeq F(M)$$ for every $M\in\mathcal{C}$, $F\in\bmod\,\mathcal{C}^{\rm op}$. From this we can conclude that ${\rm Hom}(\_,M)$ is a projective object of $\bmod\,\mathcal{C}^{\rm op}$; in fact, every projective object is of this form. Dually, the injective objects in $\bmod\,\mathcal{C}^{\rm op}$ are the functors ${\rm Hom}(M,\_)^*$ for $M\in\mathcal{C}$, where $V^*$ denotes the linear dual of a  $k$-vector space $V$. Moreover, the simple objects in $\bmod\,\mathcal{C}^{\rm op}$ are parametrized by the isomorphism classes of indecomposable objects in $\mathcal{C}$: for such an object $U$, there exists a unique simple functor $S_U$ which is a quotient of ${\rm Hom}(\_,U)$ and embeds into ${\rm Hom}(U,\_)^*$.\\[1ex]
For a finite dimensional $k$-algebra $A$, let $\bmod\,A$ be the category of finite dimensional
(left) $A$-modules. We can consider the subcategory ${\rm proj}\, A$ of $\bmod\, A$, which is the category
of finite dimensional projective $A$-modules. Then $\bmod\,({\rm proj}\, A)^{\rm op}$ is equivalent
to $\bmod\, A$ by associating to a module $M$ the functor ${\rm Hom}(\_,M)$.\\[1ex]
Somewhat conversely, assume that $\mathcal{C}$ admits only finitely many indecomposables; let $U_1,\ldots,U_N$ be a system of representatives. Then $\bmod\,\mathcal{C}^{\rm op}$ is equivalent to $\bmod\,B(\mathcal{C})$, where $B(\mathcal{C}):={\rm End}_{\mathcal C}(\bigoplus_iU_i)^{\rm op}$; namely, by the above, ${\rm Hom}(\_,\bigoplus_iU_i)$ is a projective generator of $\bmod\,\mathcal{C}^{\rm op}$. In particular, we have $B({\rm proj}\, A)\simeq A$. If $A$ admits only finitely many indecomposables, the algebra $B(\bmod\, A)$ is called the Auslander algebra of $A$.\\[1ex]
The structure of $\bmod(\bmod A)^{\rm op}$ is related to Auslander-Reiten theory: the simple functors are precisely those of the form $S_U$ for an indecomposable $U$ in $\bmod\, A$, where $S_U$ (being additive) is determined on indecomposables by $S_U(U)=k$ and $S_U(V)=0$ for all $V\not\simeq U$. If $0\rightarrow{\tau U}\rightarrow B\rightarrow U\rightarrow 0$ is the Auslander-Reiten sequence ending in $U$, then
$$0\rightarrow{\rm Hom}(\_,\tau U)\rightarrow{\rm Hom}(\_,B)\rightarrow{\rm Hom}(\_,U)\rightarrow S_U\rightarrow 0$$
is a projective resolution of $S_U$ in $\bmod(\bmod\, A)^{\rm op}$. In particular, this category has global dimension at most two.\\[1ex]
For the category $\mathcal{C}$, we can consider the category ${\rm Hom}\,\mathcal{C}$ with objects being morphisms $f:M\rightarrow N$ in $\mathcal{C}$, and with morphisms from $f:M\rightarrow N$ to $f':M'\rightarrow N'$ being pairs $(\phi:M\rightarrow M',\psi:N\rightarrow N')$ of morphisms such that $\psi f=f'\phi$; composition is defined naturally. We also consider the full subcategories ${\rm Hom}^{\rm mono}\mathcal{C}$ (resp. ${\rm Hom}^{\rm iso}\mathcal{C}$) with objects the monomorphisms (resp. isomorphisms) between objects of $\mathcal{C}$.\\[1ex]
We sometimes denote by ${\rm ind}\,A$ the set of indecomposable (finite dimensional, left) $A$--modules.

\section{The category $\mathcal{H}_\mathcal{Q}$}\label{section3}

From now on, let $\mathcal{Q}$ be a Dynkin quiver with set of vertices $\mathcal{Q}_0$, and let $k\mathcal{Q}$ be its path algebra. We denote by $S_i$ the simple left module corresponding to a vertex $i$ of $\mathcal{Q}$, and by $P_i$ (resp.~$I_i$) its projective cover (resp.~injective hull). Then every object of ${\rm proj}\, k\mathcal{Q}$ is isomorphic to a finite direct sum of the $P_i$.\\[1ex]
We consider the category ${\rm Hom}^{\rm mono}({\rm proj}\, k\mathcal{Q})$; its objects are thus injective maps between projective representations of $\mathcal{Q}$. We note a few obvious formulas for morphisms in this category whose verification is immediate:

\begin{lem}\label{l30} For all injective maps $P\rightarrow Q$ and $Q\rightarrow R$ between projectives $P$, $Q$ and $R$, we have in ${\rm Hom}^{\rm mono}({\rm proj}\, k\mathcal{Q})$:
\begin{enumerate}
\item ${\rm Hom}(P\stackrel{{\rm id}}{\rightarrow}P,Q\rightarrow R)\simeq{\rm Hom}(P,Q)$,
\item ${\rm Hom}(P\rightarrow Q,R\stackrel{{\rm id}}{\rightarrow}R)\simeq{\rm Hom}(Q,R)$,
\item ${\rm Hom}(0\rightarrow P,Q\rightarrow R)\simeq{\rm Hom}(P,R)$,
\item Every epimorphism in ${\rm Hom}(P\rightarrow Q,0\rightarrow R)$ is split.
\end{enumerate}
\end{lem}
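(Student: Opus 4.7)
For (i)--(iii), the plan is to read off each isomorphism directly from the definition of a morphism in the arrow category, namely a pair $(\phi,\psi)$ making the obvious square commute. In each case the commutation condition either determines one component uniquely from the other (when one of the two arrows is an identity) or is vacuous (when the source has a zero entry).

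Specifically, for (i) with source $P\stackrel{\mathrm{id}}{\to}P$ the commutation forces $\psi=f\phi$, so the morphism is parametrised by the free choice $\phi\in\mathrm{Hom}(P,Q)$; for (ii) with target $R\stackrel{\mathrm{id}}{\to}R$ the commutation forces $\phi=\psi f$, so it is parametrised by $\psi\in\mathrm{Hom}(Q,R)$; for (iii) with source $0\to P$ the left component automatically vanishes and the commutation becomes $0=0$, leaving the free choice $\psi\in\mathrm{Hom}(P,R)$. Naturality in all three identifications is immediate.

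The content of (iv) is the splitting statement. A morphism $(\phi,\psi):(P\stackrel{f}{\to}Q)\to(0\to R)$ has $\phi=0$ forced, and the commutativity of the square reduces to $\psi f=0$. My plan is first to extract from the epi property that the remaining component $\psi:Q\to R$ is surjective as a map of $k\mathcal{Q}$-modules. Since $R$ is projective, $\psi$ then admits a module-theoretic section $v:R\to Q$ with $\psi v=\mathrm{id}_R$. I then promote $v$ to a morphism $(0:0\to P,\,v:R\to Q):(0\to R)\to(P\stackrel{f}{\to}Q)$ in $\mathcal{H}_{\mathcal{Q}}$ --- the required commutation $v\cdot 0=f\cdot 0$ is automatic, so this is indeed a morphism --- and verify that its composition with $(\phi,\psi)$ recovers the identity of $(0\to R)$, yielding the desired splitting.

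The principal obstacle is the first step of (iv): deducing surjectivity of $\psi$ from being an epimorphism in the (non-abelian) category $\mathcal{H}_{\mathcal{Q}}$. Once this is in place, the projectivity of $R$ does the remaining work automatically, and the verification that $(0,v)$ is a section in $\mathcal{H}_{\mathcal{Q}}$ is routine.
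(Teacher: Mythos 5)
Parts (i)--(iii) of your argument are correct and are exactly the kind of ``immediate verification'' the paper has in mind: writing a morphism in the arrow category as a commuting square, you observe that an identity on one side, or a zero on one side, pins down one leg of the square (or makes the commutation vacuous), and the free leg is what parametrises the Hom-space.

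For part (iv) you correctly identify that the only morphisms $(P\to Q)\to(0\to R)$ are pairs $(0,\psi)$ with $\psi f=0$, that a module-theoretic section $v$ of $\psi$ would lift to a section $(0,v)$ in the arrow category (the commutation $v\cdot 0=f\cdot 0$ being automatic), and that projectivity of $R$ supplies such a $v$ once $\psi$ is known to be surjective. That part is fine. But you flag the deduction ``epi $\Rightarrow$ $\psi$ surjective'' as an unsolved obstacle, and you are right to be worried, because \emph{that deduction is in fact false} if ``epimorphism'' is read as the categorical notion inside ${\rm Hom}^{\rm mono}({\rm proj}\,k\mathcal{Q})$. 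Concretely, take $\mathcal{Q}=(1\to 2)$ and let $\psi:P_2\hookrightarrow P_1$ be the radical inclusion. Then $(0,\psi):(0\to P_2)\to(0\to P_1)$ is an instance of (iv). By part (iii), a morphism out of $(0\to P_1)$ is just a map $\rho:P_1\to B$ with $B$ projective, and $\rho\psi=0$ forces $\rho$ to factor through $P_1/P_2\simeq S_1$; since ${\rm Hom}(S_1,B)=0$ for every projective $B$, the map $(0,\psi)$ is a categorical epimorphism, yet $\psi$ is not surjective and $(0,\psi)$ does not split. So under the categorical reading the statement itself would be wrong, let alone your proof.

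The resolution is that ``epimorphism'' here is to be read as ``epimorphism in the ambient abelian category ${\rm Hom}(\bmod\,k\mathcal{Q})$,'' i.e.\ componentwise surjective; this is the standard usage when one treats ${\rm Hom}^{\rm mono}({\rm proj}\,k\mathcal{Q})$ as a subcategory of the abelian arrow category. Under that reading the surjectivity of $\psi$ is the hypothesis, not something to be derived, and the remainder of your argument for (iv) then goes through verbatim. A small side remark: the lemma is stated in ${\rm Hom}^{\rm mono}({\rm proj}\,k\mathcal{Q})$, not in $\mathcal{H}_\mathcal{Q}$ as you write -- indeed the object $0\to R$ is explicitly excluded from $\mathcal{H}_\mathcal{Q}$, so the statement could not even be posed there.
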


Taking the cokernel of an injective map between projectives induces a functor ${\rm Coker}:{\rm Hom}^{\rm mono}({\rm proj}\, k\mathcal{Q})\rightarrow\bmod\, k\mathcal{Q}$; on morphisms it is defined by mapping a pair $(\varphi,\psi)$ to the unique $f$ making the following diagram commutative:
$$\begin{array}{ccrcrcrcc}
0&\rightarrow&P&\stackrel{\iota}{\rightarrow}&Q&\rightarrow&{\rm Coker}\,\iota&\rightarrow 0\\
&&\varphi\downarrow&&\psi\downarrow&&f\downarrow&&\\
0&\rightarrow&P'&\stackrel{\iota'}{\rightarrow}&Q'&\rightarrow&{\rm Coker}\,\iota'&\rightarrow 0
\end{array}$$

\begin{lem}\label{l31} The functor ${\rm Coker}$ is full and dense. We have $${\rm Hom}((P\stackrel{\iota}{\rightarrow} Q),(P'\stackrel{\iota'}{\rightarrow} Q'))/{\rm Hom}(Q,P')\simeq{\rm Hom}_{\bmod\, k\mathcal{Q}}({\rm Coker}\,\iota,{\rm Coker}\,\iota').$$
\end{lem}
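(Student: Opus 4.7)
The lemma has three parts: density of $\mathrm{Coker}$, fullness of $\mathrm{Coker}$, and identification of the kernel of the induced map $\mathrm{Hom}((P\xrightarrow{\iota}Q),(P'\xrightarrow{\iota'}Q'))\to\mathrm{Hom}_{\bmod\,k\mathcal{Q}}(\mathrm{Coker}\,\iota,\mathrm{Coker}\,\iota')$ with $\mathrm{Hom}(Q,P')$. All three rest on the fact that $k\mathcal{Q}$ is hereditary, so every $k\mathcal{Q}$-module $M$ admits a projective resolution $0\to P\xrightarrow{\iota} Q\to M\to 0$ of length one, together with the projectivity of $Q$ used as a lifting tool.

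First I would establish density: given $M\in\bmod\,k\mathcal{Q}$, pick any projective cover (or any surjection from a projective) $Q\to M$, and use hereditariness to conclude that the kernel $P$ is again projective, producing an object $(P\xrightarrow{\iota}Q)\in\mathrm{Hom}^{\mathrm{mono}}(\mathrm{proj}\,k\mathcal{Q})$ whose cokernel is $M$.

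Next I would prove fullness. Given $f\colon\mathrm{Coker}\,\iota\to\mathrm{Coker}\,\iota'$, I would first lift the composite $Q\to\mathrm{Coker}\,\iota\xrightarrow{f}\mathrm{Coker}\,\iota'$ to a map $\psi\colon Q\to Q'$ along the epimorphism $Q'\to\mathrm{Coker}\,\iota'$, which is possible since $Q$ is projective. By construction $\psi\iota$ maps to zero in $\mathrm{Coker}\,\iota'$, so it factors through $\iota'$; since $\iota'$ is a monomorphism there is a unique $\varphi\colon P\to P'$ with $\iota'\varphi=\psi\iota$. The pair $(\varphi,\psi)$ is a morphism in $\mathrm{Hom}^{\mathrm{mono}}(\mathrm{proj}\,k\mathcal{Q})$ whose image under $\mathrm{Coker}$ is $f$.

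For the $\mathrm{Hom}$-formula I would analyse the kernel of $\mathrm{Coker}$ at the level of morphism spaces. A pair $(\varphi,\psi)$ induces the zero map on cokernels precisely when $\psi$ lands in the image of $\iota'$; using that $\iota'$ is mono, this is equivalent to the existence of a unique $h\colon Q\to P'$ with $\psi=\iota'h$. The commutativity $\psi\iota=\iota'\varphi$ then forces $\varphi=h\iota$. Thus the assignment $h\mapsto(h\iota,\iota'h)$ defines a linear map $\mathrm{Hom}(Q,P')\to\ker(\mathrm{Coker})$ which is surjective by the analysis above and injective because $\iota'h=0$ implies $h=0$. Combined with fullness, this yields the claimed isomorphism.

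The only mildly subtle point is the injectivity of $h\mapsto(h\iota,\iota'h)$, but it is immediate from $\iota'$ being a monomorphism; there is no real obstacle, the proof is a diagram chase using hereditariness of $k\mathcal{Q}$ and the universal property of cokernels.
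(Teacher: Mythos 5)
Your proof is correct and follows essentially the same route as the paper: density via the length-one projective resolution afforded by hereditariness, fullness via lifting along the surjection $Q'\to\mathrm{Coker}\,\iota'$ using projectivity of $Q$, and the $\mathrm{Hom}$-formula by identifying the kernel of the induced map with those pairs $(\varphi,\psi)$ for which $\psi$ factors through $\iota'$. The only difference is cosmetic: you spell out explicitly that $h\mapsto(h\iota,\iota'h)$ is an injective embedding of $\mathrm{Hom}(Q,P')$ with image equal to the kernel, a point the paper leaves implicit.
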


\begin{proof} Given $M\in\bmod\, k\mathcal{Q}$, there exists a projective resolution $0\rightarrow P\stackrel{\iota}{\rightarrow} Q\rightarrow M\rightarrow 0$, thus $\iota:P\rightarrow Q$ is an object of ${\rm Hom}^{\rm mono}({\rm proj}\, k\mathcal{Q})$ mapping to $M$ under ${\rm Coker}$. This proves density. Every map $f:M\rightarrow N$ between representations lifts over projective resolutions as in the above diagram, which proves that ${\rm Coker}$ is full. Again in the above diagram, the induced morphism $f$ on cokernels is $0$ if and only if $\psi$ factors over $\iota'$, which proves the claimed isomorphism.
\end{proof}

\begin{cor} The functor ${\rm Coker}$ induces an equivalence between the quotient category ${\rm Hom}^{\rm mono}({\rm proj}\, k\mathcal{Q}	)/{\rm Hom}^{\rm iso}({\rm proj}\, k\mathcal{Q})$ and $\bmod\, k\mathcal{Q}$.
\end{cor}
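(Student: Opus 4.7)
By Lemma \ref{l31} the functor ${\rm Coker}$ is already full and dense, so the remaining task is to identify the kernel of ${\rm Coker}$ on morphism spaces with the ideal of morphisms in ${\rm Hom}^{\rm mono}({\rm proj}\,k\mathcal{Q})$ that factor through an object of ${\rm Hom}^{\rm iso}({\rm proj}\,k\mathcal{Q})$.

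First, I would compute this ideal explicitly. A morphism $(\varphi,\psi):(P\stackrel{\iota}{\rightarrow}Q)\to(P'\stackrel{\iota'}{\rightarrow}Q')$ factors through an isomorphism $R\stackrel{{\rm id}}{\rightarrow}R$ precisely when there exist morphisms $(P\to Q)\to(R\stackrel{{\rm id}}{\rightarrow}R)\to(P'\stackrel{\iota'}{\rightarrow}Q')$ whose composition is $(\varphi,\psi)$. By parts (i) and (ii) of Lemma \ref{l30}, the first such morphism is determined by a single map $b:Q\to R$ (with the $P$-component given by $b\iota$), and the second by a single map $c:R\to P'$ (with the $Q$-component given by $\iota'c$). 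Composing, the factoring morphism has the form $(\alpha\iota,\iota'\alpha)$ with $\alpha=cb:Q\to P'$. Conversely, any $\alpha:Q\to P'$ gives rise to such a factorization (take $R=P'$, $c={\rm id}$), so the ideal in question is exactly the image of ${\rm Hom}(Q,P')$ inside ${\rm Hom}((P\to Q),(P'\to Q'))$ via $\alpha\mapsto(\alpha\iota,\iota'\alpha)$.

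Next I would invoke Lemma \ref{l31} directly: it asserts that quotienting ${\rm Hom}((P\to Q),(P'\to Q'))$ by this very subspace ${\rm Hom}(Q,P')$ yields $\Hom_{\bmod\,k\mathcal{Q}}({\rm Coker}\,\iota,{\rm Coker}\,\iota')$, and the identification is induced by ${\rm Coker}$ itself. Thus ${\rm Coker}$ descends to a functor $\overline{{\rm Coker}}$ on the quotient category ${\rm Hom}^{\rm mono}({\rm proj}\,k\mathcal{Q})/{\rm Hom}^{\rm iso}({\rm proj}\,k\mathcal{Q})$ which is bijective on morphism spaces, i.e.\ fully faithful. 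Combined with density from Lemma \ref{l31}, this gives the desired equivalence.

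The only subtle point, and really the crux of the argument, is the identification in the first paragraph: one must be careful that an arbitrary factorization through \emph{any} isomorphism object $R\stackrel{\sim}{\to}R'$ (not just an identity) still yields a morphism of the form $(\alpha\iota,\iota'\alpha)$, but this reduces immediately to the identity case by absorbing the isomorphism into $b$ or $c$. Everything else is formal bookkeeping with Lemmas \ref{l30} and \ref{l31}; no new input is required.
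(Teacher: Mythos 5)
Your proof proposal is correct and follows essentially the same route as the paper: both identify the ideal of morphisms factoring through an isomorphism object with the image of ${\rm Hom}(Q,P')$ under $\alpha\mapsto(\alpha\iota,\iota'\alpha)$ (using Lemma \ref{l30}(i),(ii)), and then invoke the isomorphism of Lemma \ref{l31} to conclude full faithfulness of the induced functor, with density already established. Your remark about reducing factorizations through a general isomorphism $R\stackrel{\sim}{\to}R'$ to the identity case is a sound (if slightly more careful) version of what the paper does implicitly.
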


\begin{proof} If a morphism ${\rm Hom}((P\stackrel{\iota}{\rightarrow} Q),(P'\stackrel{\iota'}{\rightarrow} Q'))$ is of the form $(h\iota,\iota' h)$ for some $h\in{\rm Hom}(Q,P')$, it admits a factorization
$$(P\stackrel{\iota}{\rightarrow} Q)\stackrel{(\iota,{\rm id})}{\rightarrow}(Q\stackrel{{\rm id}}{\rightarrow}Q)\stackrel{(h,\iota'h)}{\rightarrow}(P'\stackrel{\iota'}{\rightarrow}Q').$$
Conversely, a factorization
$$(P\stackrel{\iota}{\rightarrow} Q)\stackrel{(\varphi\iota,\varphi)}{\rightarrow}(R\stackrel{{\rm id}}{\rightarrow}R)\stackrel{(\psi,\iota'\psi)}{\rightarrow}(P'\stackrel{\iota'}{\rightarrow}Q')$$
for $\varphi:P\rightarrow R$ and $\psi:R\rightarrow Q'$ yields a map $h=\psi\varphi$ as above. Combining this with the previous lemma, the statement follows.
\end{proof}

\begin{prop}\label{indecinhomprojkq} Up to isomorphism, the indecomposable objects of the category ${\rm Hom}^{\rm mono}{\rm proj}\, k\mathcal{Q}$ are the following:
\begin{enumerate}
\item $P_U\stackrel{\iota_U}{\rightarrow} Q_U$ for $0\rightarrow P_U\stackrel{\iota_U}{\rightarrow}Q_U\rightarrow U\rightarrow 0$ a minimal projective resolution of a non-projective indecomposable $U$ in $\bmod\, k\mathcal{Q}$,
\item $0\rightarrow P_i$ for $i\in \mathcal{Q}_0$,
\item $P_i\stackrel{{\rm id}}{\rightarrow} P_i$ for $i\in \mathcal{Q}_0$.
\end{enumerate}
\end{prop}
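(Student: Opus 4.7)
The plan is to organize the classification by the cokernel functor $\mathrm{Coker}:\mathrm{Hom}^{\mathrm{mono}}(\mathrm{proj}\,k\mathcal{Q})\to\bmod\,k\mathcal{Q}$. Given an arbitrary object $\iota:P\to Q$, set $M:=\mathrm{Coker}\,\iota$ and pick a minimal projective resolution $0\to P_M\xrightarrow{\iota_M}Q_M\to M\to 0$. The key structural claim is that there exists a projective $R$ together with isomorphisms $P\cong P_M\oplus R$ and $Q\cong Q_M\oplus R$ under which $\iota$ corresponds to $\iota_M\oplus\mathrm{id}_R$ in $\mathrm{Hom}^{\mathrm{mono}}(\mathrm{proj}\,k\mathcal{Q})$; this reduces the classification to analyzing minimal projective resolutions plus identity pieces.

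To establish the decomposition I would use a standard projective-cover argument: lift the projective cover $Q_M\to M$ through the surjection $Q\to M$ to obtain a split embedding $\sigma:Q_M\hookrightarrow Q$ with projective complement $K$. Essentiality of the projective cover forces $K\subseteq\ker(Q\to M)=\iota(P)$, and projectivity of $P_M$ then splits the induced sequence $0\to K\to\iota(P)\to P_M\to 0$, yielding $P\cong P_M\oplus K$. Choosing $\sigma$ carefully so that its image together with $K$ gives the desired splitting of $Q$, a short diagram chase confirms that $\iota$ acts as $\iota_M$ on the $P_M$-summand and as the identity on $K$.

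Next I decompose $M\cong\bigoplus_jU_j\oplus\bigoplus_kP_{i_k}$ into indecomposables, separating non-projective summands $U_j$ from projective indecomposable summands $P_{i_k}$. Direct-summing the corresponding minimal projective resolutions splits $\iota_M$ into summands of type (i) (one $P_{U_j}\to Q_{U_j}$ per non-projective $U_j$) and type (ii) (since the minimal resolution of a projective indecomposable $P_{i_k}$ is $0\to P_{i_k}$). Decomposing $R=\bigoplus_iP_i^{n_i}$ produces type (iii) summands $P_i\xrightarrow{\mathrm{id}}P_i$. Indecomposability of types (ii) and (iii) follows from Lemma~\ref{l30}(i),(iii), which identifies their endomorphism rings with the local ring $\mathrm{End}_{k\mathcal{Q}}(P_i)$. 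For type (i), suppose $\iota_U=\alpha\oplus\beta$; then $\mathrm{Coker}\,\alpha\oplus\mathrm{Coker}\,\beta\cong U$ forces one cokernel to vanish, so the corresponding component is an isomorphism between projective summands, which exhibits a nonzero direct summand of $Q_U$ inside $\ker(Q_U\to U)$, contradicting the essentiality of the projective cover. Pairwise non-isomorphism of the three families follows by inspecting cokernels together with the underlying projectives.

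The principal obstacle is the structural decomposition in the second paragraph. Once that splitting is in hand, the remainder is a short sequence of routine verifications via Krull--Schmidt, the preceding lemmas, and the defining property of projective covers.
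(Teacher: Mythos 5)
Your proof is correct, and it is essentially the paper's approach made fully explicit. The key shared ingredient is the structural decomposition of an arbitrary monomorphism $\iota:P\hookrightarrow Q$ between projectives into a minimal projective resolution of $\mathrm{Coker}\,\iota$ plus an identity piece; the paper invokes this implicitly in the final sentence (``$P\stackrel{\iota}{\rightarrow}Q$ admits $P_U\stackrel{\iota_U}{\rightarrow}Q_U$ as a direct summand''), while you spell out the projective-cover argument that justifies it. Where the two proofs diverge is in strategy for the converse direction: the paper starts from an indecomposable $\iota$ and first shows its cokernel $U$ is indecomposable and non-projective by passing through Lemma \ref{l31} --- $\mathrm{End}(U)$ is a quotient of the local ring $\mathrm{End}(\iota)$, hence local --- before applying the decomposition once in the special case $M=U$. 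You instead decompose an arbitrary object directly as a direct sum of objects of types (i)--(iii) and then conclude by Krull--Schmidt, which makes Lemma \ref{l31} unnecessary but requires you to prove indecomposability of each type, including the slightly more involved argument for type (i) via essentiality. Both routes work; yours is more self-contained and constructive, the paper's is shorter because it leverages the locality of endomorphism rings. One small caveat: your phrase ``lift the projective cover $Q_M\to M$ through the surjection $Q\to M$ to obtain a split embedding'' compresses a step --- the standard argument lifts $Q\to M$ through the projective cover to a surjection $\tau:Q\to Q_M$ (surjective because the cover is essential), which then splits; the lift of the cover map alone is not automatically a split mono, so it's worth phrasing it that way.
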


\begin{proof} Indecomposability of the objects in (ii) and (iii) is clear from indecomposability of $P_i$. Indecomposability of the objects in (i) follows from minimality of the resolution. Conversely, assume that $P\stackrel{\iota}{\rightarrow}Q$ is an indecomposable object, not of the form in (ii) or (iii). Then $\iota$ is not an isomorphism, thus $U=Q/P\not=0$. Since ${\rm End}(P\stackrel{\iota}{\rightarrow}Q)$ is local, so is ${\rm End}(U)$ by Lemma \ref{l31}, and thus $U$ is indecomposable. It is also non-projective, since $\iota$ is non-split. But then $P\stackrel{\iota}{\rightarrow}Q$ admits $P_U\stackrel{\iota_U}{\rightarrow}Q_U$ as a direct summand, proving that they are isomorphic.
\end{proof}


\begin{definition} Let $\mathcal{H}_\mathcal{Q}$ be the full subcategory of ${\rm Hom}^{\rm mono}({\rm proj}\, k\mathcal{Q})$ of objects without direct summands of the form $0\rightarrow P$. Let $B_\mathcal{Q}$ be the algebra $B(\mathcal{H}_\mathcal{Q})$.
\end{definition}

Equivalently, $\mathcal{H}_\mathcal{Q}$ is the full subcategory of embeddings $P\subset Q$ whose image is not contained in a proper direct summand. Moreover, $\mathcal{H}_\mathcal{Q}$ is equivalent to the quotient category of ${\rm Hom}^{\rm mono}({\rm proj}\, k\mathcal{Q})$ by morphisms factoring through an object $0\rightarrow P$ by Lemma \ref{l30}. The cokernel functor thus induces an equivalence between $\mathcal{H}_\mathcal{Q}/{\rm Hom}^{\rm iso}({\rm proj}\, k\mathcal{Q})$ and $\underline{\bmod}\, k\mathcal{Q}$, the quotient category of $\bmod\, k\mathcal{Q}$ by ${\rm proj}\, k\mathcal{Q}$.



\section{The category $\bmod\,\mathcal{H}_\mathcal{Q}^{\rm op}$ and its homological properties}\label{section4}

Now we consider the category $\bmod\,\mathcal{H}_\mathcal{Q}^{\rm op}$, whose objects thus are contravariant functors from embeddings $P\subset Q$ without direct summands $0\subset R$ to vector spaces.\\[1ex]
Note again that the projective objects of this functor category are the objects of the form ${\rm Hom}(\_,(P\subset Q))$; more precisely, the projective indecomposable objects are the ${\rm Hom}(\_,(P_U\subset Q_U))$ for non-projective indecomposables $U$ in $\bmod\, k\mathcal{Q}$, and the ${\rm Hom}(\_,(P_i=P_i))$ for $i\in \mathcal{Q}_0$. Dually, the injective objects on $\bmod\,\mathcal{H}_\mathcal{Q}^{\rm op}$ are of the form ${\rm Hom}(P\subset Q,\_)^*$. \\[1ex]
The cokernel functor ${\rm Coker}:\mathcal{H}_\mathcal{Q}\rightarrow\bmod\, k\mathcal{Q}$ induces an exact functor $\bmod\,(\bmod\, k\mathcal{Q})^{\rm op}\rightarrow\bmod\,\mathcal{H}_\mathcal{Q}^{\rm op}$. We will now use this functor to construct projective resolutions in $\bmod\,\mathcal{H}_\mathcal{Q}^{\rm op}$ using those in $\bmod\,(\bmod\, k\mathcal{Q})^{\rm op}$ coming from the Auslander-Reiten theory. We first need a lemma relating homomorphism spaces in $\mathcal{H}_\mathcal{Q}$ and in $\bmod\, k\mathcal{Q}$ via the cokernel functor.

\begin{lem} For all $(P\subset Q)$ in $\mathcal{H}_\mathcal{Q}$, we have an exact sequence of functors
$$0\rightarrow{\rm Hom}(\_,(P=P))\rightarrow{\rm Hom}(\_,(P\subset Q))\rightarrow{\rm Hom}({\rm Coker}\,\_,Q/P)\rightarrow 0.$$
\end{lem}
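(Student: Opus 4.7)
The plan is to construct both nontrivial maps in the sequence explicitly as natural transformations, then verify exactness pointwise after evaluating at an arbitrary object $(P' \xrightarrow{\iota'} Q')$ of $\mathcal{H}_\mathcal{Q}$. The first map comes from contravariant Yoneda applied to the canonical morphism $(\mathrm{id}_P, \iota) \colon (P = P) \to (P \subset Q)$ of $\mathcal{H}_\mathcal{Q}$; using Lemma \ref{l30}(ii) to identify ${\rm Hom}((P' \to Q'),(P = P)) \simeq {\rm Hom}(Q', P)$, it sends $\psi \colon Q' \to P$ to the pair $(\psi\iota', \iota\psi) \colon (P' \to Q') \to (P \to Q)$. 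The second map is the natural transformation induced by applying ${\rm Coker}$ to morphisms, that is, $(\phi, \psi) \mapsto (\phi, \psi)_*$ where $(\phi, \psi)_* \colon {\rm Coker}\,\iota' \to Q/P$ is the map read off the commutative diagram preceding Lemma \ref{l31}.

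Exactness at the left is immediate from the injectivity of $\iota$: if $(\psi\iota', \iota\psi) = 0$ then $\iota\psi = 0$ forces $\psi = 0$. Exactness at the middle is essentially the content of Lemma \ref{l31}. By the discussion right after that lemma, a pair $(\phi,\psi)$ induces the zero map on cokernels precisely when $\psi$ factors as $\psi = \iota h$ for some $h \colon Q' \to P$; and then the compatibility relation $\iota\phi = \psi\iota' = \iota h \iota'$ together with the injectivity of $\iota$ forces $\phi = h\iota'$, so $(\phi,\psi)$ coincides with the image of $h$ under the first map.

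For exactness at the right (surjectivity), given $f \colon {\rm Coker}\,\iota' \to Q/P$, I would use the fact that $(P \xrightarrow{\iota} Q) \to Q/P \to 0$ is the start of a projective resolution: the projectivity of $Q'$ lets me lift $f \circ \pi' \colon Q' \to Q/P$ (where $\pi' \colon Q' \twoheadrightarrow {\rm Coker}\,\iota'$ is the quotient map) along $\pi \colon Q \twoheadrightarrow Q/P$ to obtain $\psi \colon Q' \to Q$. Since $\pi \psi \iota' = f \pi' \iota' = 0$, the image of $\psi\iota'$ lies in $\iota(P)$, and injectivity of $\iota$ yields a unique $\phi \colon P' \to P$ with $\iota\phi = \psi\iota'$. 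The pair $(\phi,\psi)$ is then a morphism in $\mathcal{H}_\mathcal{Q}$ whose induced map on cokernels is $f$ by construction. Naturality of both transformations in $(P' \to Q')$ is a routine diagram chase and follows from functoriality of ${\rm Coker}$ and of ${\rm Hom}$.

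I do not anticipate a substantive obstacle; the only mildly delicate point is using injectivity of $\iota$ consistently (both in checking injectivity of the first map and in extracting $\phi$ from $\psi$ in the surjectivity and middle-exactness arguments), which is why the statement is restricted to honest embeddings $P \subset Q$.
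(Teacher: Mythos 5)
Your proof is correct, but the route differs from the paper's in a way worth noting. The paper evaluates the sequence on $(R\subset S)$ and then assembles a $3\times 3$ commutative diagram built from the exact sequences $0\to P\to Q\to Q/P\to 0$ and $0\to R\to S\to S/R\to 0$ under the various $\mathrm{Hom}$ functors; a diagram chase identifies the middle evaluated term with the fibre product $Y\subset\mathrm{Hom}(R,P)\oplus\mathrm{Hom}(S,Q)$ and extracts the exact sequence in one stroke. You instead construct the two maps explicitly (agreeing with the paper's) and verify exactness by hand: injectivity of $\iota$ for left exactness, the factorization criterion from the proof of Lemma~\ref{l31} for middle exactness, and projectivity of $Q'$ to lift $f\pi'$ along $\pi$ for surjectivity. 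Your argument is more elementary and self-contained (no $3\times3$ lemma), and it makes transparent exactly where projectivity of the middle term of the embedding and injectivity of $\iota$ are used. The paper's argument is more structural and compresses the bookkeeping of compatibility conditions into the diagram, which is convenient when several such sequences must be assembled at once (as in the subsequent proof of Theorem~\ref{gldim2}). Both prove the same statement; the choice is one of presentation. One small remark: in the middle-exactness step it is worth saying explicitly that the composite of the two nonzero maps is zero (i.e.\ the sequence is a complex) before proving the inclusion of kernel in image; you do effectively cover this since $\iota h$ factors through $P$ so the induced cokernel map vanishes, but stating it separately would make the argument cleaner.
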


\begin{proof} The first map in the claimed sequence is induced by the canonical map $(P=P)\rightarrow(P\subset Q)$ in $\mathcal{H}_\mathcal{Q}$, whereas the second map is induced by applying the cokernel functor to both arguments of ${\rm Hom}$. We prove exactness of the sequence by evaluating on an arbitrary object $(R\subset S)$ of $\mathcal{H}_\mathcal{Q}$.  We have an induced commutative diagram with exact rows and columns
$$\begin{array}{ccccccccc}
&&0&&0&&0&&\\
&&\downarrow&&\downarrow&&\downarrow&&\\
0&\rightarrow&{\rm Hom}(S/R,P)&\rightarrow&{\rm Hom}(S/R,Q)&\rightarrow&{\rm Hom}(S/R,Q/P)&&\\
&&\downarrow&&\downarrow&&\downarrow&&\\
0&\rightarrow&{\rm Hom}(S,P)&\rightarrow&{\rm Hom}(S,Q)&\rightarrow&{\rm Hom}(S,Q/P)&\rightarrow&0\\
&&\downarrow&&\downarrow&&\downarrow&&\\
0&\rightarrow&{\rm Hom}(R,P)&\rightarrow&{\rm Hom}(R,Q)&\rightarrow&{\rm Hom}(R,Q/P)&\rightarrow&0
\end{array}$$
A standard diagram chase yields an exact sequence
$$0\rightarrow{\rm Hom}(S,P)\rightarrow Y\rightarrow{\rm Hom}(S/R,Q/P)\rightarrow 0,$$
where $Y$ denotes the subspace of ${\rm Hom}(R,P)\oplus{\rm Hom}(S,Q)$ of pairs mapping to the same element of ${\rm Hom}(R,Q)$. But this sequence immediately identifies with the evaluation of the claimed exact sequence at $(R\subset S)$.
\end{proof}

\begin{thm}\label{gldim2} The category $\bmod\,\mathcal{H}_\mathcal{Q}^{\rm op}$ has global dimension at most two.
\end{thm}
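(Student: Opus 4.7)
The strategy is to verify that every simple object $S_X$ in $\bmod\,\mathcal{H}_\mathcal{Q}^{\rm op}$ admits a projective resolution of length at most two. By Yoneda's lemma and Proposition~\ref{indecinhomprojkq}, these simples are indexed by the indecomposables of $\mathcal{H}_\mathcal{Q}$, which fall into two families: $S_{(P_U \subset Q_U)}$ for $U$ a non-projective indecomposable of $\bmod\,k\mathcal{Q}$, and $S_{(P_i = P_i)}$ for vertices $i$. The central input is the preceding lemma: for each $(P \subset Q) \in \mathcal{H}_\mathcal{Q}$, the functor $\mathrm{Hom}(\mathrm{Coker}\,\_, Q/P)$ is realized as the cokernel of a map between two indecomposable projectives of $\bmod\,\mathcal{H}_\mathcal{Q}^{\rm op}$, so $\mathrm{Hom}(\mathrm{Coker}\,\_, M)$ has projective dimension at most $1$ for every $M \in \bmod\,k\mathcal{Q}$ whose indecomposable summands are all non-projective (with a separate, analogous argument for projective summands).

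For simples $S_{(P_U \subset Q_U)}$ of the first family, I would start from the Auslander--Reiten projective resolution
\[
0 \to \mathrm{Hom}(\_, \tau U) \to \mathrm{Hom}(\_, B) \to \mathrm{Hom}(\_, U) \to S_U \to 0
\]
of $S_U$ in $\bmod\,(\bmod\,k\mathcal{Q})^{\rm op}$, coming from the AR sequence $0 \to \tau U \to B \to U \to 0$, and pull it back along the exact functor $\bmod\,(\bmod\,k\mathcal{Q})^{\rm op}\to\bmod\,\mathcal{H}_\mathcal{Q}^{\rm op}$ induced by $\mathrm{Coker}$. Since precomposition with $\mathrm{Coker}$ sends $S_U$ to $S_{(P_U \subset Q_U)}$ (verified pointwise on the indecomposables classified in Proposition~\ref{indecinhomprojkq}), this yields the four-term exact sequence
\[
0 \to \mathrm{Hom}(\mathrm{Coker}\,\_, \tau U) \to \mathrm{Hom}(\mathrm{Coker}\,\_, B) \to \mathrm{Hom}(\mathrm{Coker}\,\_, U) \to S_{(P_U \subset Q_U)} \to 0,
\]
whose three non-simple terms each have projective dimension at most one by the preceding paragraph. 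The plan is then to splice this with three instances of the lemma (one for each of $\tau U$, $B$, $U$), arranging the data as the columns of a $3 \times 3$ commutative diagram whose top two rows come from the horseshoe-lemma lift of the AR sequence to the minimal projective resolutions in $\mathrm{Hom}^{\rm mono}(\mathrm{proj}\,k\mathcal{Q})$, and extract a length-two projective resolution of $S_{(P_U \subset Q_U)}$ by a diagram chase.

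Simples $S_{(P_i = P_i)}$ of the second family are handled via Lemma~\ref{l30}(ii), which identifies $\mathrm{Hom}(\_, (P_i = P_i))$ with the pullback of the representable $\mathrm{Hom}(\_, P_i) \in \bmod\,(\mathrm{proj}\,k\mathcal{Q})^{\rm op}$ along the exact second-projection functor $\mathcal{H}_\mathcal{Q} \to \mathrm{proj}\,k\mathcal{Q}$, $(P\subset Q)\mapsto Q$; combined with the hereditariness of $k\mathcal{Q}$ and a direct analysis of the radical of the projective cover, this gives a projective resolution of length at most two. The main obstacle I anticipate lies in the diagram-chase collapse in the first case: the naive splicing of the pulled-back AR sequence with the three short exact sequences from the lemma only yields $\operatorname{pd} \leq 3$, and extracting the sharp bound requires carefully matching and cancelling the identity summands $(P_j = P_j)$ that appear in different columns of the diagram. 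A secondary technical point is the treatment of those AR sequences in which $B$ or $\tau U$ has projective direct summands, since their minimal projective resolutions contain summands $(0 \subset P)$ lying outside $\mathcal{H}_\mathcal{Q}$ and must be handled by a separate but parallel argument.
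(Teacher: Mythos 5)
Your proposal follows the paper's strategy precisely: the preceding lemma, the $\mathrm{Coker}$--pullback of the Auslander--Reiten projective resolution of $S_U$, and the $3\times 3$ diagram are all deployed in the same roles. The obstacle you anticipate in your last paragraph does not in fact arise: since the minimal projective resolutions of $U$ and $\tau U$ combine (horseshoe lemma) into a projective resolution of $B$ with $P_B=P_U\oplus P_{\tau U}$ and $Q_B=Q_U\oplus Q_{\tau U}$, the top row $0\to\mathrm{Hom}(\_,(P_{\tau U}=P_{\tau U}))\to\mathrm{Hom}(\_,(P_B=P_B))\to\mathrm{Hom}(\_,(P_U=P_U))\to 0$ is split exact, and a double application of the $3\times 3$-lemma then gives exactness of the middle row $0\to\mathrm{Hom}(\_,(P_{\tau U}\subset Q_{\tau U}))\to\mathrm{Hom}(\_,(P_B\subset Q_B))\to\mathrm{Hom}(\_,(P_U\subset Q_U))\to S_{P_U\subset Q_U}\to 0$ directly --- this is itself the desired length-two projective resolution, with no splicing or cancellation step. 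The case $\tau U$ projective is dealt with by observing that $\mathrm{Hom}(\_,(0\subset\tau U))$ vanishes on $\mathcal{H}_\mathcal{Q}$, which only shortens the middle row further; and for the second family the paper obtains the sharper length-one resolution $0\to\mathrm{Hom}(\_,(\bigoplus_{i\to j}P_j\subset P_i))\to\mathrm{Hom}(\_,(P_i=P_i))\to S_{P_i=P_i}\to 0$ from the facts that $\bigoplus_{i\to j}P_j\simeq\mathrm{rad}\,P_i$ and that the inclusion $\mathrm{rad}\,P_i\subset P_i$ is almost split, which is cleaner than the restriction-functor argument you sketch.
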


\begin{proof} We exhibit projective resolutions of the simple objects in $\bmod\,\mathcal{H}_\mathcal{Q}^{\rm op}$. These simple objects are the $S_{P_U\subset Q_U}$ for $U$ a non-projective indecomposable, and the $S_{P_i=P_i}$ for $i\in \mathcal{Q}_0$.\\[1ex]
Recall the projective resolution
$$0\rightarrow{\rm Hom}(\_,\tau U)\rightarrow{\rm Hom}(\_,B)\rightarrow{\rm Hom}(\_,U)\rightarrow S_U\rightarrow 0$$
 in $\bmod\,(\bmod\, A)^{\rm op}$, where $0\rightarrow{\tau U}\rightarrow B\rightarrow U\rightarrow 0$ is the Auslander-Reiten sequence ending in $U$. The above minimal projective resolutions of $U$ and $\tau U$ induce a (not necessarily minimal) projective resolution
$$0\rightarrow \underbrace{P_U\oplus P_{\tau U}}_{=P_B}\rightarrow \underbrace{Q_U\oplus Q_{\tau U}}_{=Q_B}\rightarrow B\rightarrow 0$$
of the middle term $B$. Together with the exact sequences of the previous lemma, we can thus consider the following commutative diagram (in which ${\rm Hom}(X,Y)$ is abbreviated to $(X,Y)$):
$$
\xymatrix@C=6pt{
       & 0\ar[d]                                &0\ar[d]                   &0\ar[d]                   &\\
0\ar[r]&(\_,(P_{\tau U}=P_{\tau U}))\ar[r]\ar[d]&(\_,(P_B=P_B))\ar[r]\ar[d]&(\_,(P_U=P_U))\ar[r]\ar[d]&0\ar[d]\\
0\ar[r]&(\_,(P_{\tau U}\subset Q_{\tau U}))\ar[r]\ar[d]&(\_,(P_B\subset Q_B))\ar[r]\ar[d]&(\_,(P_U\subset Q_U))\ar[r]\ar[d]&S_{P_U\subset Q_U}\ar[r]\ar[d]&0\\
0\ar[r]&({\rm Coker}\_,\tau U)\ar[r]\ar[d]&({\rm Coker}\_,B)\ar[r]\ar[d]&({\rm Coker}\_,U)\ar[r]\ar[d]&S_U({\rm Coker}\_)\ar[r]\ar[d]&0\\
&0&0&0&0&
}
$$
All columns being exact and the top and bottom row being exact, a double application of the $3\times 3$-lemma yields exactness of the middle row. This sequence provides the desired projective resolution of $S_{P_U\subset Q_U}$ as long as $\tau U$ is non-projective. In case it is, we note that the restriction of the functor ${\rm Hom}(\_,(0\subset \tau U))$ to $\mathcal{H}_\mathcal{Q}$ is zero, thus the middle row provides an even shorter projective resolution. It remains to exhibit a projective resolution of $S_{P_i=P_i}$, which is provided by
$$0\rightarrow{\rm Hom}(\_,(\bigoplus_{i\rightarrow j}P_j\subset P_i))\rightarrow{\rm Hom}(\_,(P_i=P_i))\rightarrow S_{P_i=P_i}\rightarrow 0.$$
This can be verified by evaluating on indecomposable objects, using that $\bigoplus_{i\rightarrow j}P_j\simeq{\rm rad}\, P_i$, and that the inclusion ${\rm rad} P_i\subset P_i$ is almost split.
The theorem is proved.
\end{proof}

\section{The functor $\Lambda$}\label{section5}

We have an embedding ${\rm proj}\, k\mathcal{Q}\rightarrow \mathcal{H}_\mathcal{Q}$ associating to $P$ the object $(P=P)$. This induces a restriction functor
${\rm res}:\bmod\,\mathcal{H}_\mathcal{Q}^{\rm op}\rightarrow\bmod\,({\rm proj}\, k\mathcal{Q})^{\rm op}\simeq\bmod\, k\mathcal{Q}$.\\[1ex]
Central to the following is the definition of a functor
$\Lambda:\bmod\, k\mathcal{Q}\rightarrow\bmod\,\mathcal{H}_\mathcal{Q}^{\rm op}$ (see Section \ref{section6} for concrete examples):

\begin{definition} For $M$  in $\bmod\, k\mathcal{Q}$, define an object $\widehat{M}$ in
$\bmod\,\mathcal{H}_\mathcal{Q}^{\rm op}$ as follows: $$\widehat{M}(\iota:P\rightarrow Q)={\rm Im}({\rm Hom}(\iota,M):{\rm Hom}(Q,M)\rightarrow{\rm Hom}(P,M)),$$
with the natural definition on morphisms. This defines a functor $\Lambda:\bmod\, k\mathcal{Q}\rightarrow\bmod\,\mathcal{H}_\mathcal{Q}^{\rm op}$.
\end{definition}

\begin{lem} We have ${\rm res}\, \widehat{M}\simeq M$ naturally.
\end{lem}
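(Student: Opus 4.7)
The plan is to evaluate $\widehat{M}$ on objects in the image of the embedding $\mathrm{proj}\,k\mathcal{Q}\hookrightarrow\mathcal{H}_\mathcal{Q}$, namely on identity maps $(P=P)$, and then invoke the standard equivalence $\bmod\,(\mathrm{proj}\, k\mathcal{Q})^{\mathrm{op}}\simeq\bmod\,k\mathcal{Q}$ recalled in Section~\ref{section2}.

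Unwinding the definition of $\widehat{M}$, for any projective $P$ we have
$$\widehat{M}(P\stackrel{\mathrm{id}}{=}P)=\mathrm{Im}(\mathrm{Hom}(\mathrm{id}_P,M)\colon\mathrm{Hom}(P,M)\to\mathrm{Hom}(P,M))=\mathrm{Hom}(P,M),$$
since the map induced by $\mathrm{id}_P$ is the identity. Thus $\mathrm{res}\,\widehat{M}$, viewed as a contravariant functor from $\mathrm{proj}\, k\mathcal{Q}$ to $\bmod\,k$, sends $P$ to $\mathrm{Hom}(P,M)$. Next I would check that this identification is functorial in $P$: a morphism $\varphi\colon P\to P'$ in $\mathrm{proj}\,k\mathcal{Q}$ yields the morphism $(\varphi,\varphi)\colon(P=P)\to(P'=P')$ in $\mathcal{H}_\mathcal{Q}$, and the definition of $\widehat{M}$ on morphisms (restriction of $\mathrm{Hom}(\_,M)$ along the vertical components) makes this act as precomposition with $\varphi$, which is exactly the functorial structure of $\mathrm{Hom}(\_,M)$.

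Finally, under the equivalence $\bmod\,(\mathrm{proj}\, k\mathcal{Q})^{\mathrm{op}}\simeq\bmod\,k\mathcal{Q}$ sending $N\mapsto\mathrm{Hom}(\_,N)$, the functor $\mathrm{Hom}(\_,M)|_{\mathrm{proj}\,k\mathcal{Q}}$ corresponds to $M$ itself, yielding $\mathrm{res}\,\widehat{M}\simeq M$. Naturality in $M$ is immediate, since the definition of $\Lambda$ on a morphism $f\colon M\to M'$ is postcomposition with $f$, which intertwines the above identifications. The proof is essentially bookkeeping; there is no substantive obstacle, and the whole content is the observation that specializing the defining image to an identity map returns the full Hom space.
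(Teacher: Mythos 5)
Your proof is correct and follows exactly the paper's argument: evaluate $\widehat{M}$ on $(P=P)$ to recover $\mathrm{Hom}(P,M)$ and then invoke the equivalence $\bmod\,(\mathrm{proj}\,k\mathcal{Q})^{\mathrm{op}}\simeq\bmod\,k\mathcal{Q}$. You spell out the functoriality and naturality checks a bit more explicitly than the paper does, but that is only bookkeeping the paper leaves implicit.
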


\begin{proof} We have $$ ({\rm res}\, \widehat{M})(P)=\widehat{M}(P=P)=
{\rm Im}({\rm Hom}(P,M)\stackrel{{\rm id}}{\rightarrow}{\rm Hom}(P,M))={\rm Hom}(P,M);$$
using the equivalence between $\bmod\, k\mathcal{Q}$ and $\bmod\,({\rm proj}\, k\mathcal{Q})^{\rm op}$, the statement follows.
\end{proof}

We note the following weak adjunction properties:
\begin{lem}\label{lemweak} For all $M$ in $\bmod\, k\mathcal{Q}$ and all $F$ in $\bmod\,\mathcal{H}_\mathcal{Q}^{\rm op}$, the natural maps
$${\rm Hom}(\widehat{M},F)\rightarrow{\rm Hom}({\rm res}\,\widehat{M},{\rm res}\,F)\simeq{\rm Hom}(M,{\rm res}\, F)$$
and
$${\rm Hom}(F,\widehat{M})\rightarrow{\rm Hom}({\rm res}\,F,{\rm res}\,\widehat{M})\simeq{\rm Hom}({\rm res}\, F,M)$$
are injective.
\end{lem}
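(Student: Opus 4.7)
The plan is to exploit naturality of a homomorphism $\eta$ between functors with respect to two canonical morphisms in $\mathcal{H}_\mathcal{Q}$: namely, for any object $(\iota\colon P\subset Q)$ in $\mathcal{H}_\mathcal{Q}$, the pairs
$$(\mathrm{id},\iota)\colon (P=P)\longrightarrow (P\subset Q)\quad\text{and}\quad (\iota,\mathrm{id})\colon (P\subset Q)\longrightarrow (Q=Q)$$
are morphisms in $\mathcal{H}_\mathcal{Q}$. The restriction functor $\mathrm{res}$ picks out the values of a functor at the identity objects $(P=P)$, so we must show that knowledge of $\eta$ on these special objects already determines $\eta$ on all of $\mathcal{H}_\mathcal{Q}$, when either the source or the target is $\widehat{M}$.

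The key observation is to compute the transition maps that $\widehat{M}$ assigns to the two canonical morphisms above. By the definition of $\widehat{M}$, unwinding the contravariant action shows that the morphism $(\iota,\mathrm{id})$ induces the surjection
$$\mathrm{Hom}(Q,M)=\widehat{M}(Q=Q)\twoheadrightarrow\widehat{M}(P\subset Q)=\mathrm{Im}\bigl(\mathrm{Hom}(Q,M)\xrightarrow{-\circ\iota}\mathrm{Hom}(P,M)\bigr),$$
that is, precomposition with $\iota$, which is surjective onto $\widehat{M}(\iota)$ tautologically. Dually, $(\mathrm{id},\iota)$ induces the inclusion
$$\widehat{M}(P\subset Q)\hookrightarrow\widehat{M}(P=P)=\mathrm{Hom}(P,M),$$
given by the tautological embedding of the image. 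Thus $\widehat{M}(\iota)$ is sandwiched between the two values $\widehat{M}(Q=Q)$ and $\widehat{M}(P=P)$ via a surjection and an injection induced by morphisms in $\mathcal{H}_\mathcal{Q}$.

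For the first statement, assume $\eta\colon \widehat{M}\to F$ satisfies $\mathrm{res}\,\eta=0$, i.e.\ $\eta_{(P=P)}=0$ for every projective $P$. Applying the naturality square of $\eta$ to the morphism $(\iota,\mathrm{id})\colon (P\subset Q)\to (Q=Q)$ yields the commutative diagram
$$\xymatrix@C=20pt{\widehat{M}(Q=Q)\ar@{->>}[d]\ar[r]^{\eta_{(Q=Q)}=0}&F(Q=Q)\ar[d]\\ \widehat{M}(\iota)\ar[r]^{\eta_\iota}&F(\iota)}$$
whose top row is zero; since the left vertical map is surjective, $\eta_\iota=0$. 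For the second statement, assume $\eta\colon F\to\widehat{M}$ satisfies $\mathrm{res}\,\eta=0$, and apply naturality to $(\mathrm{id},\iota)\colon (P=P)\to (P\subset Q)$: the analogous square has the inclusion $\widehat{M}(\iota)\hookrightarrow\widehat{M}(P=P)$ as its right vertical map and $\eta_{(P=P)}=0$ as its bottom row, so injectivity of the right vertical forces $\eta_\iota=0$.

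There is no real obstacle here: the argument is a direct naturality diagram chase, and the only thing to verify carefully is the computation of the maps $\widehat{M}(\iota,\mathrm{id})$ and $\widehat{M}(\mathrm{id},\iota)$, which follows at once from the definition of $\widehat{M}$ as the image functor.
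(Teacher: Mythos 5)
Your proof is correct and takes essentially the same approach as the paper: both exploit the chain $(P=P)\rightarrow(P\subset Q)\rightarrow(Q=Q)$ in $\mathcal{H}_\mathcal{Q}$, the observation that $\widehat{M}$ sends the first morphism to an injection and the second to a surjection, and then a naturality diagram chase. The paper combines the two naturality squares into one $2\times 3$ diagram and proves the second statement ``dually,'' whereas you spell out the two squares separately; this is only a presentational difference.
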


\begin{proof} For every object $(P\subset Q)$ in $\mathcal{H}_\mathcal{Q}$, we have a natural chain of morphisms
$$(P=P)\rightarrow(P\subset Q)\rightarrow (Q=Q)$$
in $\mathcal{H}_\mathcal{Q}$. Applying the functor $\widehat{M}$, this induces a chain
$${\rm Hom}(Q,M)\rightarrow{\rm Im}({\rm Hom}(Q,M)\rightarrow{\rm Hom}(P,M))\rightarrow{\rm Hom}(P,M),$$
so that the first map is surjective, and the second map is injective. Suppose that $\varphi:\widehat{M}\rightarrow F$ maps to $0$ under ${\rm res}$. This yields a commutative diagram
$$\begin{array}{rcccl}
{\rm Hom}(Q,M)&\rightarrow&{\rm Im}({\rm Hom}(Q,M)\rightarrow{\rm Hom}(P,M))&\rightarrow&{\rm Hom}(P,M)\\
0\downarrow&&\downarrow&&\downarrow 0\\
F(Q=Q)&\rightarrow&F(P\subset Q)&\rightarrow&F(P=P).
\end{array}$$
The outer vertical maps being zero and the first map in the upper row being surjective, we see that the middle vertical map is zero. This being true for an arbitrary embedding $P\subset Q$, the map $\varphi$ is already zero. The second statement is proved dually.
\end{proof}


\begin{cor} The functor $\Lambda$ is fully faithful.
\end{cor}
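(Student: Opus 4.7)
The plan is to show that the natural map $\Lambda_{M,N}\colon \mathrm{Hom}(M,N)\to \mathrm{Hom}(\widehat M,\widehat N)$ is a bijection by exhibiting a one-sided inverse coming from restriction, and then promoting it to a two-sided inverse via the preceding Lemma~\ref{lemweak}. The entire argument relies on two ingredients already established: the identification $\mathrm{res}\,\widehat M\simeq M$ from the first lemma of this section, and the injectivity of the restriction map on $\mathrm{Hom}(\widehat M,F)$ from Lemma~\ref{lemweak}.

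First, using the natural isomorphism $\mathrm{res}\,\widehat M\simeq M$, the restriction functor induces a map
$$\mathrm{res}_{M,N}\colon \mathrm{Hom}(\widehat M,\widehat N)\longrightarrow \mathrm{Hom}(\mathrm{res}\,\widehat M,\mathrm{res}\,\widehat N)\simeq \mathrm{Hom}(M,N).$$
By naturality, the composite $\mathrm{res}_{M,N}\circ\Lambda_{M,N}$ equals the identity on $\mathrm{Hom}(M,N)$: a morphism $f\colon M\to N$ is sent by $\Lambda$ to the induced map $\widehat f$ on images of $\mathrm{Hom}(\iota,M)$, and evaluating this natural transformation at objects of the form $(P=P)$ recovers $\mathrm{Hom}(P,f)$, which under the Yoneda identification is $f$ itself.

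Next, I would apply Lemma~\ref{lemweak} with the choice $F=\widehat N$: its first injectivity statement yields that $\mathrm{res}_{M,N}$ is injective. Combined with the fact that $\mathrm{res}_{M,N}\circ\Lambda_{M,N}=\mathrm{id}$ is in particular surjective, this forces $\mathrm{res}_{M,N}$ to be surjective as well, hence bijective. Consequently $\Lambda_{M,N}$ is the two-sided inverse of $\mathrm{res}_{M,N}$, and thus itself a bijection. Since $M,N$ were arbitrary, $\Lambda$ is fully faithful.

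There is no real obstacle here; all the substantive content has been packed into Lemma~\ref{lemweak} and the naturality statement $\mathrm{res}\,\widehat M\simeq M$. The only point requiring a bit of attention is the direction of the map in Lemma~\ref{lemweak}: one must use the injectivity of $\mathrm{Hom}(\widehat M,F)\to \mathrm{Hom}(M,\mathrm{res}\,F)$ applied to $F=\widehat N$, rather than the dual statement. Fullness and faithfulness then drop out simultaneously from the formal fact that a left inverse to an injection must be a bijection.
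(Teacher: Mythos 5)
Your proof is correct and follows essentially the same line as the paper's: both verify that $\mathrm{res}\circ\Lambda=\mathrm{id}$ on $\mathrm{Hom}(M,N)$ and then invoke the injectivity of the restriction map from Lemma~\ref{lemweak} (with $F=\widehat N$) to upgrade this to bijectivity of $\Lambda$. The only cosmetic difference is that you spell out the bookkeeping of why a left inverse to an injection is forced to be bijective, while the paper leaves it implicit.
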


\begin{proof}
For all $M$ and $N$ in $\bmod\, k\mathcal{Q}$, we have a chain of maps
$${\rm Hom}(M,N)\stackrel{\Lambda}{\rightarrow}{\rm Hom}(\widehat{M},\widehat{N})\stackrel{{\rm res}}{\rightarrow}{\rm Hom}({\rm res}\, \widehat{M},{\rm res}\, \widehat{N})\simeq{\rm Hom}(M,N)$$
whose composition is the identity, thus the first map is injective. The second map being injective by the previous lemma, the claim follows.
\end{proof}

\begin{rem} The functor $\Lambda$ is neither left nor right exact in general; from it being fully faithful we can at least conclude that injective and surjective maps are preserved.
\end{rem}

Now we come to the central result on the functor $\Lambda$:

\begin{thm}\label{ext2ext1} The following holds for all $M$ in $\bmod\, k\mathcal{Q}$:
\begin{enumerate}
\item\label{a)} Both the projective and the injective dimension of $\widehat{M}$ are at most one.
\item\label{b)} We have ${\rm Ext}^1(\widehat{M},\widehat{M})=0$.
\end{enumerate}
\end{thm}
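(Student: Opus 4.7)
\noindent The plan is to construct an explicit projective resolution of $\widehat{M}$ of length at most one; both (b) and the projective dimension half of (a) will follow directly from it, while the injective dimension half reduces to an $\mathrm{Ext}^2$-vanishing verification on simples.

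\medskip

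\noindent\textbf{Step 1: projective resolution.} Let $0 \to P^1 \xrightarrow{\iota} P^0 \xrightarrow{\pi} M \to 0$ be a minimal projective resolution in $\bmod\, k\mathcal{Q}$. I will establish the exact sequence
\[
0 \to \mathrm{Hom}(-, (P^1 \subset P^0)) \longrightarrow \mathrm{Hom}(-, (P^0 = P^0)) \longrightarrow \widehat{M} \to 0,
\]
where the right-hand map sends $\psi \in \mathrm{Hom}((R \subset S), (P^0 = P^0)) = \mathrm{Hom}(S, P^0)$ to $(\pi\psi)|_R \in \widehat{M}(R \subset S)$. Surjectivity holds because any element of $\widehat{M}(R \subset S)$ lifts along $\pi$ by projectivity of $S$. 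The kernel consists of those $\psi$ for which $\psi\, \iota_{RS}$ factors through $\iota$; monicity of $\iota$ identifies the kernel with $\mathrm{Hom}(-, (P^1 \subset P^0))$. For $M$ without projective summands, $(P^1 \subset P^0)$ lies in $\mathcal{H}_\mathcal{Q}$, both left-hand terms are projective, and $\mathrm{pd}(\widehat{M}) \leq 1$.

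\medskip

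\noindent\textbf{Step 2: the projective case.} For $M = P$ projective, the same construction produces a surjection $\mathrm{Hom}(-, (P = P)) \twoheadrightarrow \widehat{P}$ with kernel $K(R \subset S) = \mathrm{Hom}(S/R, P)$. By Proposition \ref{indecinhomprojkq}, every indecomposable of $\mathcal{H}_\mathcal{Q}$ has cokernel either zero or a non-projective indecomposable, so $S/R$ has no projective summand. In a hereditary algebra $\mathrm{Hom}(U, P) = 0$ for any non-projective indecomposable $U$ (its image would be a projective submodule of $P$, making $U$ split and so projective). Thus $K = 0$ and $\widehat{P}$ is projective. Splitting off the projective part of an arbitrary $M$ completes the proof that $\mathrm{pd}(\widehat{M}) \leq 1$.

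\medskip

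\noindent\textbf{Step 3: assertion (b).} Applying $\mathrm{Hom}(-, \widehat{M})$ to the resolution of Step 1 and using Yoneda, $\mathrm{Ext}^1(\widehat{M}, \widehat{M})$ becomes the cokernel of the canonical restriction
\[
\widehat{M}(P^0 = P^0) = \mathrm{Hom}(P^0, M) \longrightarrow \widehat{M}(P^1 \subset P^0) = \mathrm{Im}\bigl(\mathrm{Hom}(P^0, M) \to \mathrm{Hom}(P^1, M)\bigr),
\]
which is surjective by the very definition of the image. Hence $\mathrm{Ext}^1(\widehat{M}, \widehat{M}) = 0$; the projective summand case is immediate since $\widehat{P}$ is projective.

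\medskip

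\noindent\textbf{Step 4: injective dimension.} Since $\mathrm{gldim}(\bmod\, \mathcal{H}_\mathcal{Q}^{\mathrm{op}}) \leq 2$ by Theorem \ref{gldim2}, it suffices to verify $\mathrm{Ext}^2(S, \widehat{M}) = 0$ on every simple $S$. This is automatic for $S_{P_i = P_i}$ and for $S_{P_U \subset Q_U}$ when $\tau U$ is projective, because the projective resolutions from Theorem \ref{gldim2} then have length at most one. In the remaining case I invoke the length-two resolution arising from the AR sequence $0 \to \tau U \to B \to U \to 0$. Using the identification $\widehat{M}(P_V \subset Q_V) = \mathrm{Hom}(Q_V, M) / \mathrm{Hom}(V, M)$, I need the map $\widehat{M}(P_B \subset Q_B) \to \widehat{M}(P_{\tau U} \subset Q_{\tau U})$ to be surjective. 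The horseshoe lemma gives a split short exact sequence $0 \to Q_{\tau U} \to Q_B \to Q_U \to 0$ (indeed $Q_B = Q_U \oplus Q_{\tau U}$), so $\mathrm{Hom}(Q_B, M) \to \mathrm{Hom}(Q_{\tau U}, M)$ is a split surjection; its compatibility with the subspaces $\mathrm{Hom}(B, M)$ and $\mathrm{Hom}(\tau U, M)$ makes it descend to a surjection on the quotients, forcing $\mathrm{Ext}^2 = 0$.

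\medskip

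\noindent The main obstacle is Step 1: identifying the kernel of the natural surjection as a representable functor requires careful use of the monicity of $\iota$, and the awkward projective summand case must be handled separately via the hereditary property of $k\mathcal{Q}$. Once that foundation is in place, (b) is essentially tautological and the injective dimension bound is routine homological bookkeeping with the resolutions of Theorem \ref{gldim2}.
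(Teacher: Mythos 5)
Your proof is correct, and Steps 1--3 essentially reproduce the paper's argument (the paper packages the exactness check of Step 1 as an inspection of a $2\times 3$ commutative diagram with exact rows, and deals with the projective-summand case silently by observing that $\mathrm{Hom}(\_,(0\subset R))$ vanishes on $\mathcal{H}_\mathcal{Q}$; you make both points explicit, which is harmless).

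Step 4 takes a genuinely different route. The paper constructs an explicit injective coresolution
$0\rightarrow\widehat{M}\rightarrow{\rm Hom}((\nu^-I=\nu^-I),\_)^*\rightarrow{\rm Hom}((\nu^-I\subset\nu^-J),\_)^*\rightarrow 0$
by applying the inverse Nakayama functor $\nu^-$ to an injective coresolution $0\to M\to I\to J\to 0$ (using that $\nu^-M=0$ when $M$ has no injective summand, which relies on heredity of $k\mathcal{Q}$). You instead note that $\mathrm{gldim}\le 2$ (Theorem \ref{gldim2}) reduces $\mathrm{id}(\widehat{M})\le 1$ to the vanishing of $\mathrm{Ext}^2(S,\widehat{M})$ for simple $S$, and check this on the explicit length-two resolutions of the simples: the relevant map $\widehat{M}(P_B\subset Q_B)\to\widehat{M}(P_{\tau U}\subset Q_{\tau U})$ is surjective because $Q_{\tau U}\hookrightarrow Q_B=Q_{\tau U}\oplus Q_U$ is a split inclusion, so $\mathrm{Hom}(Q_B,M)\twoheadrightarrow\mathrm{Hom}(Q_{\tau U},M)$ descends to a surjection of the quotients $\mathrm{Hom}(Q_V,M)/\mathrm{Hom}(V,M)$. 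This avoids the Nakayama functor and is arguably the more elementary computation, at the cost of not producing the explicit injective coresolution (which the paper does not reuse for Theorem \ref{ext2ext1} itself, so nothing is lost here; the paper does however reuse the Nakayama-based construction later when computing the quiver of $B_\mathcal{Q}$). Both approaches are sound.
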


\begin{proof}


We claim that if $0\rightarrow P\stackrel{\iota}{\rightarrow} Q\rightarrow M\rightarrow 0$ is a projective resolution of $M$, we have a projective resolution
$$0\rightarrow{\rm Hom}(\_,(P\stackrel{\iota}{\rightarrow} Q))\rightarrow {\rm Hom}(\_,(Q=Q))\rightarrow \widehat{M}\rightarrow 0$$
of $\widehat{M}$. First note that $\widehat{Q}\simeq{\rm Hom}(\_,(Q=Q))$ for $Q$ projective, thus the projection $Q\rightarrow M$ induces a projection $\widehat{Q}\rightarrow\widehat{M}$, thus it suffices to verify that ${\rm Hom}(\_,(P\subset Q))$ is indeed the kernel. Evaluating the above sequence on an embedding $R\subset S$, we get the sequence
$$
\xymatrix@C=6pt{
0\ar[r]&{\rm Hom}((R\subset S),(P\subset Q))\ar[r]&{\rm Hom}(S,Q)\ar[r]&{\rm Im}({\rm Hom}(S,M)\rightarrow{\rm Hom}(R,M))\ar[r]& 0
}
$$
whose exactness follows from the inspection of the following diagram, noting that ${\rm Hom}((R\subset S),(P\subset Q))$ equals the space of pairs in ${\rm Hom}(R,P)\oplus{\rm Hom}(S,Q)$ mapping to the same element of ${\rm Hom}(R,Q)$:
$$\begin{array}{ccccccccc}
0&\rightarrow{\rm Hom}(S,P)&\rightarrow&{\rm Hom}(S,Q)&\rightarrow&{\rm Hom}(S,M)&\rightarrow&0\\
&\downarrow&&\downarrow&&\downarrow&&\\
0&\rightarrow{\rm Hom}(R,P)&\rightarrow&{\rm Hom}(R,Q)&\rightarrow&{\rm Hom}(R,M)&\rightarrow&0.\end{array}$$
To construct an injective coresolution of $\widehat{M}$, we use the inverse Nakayama functor $\nu^-={\rm Hom}((k\mathcal{Q})^*,\_)$ which induces an equivalence between the full subcategory of $\bmod\, k\mathcal{Q}$ of injective modules and ${\rm proj}\, k\mathcal{Q}$, namely ${\rm Hom}(\_,I)\simeq{\rm Hom}(\nu^-I,\_)^*$. If $M$ itself is an injective $I$, then $\widehat{I}\simeq{\rm Hom}((\nu^-I=\nu^-I),\_)^*$ is an injective object of $\bmod\,\mathcal{H}_\mathcal{Q}^{\rm op}$, since
$$\widehat{I}(P\subset Q)={\rm Im}({\rm Hom}(Q,I)\rightarrow{\rm Hom}(P,I))={\rm Hom}(P,I)\simeq$$
$$\simeq{\rm Hom}(\nu^-I,P)^*\simeq{\rm Hom}((\nu^-I=\nu^-I),(P\subset Q))^*.$$
Otherwise we can assume $M$ to be without injective direct summands and choose an injective coresolution $0\rightarrow M\rightarrow I\rightarrow J\rightarrow 0$. By definition, we have $\nu^-M=0$, yielding an embedding $\nu^-I\subset\nu^-J$. Similar to the above case of a projective resolution, and making use of the Nakayama functor, we can verify that
$$0\rightarrow\widehat{M}\rightarrow{\rm Hom}((\nu^-I=\nu^-I),\_)^*\rightarrow{\rm Hom}((\nu^-I\subset\nu^-J),\_)^*\rightarrow 0$$
is an injective coresolution of $\widehat{M}$.\\[1ex]
To prove the second part of the theorem, we apply ${\rm Hom}(\_,\widehat{M})$ to the above projective resolution of $\widehat{M}$ and get
$$0\rightarrow{\rm Hom}(\widehat{M},\widehat{M})\rightarrow{\rm Hom}({\rm Hom}(\_,(Q=Q)),\widehat{M})\rightarrow$$
$$\rightarrow {\rm Hom}({\rm Hom}(\_,(P\rightarrow Q)),\widehat{M})\rightarrow{\rm Ext}^1(\widehat{M},\widehat{M})\rightarrow 0.$$
The first term equals ${\rm Hom}(M,M)$, and the second and third term can be computed using Yoneda's lemma, yielding the sequence
$$0\rightarrow{\rm Hom}(M,M)\rightarrow \widehat{M}(Q=Q)\rightarrow \widehat{M}(P\subset Q)\rightarrow{\rm Ext}^1(\widehat{M},\widehat{M})\rightarrow 0.$$
By the definition of $\widehat{M}$, this reads
$$0\rightarrow{\rm Hom}(M,M)\rightarrow{\rm Hom}(Q,M)\stackrel{\alpha}{\rightarrow}{\rm Im}({\rm Hom}(Q,M)\stackrel{\alpha}{\rightarrow}{\rm Hom}(P,M))\rightarrow$$
$$\rightarrow{\rm Ext}^1(\widehat{M},\widehat{M})\rightarrow 0.$$
We see that the second map is tautologically surjective, thus the desired vanishing follows.
The theorem is proved.
\end{proof}

\section{The algebra $B_\mathcal{Q}$}\label{section6}

By the results of the previous section, the utility of the algebra $B_\mathcal{Q}=B(\mathcal{H}_\mathcal{Q})$ is the following: it is an algebra of global dimension at most two, such that the original module category $\bmod\, k\mathcal{Q}$ embeds into the subcategory of $\bmod\, B_\mathcal{Q}$ of objects of projective and injective dimension at most one, in such a way that all non-trivial extensions in $\bmod\, k\mathcal{Q}$ vanish after the embedding. In contrast, the natural embedding $M\mapsto{\rm Hom}(\_,M)$ of $\bmod\, k\mathcal{Q}$ into $\bmod\,(\bmod\,k\mathcal{Q})^{\rm op}$ in general yields projective functors of injective dimension two. We will see in Proposition \ref{grsmooth} why all these properties of $\Lambda$ are essential for the construction of desingularizations of quiver Grassmannians.\\[1ex]
In this section, we first determine the quiver of the algebra $B_\mathcal{Q}$ and compute some concrete examples of $B_\mathcal{Q}$ and of the functor $\Lambda$. We explain the relation of $B_\mathcal{Q}$ to the Auslander algebra of $k\mathcal{Q}$ and give a characterization of the essential image of $\Lambda$.

\subsection{Quiver of $B_\mathcal{Q}$}

We are now able to compute the (ordinary) quiver of the algebra $B_\mathcal{Q}$:

\begin{prop}
The quiver $\widehat{\mathcal{Q}}$ of the algebra $B_\mathcal{Q}$ is given as follows: it has vertices $[U]$ parametrized by isomorphism classes of non-projective indecomposables in $\bmod\, k\mathcal{Q}$, together with vertices $[i]$ for $i\in \mathcal{Q}_0$. There is an arrow $[U]\rightarrow [V]$ for every irreducible map $V\rightarrow U$ between non-projective indecomposables. Moreover, there are arrows $ [i]\rightarrow[S_i]$, resp. $[\tau^{-1}S_i]\rightarrow[i]$, for every vertex $i\in \mathcal{Q}_0$, as long as $S_i$ is non-projective, resp.~non-injective.
\end{prop}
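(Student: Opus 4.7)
The plan is to identify the vertices via Proposition \ref{indecinhomprojkq} and then count arrows by computing $\mathrm{top}(\mathrm{rad}\, P_{[X]})$ for each indecomposable projective $P_{[X]} = \mathrm{Hom}(\_, X)$ in $\bmod\,\mathcal{H}_\mathcal{Q}^{\mathrm{op}} \simeq \bmod\, B_\mathcal{Q}$, read off from the minimal projective presentations of the simples $S_{[X]}$. The vertex description is immediate: the indecomposable objects of $\mathcal{H}_\mathcal{Q}$ are $X_U := (P_U \subset Q_U)$ for a non-projective indecomposable $U$ and $Y_i := (P_i = P_i)$ for $i \in \mathcal{Q}_0$. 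Since $B_\mathcal{Q} = \mathrm{End}(\bigoplus U_i)^{\mathrm{op}}$, the number of arrows $[X] \to [Y]$ equals the multiplicity of $S_{[Y]}$ in $\mathrm{top}(\mathrm{rad}\, P_{[X]})$, i.e.\ the multiplicity of $Y$ as a summand of the first term of the minimal projective resolution of $S_{[X]}$.

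For the vertex $[i]$, the resolution $0 \to \mathrm{Hom}(\_, (\mathrm{rad}\, P_i \subset P_i)) \to P_{[i]} \to S_{[i]} \to 0$ from the proof of Theorem \ref{gldim2}, combined with the indecomposability of $(\mathrm{rad}\, P_i \subset P_i) = X_{S_i}$, shows $\mathrm{rad}\, P_{[i]} \simeq P_{[S_i]}$ has simple top $S_{[S_i]}$. Hence there is exactly one arrow $[i] \to [S_i]$ when $S_i$ is non-projective and none when $S_i = P_i$ (so that $P_{[i]}$ is itself simple).

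For $[U]$ with $U$ non-projective indecomposable, I use the resolution from Theorem \ref{gldim2} coming from the Auslander-Reiten sequence $0 \to \tau U \to B \to U \to 0$ and the horseshoe construction, and decompose the object $(P_B \subset Q_B)$ as a sum of indecomposables in $\mathrm{Hom}^{\mathrm{mono}}(\mathrm{proj}\, k\mathcal{Q})$. Writing $B = \bigoplus_V V^{n_V}$ with $n_V$ the number of irreducible morphisms $V \to U$ in the AR-quiver of $k\mathcal{Q}$, and noting that the horseshoe resolution differs from the minimal projective resolution of $B$ by a direct sum of trivial complexes $0 \to P_i \xrightarrow{\mathrm{id}} P_i \to 0$, I obtain $(P_B \subset Q_B) \simeq \bigoplus_V X_V^{n_V} \oplus \bigoplus_i Y_i^{b_i} \oplus \bigoplus_j (0 \subset P_j)^{c_j}$ for suitable multiplicities $b_i, c_j$. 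Since $k\mathcal{Q}$ is hereditary, $\mathrm{Hom}(M, P_j) = 0$ for every non-projective indecomposable $M$ (else the image would be a projective summand of an indecomposable non-projective), so $\mathrm{Hom}(\_, (0 \subset P_j))$ vanishes on $\mathcal{H}_\mathcal{Q}$ and the $(0 \subset P_j)$-summands contribute nothing. Thus $\mathrm{top}(\mathrm{rad}\, P_{[U]}) = \bigoplus_V S_{[V]}^{n_V} \oplus \bigoplus_i S_{[i]}^{b_i}$, producing $n_V$ arrows $[U] \to [V]$ for each irreducible $V \to U$ with $V$ non-projective, together with $b_i$ arrows $[U] \to [i]$.

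The main obstacle is to show $b_i \in \{0,1\}$, with $b_i = 1$ precisely when $\tau U = S_i$. Counting multiplicities of $P_i$ in the relevant syzygies gives $b_i = \dim\mathrm{Ext}^1(\tau U, S_i) + \dim\mathrm{Ext}^1(U, S_i) - \dim\mathrm{Ext}^1(B, S_i)$, and the long exact sequence associated to the AR-sequence yields $b_i = \dim \mathrm{im}(\delta)$ for the connecting map $\delta: \mathrm{Hom}(\tau U, S_i) \to \mathrm{Ext}^1(U, S_i)$. By the defining property of the AR-sequence, a morphism $\varphi: \tau U \to S_i$ extends to $B \to S_i$ iff $\varphi$ is not a split monomorphism, and since $\tau U$ and $S_i$ are both indecomposable, such sections exist only when $\tau U \simeq S_i$. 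Hence $\mathrm{Hom}(B, S_i) \to \mathrm{Hom}(\tau U, S_i)$ is surjective (so $\delta = 0$ and $b_i = 0$) unless $\tau U \simeq S_i$, in which case the identity fails to extend and $\mathrm{im}(\delta) \subset \mathrm{Ext}^1(U, S_i)$ is one-dimensional, giving $b_i = 1$. Since $\tau U = S_i$ for non-projective $U$ requires $U = \tau^{-1}S_i$, which exists precisely when $S_i$ is non-injective, the claim about the arrows $[\tau^{-1}S_i] \to [i]$ follows, completing the description of $\widehat{\mathcal{Q}}$.
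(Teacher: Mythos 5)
Your proof is correct and reaches the same conclusions, but it implements one step differently from the paper. For the arrows $[U]\to[V]$ between non-projective indecomposables and the arrows $[i]\to[S_i]$, your approach and the paper's are essentially the same (both read the arrows off the projective resolutions of the simples, the paper by computing $\mathrm{Ext}^1$ directly, you by computing $\mathrm{top}(\mathrm{rad}\,P)$ — which are equivalent formulations). Where you diverge is the arrows $[U]\to[i]$: the paper avoids the multiplicity bookkeeping for the $(P_i=P_i)$-summands of the middle term by switching to an explicit injective coresolution of $S_{P_i=P_i}$ derived from the inverse Nakayama functor, and then computing $\mathrm{Ext}^1(S_{P_U\subset Q_U},S_{P_i=P_i})$ from the injective side. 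You instead stay on the projective side and explicitly compute the multiplicity $b_i$ of $(P_i=P_i)$ in $(P_B\subset Q_B)$ via the long exact sequence of $\mathrm{Hom}(-,S_i)$ applied to the AR-sequence, identifying $b_i$ with $\dim\mathrm{im}(\delta)$ for the connecting map and then invoking the left-almost-split property of $\tau U\hookrightarrow B$. This is a valid and arguably more self-contained computation. One small point you should make explicit: reading arrows off $\mathrm{top}(\mathrm{rad}\,P_{[U]})$ requires the exhibited resolution from Theorem \ref{gldim2} to be \emph{minimal} (whereas the paper's direct $\mathrm{Ext}^1$ computation from a not-necessarily-minimal resolution does not), and minimality here follows from the fact that neither $U$ nor $\tau U$ is a direct summand of the middle term $B$ of an AR-sequence, so neither $X_U$ nor $X_{\tau U}$ can split off from $(P_B\subset Q_B)$. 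Also, your justification that $\mathrm{Hom}(\_,(0\subset P_j))$ vanishes on $\mathcal{H}_\mathcal{Q}$ is a bit compressed; the cleaner route is to note that $\mathrm{Hom}\bigl((R\subset S),(0\subset P_j)\bigr)\simeq\mathrm{Hom}(S/R,P_j)$ and that for $(R\subset S)\in\mathcal{H}_\mathcal{Q}$ the module $S/R$ has no projective direct summands, whence the hereditary argument you give applies.
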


\begin{proof} Using the above projective resolutions of the simple functors, we can compute the
${\rm Ext}$-quiver of the algebra $B_\mathcal{Q}$. Namely, we can compute ${\rm Ext}^1(S_{P_U\subset Q_U},F)$ as the first homology of the complex with terms ${\rm Hom}({\rm Hom}(\_,(P_X\subset Q_X)),F)$ with $X$ being $\tau U$ or $B$ or $U$, respectively, which using Yoneda simplifies to the complex
$$F(P_{U}\subset Q_{U})\rightarrow F(P_B\subset Q_B)\rightarrow F(P_{\tau U}\subset Q_{\tau U}).$$
Now suppose that ${\rm Ext}^1(S_{P_U\subset Q_U},S_{P_V\subset Q_V})$ is non-zero. Then $S_{P_V\subset Q_V}(P_B\subset Q_B)$ is non-zero, thus $V$ is a direct summand of $B$. But then $V$ fulfills the following: it admits an irreducible map to $U$ in $\bmod\, k\mathcal{Q}$, it occurs as a direct summand of $B$ with multiplicity one, and it is not a direct summand of $U$ or of $\tau U$. This in turn implies that ${\rm Ext}^1(S_{P_U\subset Q_U},S_{P_V\subset Q_V})$ is one-dimensional. We have thus proved that $ {\rm Ext}^1(S_{P_U\subset Q_U},S_{P_V\subset Q_V})\not=0$ if and only if $V$ admits an irreducible map to $U$, in which case ${\rm Ext}^1(S_{P_U\subset Q_U},S_{P_V\subset Q_V})$ is one-dimensional.

Similarly we compute ${\rm Ext}^1(S_{P_i=P_i},F)$ as the first homology of the complex
$$0\rightarrow F(P_i=P_i)\rightarrow F(\bigoplus_{i\rightarrow j}P_j\subset P_i),$$
which for $F=S_{P_U\subset Q_U}$ is obviously non-zero (and one-dimensional in this case) if and only if $U=S_i$.

It also follows that ${\rm Ext}^1(S_{P_i=P_i},S_{P_j=P_j})=0$ for all $i,j\in \mathcal{Q}_0$.

Finally, to compute ${\rm Ext}^1(S_{P_U\subset Q_U},S_{P_i=P_i})$, we use an injective coresolution of $S_{P_i=P_i}$ analogous to the projective resolution exhibited in the proof of Theorem \ref{gldim2}. We use the injective coresolution
$$0\rightarrow S_i\rightarrow I_i\rightarrow I_i/{\rm soc}I_i\simeq\bigoplus_{j\rightarrow i}I_j
\rightarrow 0,$$
which yields a projective resolution
$$0\rightarrow P_i\rightarrow\bigoplus_{j\rightarrow i}P_j\rightarrow\tau^{-1}S_i\rightarrow 0$$
using the inverse Nakayama functor. From this, we can easily derive the injective coresolution
$$0\rightarrow S_{P_i=P_i}\rightarrow{\rm Hom}((P_i=P_i),\_)^*\rightarrow{\rm Hom}((P_i\subset\bigoplus_{j\rightarrow i}P_j),\_)^*\rightarrow 0.$$
Similarly to the above, we see that ${\rm Ext}^1(S_{P_U\subset Q_U},S_{P_i=P_i})$ is non-zero (and one-dimensional in this case) if and only if $U\simeq\tau^{-1}S_i$. The theorem is proved.
\end{proof}
\subsection{Examples} 
We now give some examples of the quivers $\widehat{\mathcal{Q}}$ and of their representations $\widehat{M}$.

\begin{example}\label{Ex:An}
Let $\mathcal{Q}$ be the equioriented quiver of type $A_n$. Then the quiver $\widehat{\mathcal{Q}}$ of $B_\mathcal{Q}$
is the Auslander-Reiten quiver of $k\mathcal{Q}$, and the algebra $B_\mathcal{Q}$ is given by imposing all commutativity relations, but no zero relations (see subsection \ref{eAn} for more details). We want to stress that in general the quiver $\widehat{\mathcal{Q}}$ {\bf does not} coincide
with the AR quiver of $\mathcal{Q}$ and the algebra $B_\mathcal{Q}$ is {\bf not} isomorphic to the
Auslander algebra of $\mathcal{Q}$.
\end{example}
\begin{example}\label{Ex:A3}
Let $\mathcal{Q}$ be $\xymatrix@1{1\ar[r]&2\ar[r]&3}$, the equioriented quiver of type $A_3$.
The algebra $B_\mathcal{Q}$ is given by the following quiver with one commutativity relation
$$
\xymatrix{
                 &                                    &[I_2]\ar[dr]&                 &\\
                 &[S_2]\ar@{..}[rr]\ar[ur]\ar[dr]&                 &[S_1]\ar[dr]&\\
[1]\ar[ur]&                                    &[2]\ar[ur]&                 &[3]
}
$$
\end{example}
\begin{example}\label{Ex:A3noequi}
Let $\mathcal{Q}$ be a quiver $\xymatrix@1{1\ar[r]&2&\ar[l]3}$ of type $A_3$. The algebra $B_\mathcal{Q}$ is given by the following quiver of type $E_6$
$$
\xymatrix{
            &              &[2]           &              &\\
[1]\ar[r]&[S_1]\ar[r]&[I_2]\ar[u]&[S_3]\ar[l]&[3]\ar[l]
}
$$
\end{example}
\begin{example}\label{Ex:D4}
Let $\mathcal{Q}$ be $$\xymatrix@1@R=1pt{&1\ar[dr]&&\\&2\ar[r]&4\\&3\ar[ur]&}$$ which is the "three subspaces" quiver of type $D_4$. The algebra $B_\mathcal{Q}$ is given by the following quiver with four mesh relations
$$
\xymatrix{
[1]\ar[r]                &[S_1]\ar[dr]\ar@{..}[rr]&                     &[\tau S_1]\ar[dr]&        &\\
[2]\ar[r] &  [S_2]\ar[r]\ar@/^1pc/@{..}[rr]       &[I_4]\ar[ur]\ar[dr]\ar[r]\ar@/^1pc/@{..}[rr]&[\tau S_2]\ar[r]         &[\tau I_4]\ar[r]&[4]\\
[3]\ar[r]&                [S_3]\ar[ur]\ar@{..}[rr]&                     &[\tau S_3]\ar[ur]&        &
}
$$
\end{example}

We now give examples of the functor $\Lambda:\mathrm{mod }k\mathcal{Q}\rightarrow \mathrm{mod }B_\mathcal{Q}:$
$M\mapsto\widehat{M}$.

\begin{example}\label{ExA3Mhat}
Let $M:=\xymatrix@1{M_1\ar^f[r]&M_2\ar[r]^g&M_3}$ be a (finite dimensional) representation of the equioriented quiver of type $A_3$. The algebra $B_\mathcal{Q}$ is shown in Example \ref{Ex:A3}. The $B_\mathcal{Q}$-module $\widehat{M}$ is the following
$$
\xymatrix{
                 &                                    &\text{Im }g\circ f\ar@<-1ex>@{^{(}->}[dr]&                 &\\
                 &\text{Im }f\ar@{->>}[ur]^g\ar@{^{(}->}[dr]&                 &\text{ Im }g\text{ }\ar@<-1ex>@{^{(}->}[dr]&\\
M_1\ar@{->>}[ur]^f&                                    &M_2\ar@{->>}[ur]^g&                 &M_3
}
$$
\end{example}
\begin{example}\label{Ex:A3noequiMhat}
Let $M:=\xymatrix@1{M_1\ar^f[r]&M_2&\ar_g[l]M_3}$ be a (finite dimensional) representation of the quiver $\mathcal{Q}:=\xymatrix@1{1\ar[r]&2&\ar[l]3}$. The algebra $B_\mathcal{Q}$ is shown in Example \ref{Ex:A3noequi}. The $B_\mathcal{Q}$--module $\widehat{M}$ is the following
$$
\xymatrix{
            &              &M_2           &              &\\
M_1\ar@{->>}[r]^f&\text{ Im }f\text{ }\ar@{^{(}->}[r]&*+<1pc>{\text{ Im }[f,g]}\ar@{^{(}->}[u]&\text{ Im }g\text{ }\ar@{_{(}->}[l]&M_3\ar@{->>}_g[l]
}
$$
where $[f,g]:M_1\oplus M_3\rightarrow M_2:$ $(v,w)\mapsto f(v)+g(w)$.
\end{example}
\begin{example}\label{Ex:D4Mhat}
Let $\xymatrix@1@R=1pt{M:=&M_1\ar[dr]^{f_1}&&&\\&M_2\ar[r]_{f_2}&M_4&M_3\ar[l]_{f_3}}$ be a finite dimensional representation of the quiver of Example \ref{Ex:D4}. The $B_\mathcal{Q}$--module $\widehat{M}$ is given by
$$
\xymatrix@C=16pt{
M_1\ar@{->>}[r]^{f_1}                &[f_1]\ar@{^{(}->}[dr]^(.4){\left[\tiny{\txt{1\\0}}\right]}&                     &*+<1pc>{[f_2,f_3]}\ar@{^{(}->}[dr]&        &\\
M_2\ar@{->>}[r]^{f_2} &[f_2]\ar@{^{(}->}[r]^(.3){\left[\tiny{\txt{0\\1}}\right]}       &*+<1pc>{\left[\begin{array}{ccc}f_1&0&f_3\\0&f_2&-f_3\end{array}\right]}\ar@{->>}[ur]^{[0,1]}\ar@{->>}[dr]_(.6){[-1,-1]}
\ar@{->>}[r]^(.6){[1,0]}&*+<1pc>{[f_1,f_3]}\ar@{^{(}->}[r]         &*+<1pc>{[f_1,f_2,f_3]}\ar@{^{(}->}[r]&M_4\\
M_3\ar@{->>}[r]^{f_3}&[f_3]\ar@{^{(}->}[ur]_(.4){\left[\tiny{\txt{1\\-1}}\right]}&                     &*+<1pc>{[f_1,f_2]}\ar@{^{(}->}[ur]&        &
}
$$
In the picture above we use the following convention: for a linear map $f:V\rightarrow W$ we denote by $[f]:=\text{ Im }f\subset W$. For example $\left[\begin{array}{ccc}f_1&0&f_3\\0&f_2&-f_3\end{array}\right]$ denotes the image of the linear map $\left[\begin{array}{ccc}f_1&0&f_3\\0&f_2&-f_3\end{array}\right]:M_1\oplus M_2\oplus M_3\rightarrow M_4\oplus M_4$.
\end{example}

\subsection{Comparison between $B_\mathcal{Q}$ and $A_\mathcal{Q}$}
We discuss the relation between the algebra $B_\mathcal{Q}$ and the Auslander algebra $A_\mathcal{Q}=B(\bmod\, k\mathcal{Q})$ of $\bmod\, k\mathcal{Q}$. The equivalence $\mathcal{H}_\mathcal{Q}/{\rm Hom}^{\rm iso}({\rm proj}\, k\mathcal{Q})\simeq\underline{\bmod}\, k\mathcal{Q}$ immediately identifies a quotient of $B_\mathcal{Q}$ with the subalgebra ${\rm End}(\bigoplus_UU)^{\rm op}$ of $A_\mathcal{Q}$, where the sum runs over all non-projective indecomposables of $\bmod\, k\mathcal{Q}$. Moreover, $A_\mathcal{Q}$ arises via tilting (see \cite[VI.]{ASS}) from $B_\mathcal{Q}$:

\begin{prop} The direct sum $T=\bigoplus_U\widehat{U}$ over  indecomposable modules $U$ in $\bmod\, k\mathcal{Q}$
is a tilting object in $\bmod\,\mathcal{H}_\mathcal{Q}^{\rm op}$, such that ${\rm End}(T)^{\rm op}$ is isomorphic to the Auslander algebra $A_\mathcal{Q}$ of $k\mathcal{Q}$.
\end{prop}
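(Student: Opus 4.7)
The plan is to verify the defining conditions of a (classical) tilting module for $T$ one by one, drawing directly on Theorem \ref{ext2ext1} and the full faithfulness of $\Lambda$ from Section \ref{section5}. My starting observation is that $\Lambda$ preserves direct sums, since both $\mathrm{Hom}(-,M)$ and the image subspace of a linear map are additive in $M$; in particular $T \cong \widehat{\bigoplus_U U}$. Combined with the corollary at the end of Section \ref{section5}, this yields $\mathrm{End}_{B_\mathcal{Q}}(\widehat{U}) \cong \mathrm{End}_{k\mathcal{Q}}(U)$, so each $\widehat{U}$ with $U$ indecomposable is itself indecomposable and non-isomorphic $U$'s remain non-isomorphic after $\Lambda$. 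By Proposition \ref{indecinhomprojkq}, the count of indecomposable summands of $T$, namely $|\mathrm{ind}\, k\mathcal{Q}|$, equals the number of isomorphism classes of indecomposables in $\mathcal{H}_\mathcal{Q}$, hence the number of simple $B_\mathcal{Q}$-modules, which is the summand-count condition for a tilting module.

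The two cohomological conditions follow from Theorem \ref{ext2ext1}: part \eqref{a)} gives that each $\widehat{U}$, and hence $T$, has projective dimension at most one, and applying \eqref{b)} to $M = \bigoplus_U U$ together with additivity of $\Lambda$ yields $\mathrm{Ext}^1_{B_\mathcal{Q}}(T,T) = 0$.

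The remaining requirement is a two-term $\mathrm{add}\,T$-coresolution of the regular module ${}_{B_\mathcal{Q}}B_\mathcal{Q}$; I would build one by resolving each indecomposable projective separately, using the classification in Proposition \ref{indecinhomprojkq}. The projective $\mathrm{Hom}(\_,(P_i = P_i))$ coincides with $\widehat{P_i}$ and so already lies in $\mathrm{add}\,T$. For the projective $\mathrm{Hom}(\_,(P_U \subset Q_U))$ with $U$ non-projective indecomposable, the short exact sequence
$$0 \to \mathrm{Hom}(\_,(P_U \subset Q_U)) \to \mathrm{Hom}(\_,(Q_U = Q_U)) \to \widehat{U} \to 0$$
extracted from the proof of Theorem \ref{ext2ext1} does the job: decomposing $Q_U$ into indecomposable projectives $P_{i_j}$ of $k\mathcal{Q}$ exhibits the middle term as a direct sum of the $\widehat{P_{i_j}}$, while $\widehat{U}$ is a summand of $T$ by definition. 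Taking the direct sum of these resolutions over all indecomposable projectives of $B_\mathcal{Q}$ produces the required coresolution of ${}_{B_\mathcal{Q}}B_\mathcal{Q}$.

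For the endomorphism algebra, full faithfulness and additivity of $\Lambda$ give at once
$$\mathrm{End}_{B_\mathcal{Q}}(T)^{\rm op} \cong \mathrm{End}_{k\mathcal{Q}}\Bigl(\bigoplus_U U\Bigr)^{\rm op} = A_\mathcal{Q}.$$
I do not anticipate a substantial obstacle: every ingredient has been prepared in Sections \ref{section3}--\ref{section5}, and the one point requiring genuine care is the bookkeeping that identifies indecomposables of $\bmod\, k\mathcal{Q}$ with simples of $B_\mathcal{Q}$ through Proposition \ref{indecinhomprojkq}.
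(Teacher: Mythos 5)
Your proof is correct and follows essentially the same route as the paper: projective dimension at most one and $\mathrm{Ext}^1(T,T)=0$ from Theorem \ref{ext2ext1}, the two-term $\mathrm{add}\,T$-coresolution of each indecomposable projective in $\bmod\,\mathcal{H}_\mathcal{Q}^{\rm op}$ from the short exact sequence in that theorem's proof, and the identification of the endomorphism ring from full faithfulness of $\Lambda$. The only addition is your verification of the indecomposable-summand count via Proposition \ref{indecinhomprojkq}, which is a harmless redundancy once the explicit coresolution of the regular module is in hand.
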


\begin{proof} Theorem \ref{ext2ext1} shows that ${\rm Ext}^1(T,T)=0$, and that $T$ has projective dimension at most $1$. Rewriting the projective resolution of $\widehat{M}$ of the proof of Theorem \ref{ext2ext1} as
$$0\rightarrow {\rm Hom}(\_,(P\subset Q))\rightarrow\widehat{Q}\rightarrow\widehat{M}\rightarrow 0$$
for a projective resolution $0\rightarrow P\rightarrow Q\rightarrow M\rightarrow 0$, we see that all indecomposable projective objects in $\bmod\,\mathcal{H}_\mathcal{Q}^{\rm op}$ admit a short coresolution by sums of direct summands of $T$, proving that $T$ is a tilting object. The functor $\Lambda$ being fully faithful, we see that $${\rm End}(T)^{\rm op}\simeq{\rm End}(\bigoplus_UU)^{\rm op}=A_\mathcal{Q}.$$
\end{proof}
The tilting $B_\mathcal{Q}$--module $T$ induces a derived equivalence $F:=RHom_{B_\mathcal{Q}}(T,\_):\mathcal{D}^b(\text{mod--}B_\mathcal{Q})\rightarrow \mathcal{D}^b(\text{mod--}A_\mathcal{Q})$ \cite[Theorem 1.6]{H}. The image $F(B_\mathcal{Q})$ of $B_\mathcal{Q}$ under this functor is a tilting complex $T'$ whose endomorphism ring is the algebra $B_\mathcal{Q}$. The next proposition shows this tilting complex explicitly. In order to state the result we need a little preparation. Under the isomorphism $\text{End}_{B_\mathcal{Q}}(T)^{\text{op}}\simeq A_\mathcal{Q}$, every direct summand
$\widehat{M}$ for $M\in \textrm{ind }k\mathcal{Q}$ of $T$ corresponds to an indecomposable projective $A_\mathcal{Q}$--module which we denote by $A_{M}$.
\begin{prop}
The tilting complex $T'$ in $\mathcal{D}^b(\text{mod--}A_\mathcal{Q})$ is given as follows
$$
T'={\displaystyle \bigoplus_{i\in \mathcal{Q}_0}}A_{P_i}\oplus\bigoplus_{U\in \textrm{ind }k\mathcal{Q}\setminus
\textrm{proj }k\mathcal{Q}} (A_{Q_U}\rightarrow A_{U})
$$
where $0\rightarrow P_U\rightarrow Q_U\rightarrow U\rightarrow 0$ is the minimal projective resolution of $U$, the complex $A_{P_i}$ is concentrated in degree $0$, and the complex $(A_{Q_U}\rightarrow A_U)$ is concentrated in degrees $0$ and $1$.
\end{prop}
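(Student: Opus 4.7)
The plan is to compute $F(B_\mathcal{Q})$ summand by summand. Viewed as a module over itself, $B_\mathcal{Q}$ is the projective generator of $\bmod\,\mathcal{H}_\mathcal{Q}^{\rm op}$ and decomposes as $\bigoplus_X{\rm Hom}(\_,X)$, where by Proposition~\ref{indecinhomprojkq} (and the exclusion of the $(0\subset P)$'s from $\mathcal{H}_\mathcal{Q}$) the index $X$ runs over the objects $(P_i=P_i)$ with $i\in\mathcal{Q}_0$ and the objects $(P_U\subset Q_U)$ with $U$ a non-projective indecomposable. Since $F$ commutes with finite direct sums, the proposition reduces to identifying $F({\rm Hom}(\_,X))$ in each of these two cases.

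For $X=(P_i=P_i)$, the identification $\widehat{P_i}\simeq{\rm Hom}(\_,(P_i=P_i))$ established in the proof of Theorem~\ref{ext2ext1} shows that this summand of $B_\mathcal{Q}$ is itself a direct summand of the tilting module $T$; under ${\rm End}_{B_\mathcal{Q}}(T)^{\rm op}\simeq A_\mathcal{Q}$ it corresponds, by the very definition of $A_M$, to the indecomposable projective $A_{P_i}$, and the ${\rm Ext}$-vanishing from Theorem~\ref{ext2ext1}(ii) gives $F(\widehat{P_i})\simeq A_{P_i}$ concentrated in degree $0$. For $X=(P_U\subset Q_U)$, the short exact sequence
$$0\to{\rm Hom}(\_,(P_U\subset Q_U))\to\widehat{Q_U}\to\widehat{U}\to 0$$
appearing in the projective resolution of $\widehat{U}$ in the proof of Theorem~\ref{ext2ext1} displays ${\rm Hom}(\_,(P_U\subset Q_U))$ as quasi-isomorphic to the two-term complex $[\widehat{Q_U}\to\widehat{U}]$ placed in degrees $0$ and $1$. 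Both terms lie in ${\rm add}\,T$, so $R{\rm Hom}_{B_\mathcal{Q}}(T,\_)$ can be computed term-wise thanks to Theorem~\ref{ext2ext1}(ii), yielding $[A_{Q_U}\to A_U]$ in degrees $0$ and $1$. Assembling the contributions produces the claimed decomposition of $T'$.

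The main obstacle, more a bookkeeping matter than a genuine difficulty, is to verify that the differential in $[A_{Q_U}\to A_U]$ is induced, via the Yoneda/${\rm End}(T)$-identification, by the canonical surjection $Q_U\twoheadrightarrow U$ on the $k\mathcal{Q}$ side, so that the complex really matches the one written in the statement; this is immediate from naturality of the Yoneda correspondence and the definition of $A_M$. No extra homological input is needed beyond the projective resolutions and ${\rm Ext}$-vanishing already developed in Section~\ref{section5}.
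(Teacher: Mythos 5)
Your proof is correct and takes essentially the same route as the paper: decompose $B_\mathcal{Q}$ into the indecomposable projectives $\mathrm{Hom}(\_,(P_i=P_i))$ and $\mathrm{Hom}(\_,(P_U\subset Q_U))$, use $\widehat{P_i}\simeq\mathrm{Hom}(\_,(P_i=P_i))\in\mathrm{add}\,T$ to get $A_{P_i}$ in degree $0$, and use the short exact sequence $0\to\mathrm{Hom}(\_,(P_U\subset Q_U))\to\widehat{Q_U}\to\widehat{U}\to 0$ together with $\mathrm{Ext}$-vanishing from Theorem~\ref{ext2ext1} to identify the remaining summands with the two-term complexes $[A_{Q_U}\to A_U]$. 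The paper phrases the second step via the associated distinguished triangle and you phrase it via quasi-isomorphism to a two-term complex of $T$-acyclic objects, but these are the same argument.
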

\begin{proof}
We write $B_\mathcal{Q}=\bigoplus_{i} B_i$ as the sum of all the indecomposable projective $B_\mathcal{Q}$--modules. Recall that these modules correspond to the functors $\text{Hom}(\_,(P_i=P_i))$, $i\in \mathcal{Q}_0$ and $\text{Hom}(\_,(P_U\rightarrow  Q_U))$, for $U\in \textrm{ind }k\mathcal{Q}\setminus \textrm{proj }k\mathcal{Q}$. Since
$\text{Hom}(\_, (P_i=P_i))\simeq \widehat{P_i}$, the corresponding $B_i$ is a summand of $T$ and hence $F(B_i)=A_{P_i}$ is an indecomposable projective $A_\mathcal{Q}$--module. It remains to find the image of the remaining direct summands
$B_j=\text{Hom}(\_,(P_U\rightarrow  Q_U))$ of $B$, for $U\in \textrm{ind }k\mathcal{Q}\setminus \textrm{proj }k\mathcal{Q}$.
Every such projective $B_j$ arises in the projective resolution of $B_\mathcal{Q}$--modules
$$
0\rightarrow B_j \rightarrow\widehat{Q_U}\rightarrow \widehat{U}\rightarrow 0.
$$
This induces a triangle
$$
B_j\rightarrow \widehat{Q_U}\rightarrow \widehat{U}\rightarrow B_j[1]
$$
in $\mathcal{D}^b(B_\mathcal{Q})$. We apply the triangle functor $F$ to this triangle and we get a triangle
$$
F(B_j)\rightarrow A_{Q_U}\rightarrow A_{U}\rightarrow F(B_j)[1]
$$
in $\mathcal{D}^b(A_\mathcal{Q})$. From this triangle we conclude that $F(B_j)$ is isomorphic to the complex
$(A_{Q_U}\rightarrow A_{U})$ (in degrees $0$ and $1$), as desired.
\end{proof}

\subsection{Essential image of $\Lambda$}
In Examples \ref{ExA3Mhat}, \ref{Ex:A3noequiMhat} and \ref{Ex:D4Mhat} we see that all the linear maps of the $\widehat{\mathcal{Q}}$--representation $\widehat{M}$ are either injective or surjective. The next proposition
shows that such properties hold in general, encoded in the vanishing of certain homomorphism spaces. In fact, we can give a characterization of the essential image of the functor $\Lambda$ as follows:

\begin{prop}\label{Prop:F=Mhat}
A functor $F\in\bmod\,\mathcal{H}_\mathcal{Q}^{\rm op}$ is isomorphic to $\widehat{M}$ for some $M\in\bmod\, k\mathcal{Q}$ if and only if ${\rm Hom}(S_{P_U\subset Q_U},F)=0={\rm Hom}(F,S_{P_U\subset Q_U})$ for all non-projective indecomposables $U\in\bmod\, k\mathcal{Q}$.
\end{prop}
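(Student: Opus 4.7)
The "only if" direction is immediate from the resolutions of $\widehat{M}$ produced in the proof of Theorem \ref{ext2ext1}. The projective cover of $\widehat{M}$ is $\widehat{Q} \simeq \mathrm{Hom}(\_,(Q=Q))$ with $Q$ the projective cover of $M$, so its top is a direct sum of simples $S_{P_i = P_i}$; dually, the injective envelope of $\widehat{M}$ is of the form $\mathrm{Hom}((\nu^- I = \nu^- I),\_)^*$ with $I$ the injective envelope of $M$, so its socle is again a direct sum of $S_{P_i = P_i}$. Consequently neither the top nor the socle of $\widehat{M}$ contains any $S_{P_U \subset Q_U}$ with $U$ non-projective, which is exactly what the two vanishing statements encode.

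For the converse, set $M := \mathrm{res}\, F$. The vanishing $\mathrm{Hom}(F, S_{P_U \subset Q_U}) = 0$ forces the top of $F$ to be a sum of simples $S_{P_i = P_i}$, so the projective cover of $F$ takes the form $\widehat{Y} = \mathrm{Hom}(\_,(Y=Y)) \twoheadrightarrow F$ for a suitable projective $Y$. Let $K$ be its kernel. Applying the exact functor $\mathrm{res}$ and using that $k\mathcal{Q}$ is hereditary, one obtains a projective resolution $0 \to P \to Y \stackrel{\pi}{\to} M \to 0$ in $\bmod\, k\mathcal{Q}$ with $P = \mathrm{res}\, K$. The construction in Theorem \ref{ext2ext1} then supplies a corresponding projective resolution $0 \to \mathrm{Hom}(\_,(P \subset Y)) \to \widehat{Y} \stackrel{\Lambda(\pi)}{\twoheadrightarrow} \widehat{M} \to 0$ of $\widehat{M}$, in which the surjection is $\Lambda$ applied to $\pi$.

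The key step is to check that $K \subseteq \ker(\Lambda(\pi))$. Consider the composition $\varphi : K \hookrightarrow \widehat{Y} \stackrel{\Lambda(\pi)}{\longrightarrow} \widehat{M}$. By Lemma \ref{lemweak}, the natural map $\mathrm{Hom}(K, \widehat{M}) \hookrightarrow \mathrm{Hom}(\mathrm{res}\, K, M) = \mathrm{Hom}(P, M)$ is injective, and $\mathrm{res}\,\varphi$ is precisely the composite $P \hookrightarrow Y \stackrel{\pi}{\to} M$, which vanishes since $P = \ker(\pi)$. Hence $\varphi = 0$, and passing to quotients yields a canonical surjection $F = \widehat{Y}/K \twoheadrightarrow \widehat{Y}/\mathrm{Hom}(\_,(P \subset Y)) = \widehat{M}$ whose restriction to $\bmod\, k\mathcal{Q}$ is $\mathrm{id}_M$ by construction.

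To see this surjection is an isomorphism, let $L$ be its kernel. Exactness of $\mathrm{res}$ together with $\mathrm{res}(F \twoheadrightarrow \widehat{M}) = \mathrm{id}_M$ gives $\mathrm{res}\, L = 0$; so $L$ vanishes on every object of the form $(P_i = P_i)$, and therefore every composition factor of $L$ is of the form $S_{P_U \subset Q_U}$. Were $L$ nonzero, its socle would be a nonzero sum of such simples embedding into $\mathrm{soc}(F)$, contradicting the second vanishing hypothesis. Hence $L = 0$, so $F \cong \widehat{M}$. The main hurdle is the inclusion $K \subseteq \ker(\Lambda(\pi))$; it is Lemma \ref{lemweak} that makes this cheap, by reducing the comparison of two subfunctors of $\widehat{Y}$ to a tautological vanishing in the hereditary category $\bmod\, k\mathcal{Q}$.
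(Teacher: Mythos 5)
Your proof is correct, but the route is genuinely different from the paper's, especially in the ``if'' direction. For ``only if'' the paper applies Lemma~\ref{lemweak} directly (using ${\rm res}\,S_{P_U\subset Q_U}=0$ to force both Hom spaces into ${\rm Hom}(\cdot,M)$ resp.\ ${\rm Hom}(M,\cdot)$ from or to the zero module), whereas you read the same vanishing off the explicit projective/injective (co)resolutions from Theorem~\ref{ext2ext1}; both are fine and essentially equivalent in content. For ``if'' the paper argues pointwise: setting $M={\rm res}\,F$, it shows that for every indecomposable $(P_U\subset Q_U)$ the evaluation map $F(P_U\subset Q_U)\to F(P_U=P_U)$ is injective (dually surjective on the other side), by choosing $U$ minimal with respect to the irreducible-map ordering among potential counterexamples and then using that ${\rm Hom}(S_{P_U\subset Q_U},F)$ is the common kernel of the $F(f)$ over non-split maps $f$ into $(P_U\subset Q_U)$. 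You instead build the comparison map $F\to\widehat{M}$ globally: the top vanishing gives a projective cover $\widehat{Y}\twoheadrightarrow F$ with $Y$ projective over $k\mathcal Q$; hereditarity and exactness of ${\rm res}$ produce $P={\rm res}\,K$ projective with $0\to P\to Y\to M\to 0$; Lemma~\ref{lemweak} shows $K\subseteq\ker\Lambda(\pi)={\rm Hom}(\_,(P\subset Y))$, giving a canonical surjection $F\twoheadrightarrow\widehat M$ which restricts to ${\rm id}_M$, and whose kernel $L$ must vanish because all its composition factors are of type $S_{P_U\subset Q_U}$ and would contradict the socle vanishing. Your approach is more structural and leans on the machinery of projective covers and of Theorem~\ref{ext2ext1}; the paper's is more hands-on and stays close to the defining formula for $\widehat{M}$. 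One small point worth noting explicitly: the identification of the projective cover of $F$ with $\widehat{Y}={\rm Hom}(\_,(Y=Y))$ uses the fact (implicit in the proof of Theorem~\ref{ext2ext1}) that ${\rm Hom}(U,Y)=0$ for $U$ non-projective indecomposable and $Y$ projective over the hereditary algebra $k\mathcal Q$, which is what makes $\widehat{Y}\simeq{\rm Hom}(\_,(Y=Y))$; it is true, but deserves a mention since it underlies the very first step of your construction.
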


\begin{proof} We have ${\rm res}\, S_{P_U\subset Q_U}=0$ by definition. Lemma \ref{lemweak} implies
$${\rm Hom}(\widehat{M},S_{P_U\subset Q_U})\subset {\rm Hom}(M,{\rm res}\, S_{P_U\subset Q_U})=0,$$
$${\rm Hom}(S_{P_U\subset Q_U},\widehat{M})\subset {\rm Hom}({\rm res}\, S_{P_U\subset Q_U},M)=0.$$
To prove the converse, assume that ${\rm Hom}(S_{P_U\subset Q_U},F)=0={\rm Hom}(F,S_{P_U\subset Q_U})$ for all non-projective indecomposables $U\in\bmod\, k\mathcal{Q}$ for a functor $F$. We define $M={\rm res}\, F$ and have to prove that $F\simeq \widehat{M}$. By definition, this amounts to proving the following: given an object $P\subset Q$ in $\mathcal{H}_\mathcal{Q}$, we have canonical maps $(P=P)\rightarrow (P\subset Q)\rightarrow (Q=Q)$ in $\mathcal{H}_\mathcal{Q}$ inducing a sequence
$$F(Q=Q)\rightarrow F(P\subset Q)\rightarrow F(P=P).$$
Then we have to prove that the first map is surjective and the second map is injective. We prove injectivity of the second map; surjectivity of the first map is proved dually. First we can restrict to the case of $P\subset Q$ being indecomposable, thus $(P\subset Q)=(P_U\subset Q_U)$ for a non-projective indecomposable $U$. Assume that $U$ is such that there exists an element $0\not=x\in F(P_U\subset Q_U)$ mapping to zero in $F(P_U=P_U)$. Without loss of generality, we can assume $U$ to be minimal with this property with respect to the ordering induced by irreducible maps. Using the description $$S_{P_U\subset Q_U}\simeq{\rm Hom}(\_,(P_U\subset Q_U))/{\rm rad}\,{\rm Hom}(\_,(P_U\subset Q_U)),$$
we can rewrite ${\rm Hom}(S_{P_U\subset Q_U},F)$ as the intersection of the kernels of the maps $F(f)$ for $f$ ranging over the non-split maps $f:(P_V\subset Q_V)\rightarrow(P_U\subset Q_U)$ in $\mathcal{H}_\mathcal{Q}$. Since this intersection is zero by assumption, there exists an indecomposable object $(P_V\subset Q_V)$ and a non-split map $f:(P_V\subset Q_V)\rightarrow (P_U\subset Q_U)$ such that $F(f)(x)\not=0$. We have a natural square
$$\begin{array}{ccc}
(P_V=P_V)&\rightarrow&(P_V\subset Q_V)\\
\downarrow&&\downarrow\\
(P_U=P_U)&\rightarrow&(P_U\subset Q_U)\end{array}$$
inducing the square
$$\begin{array}{ccc}
F(P_V=P_V)&\leftarrow&F(P_V\subset Q_V)\\
\uparrow&&\uparrow\\
F(P_U=P_U)&\leftarrow&F(P_U\subset Q_U).\end{array}$$
The element $x$ mapping to zero under the lower horizontal map, we see that $F(f)(x)\not=0$ maps to zero under the upper horizontal map, a contradiction to the minimality of $U$. The proposition is proved.
\end{proof}
The following examples show the AR--quiver of some algebras $B_\mathcal{Q}$ of finite representation type. These pictures also illustrate the statement of the previous proposition.

\begin{example}
Let $\mathcal{Q}:=\xymatrix@1{1\ar[r]&2\ar[r]&3}$ be the quiver of type $A_3$ already considered in Examples \ref{Ex:A3} and \ref{ExA3Mhat}. From the description of $B_\mathcal{Q}$ given in Example \ref{Ex:A3}, it follows that $B_\mathcal{Q}$ is of finite representation type. The following quiver is the AR--quiver of $B_\mathcal{Q}$.
$$
\xymatrix@R=8pt@C=8pt{
&&\Lambda\ar[dr]&&\bullet\ar[dr]&&\Lambda\ar[dr]&&\bullet\ar[dr]&&\Lambda\ar[dr]&&\\
&\bullet\ar[ur]\ar[r]\ar[dr]&\bullet\ar[r]&\bullet\ar[ur]\ar[r]\ar[dr]&\bullet\ar[r]&\bullet\ar[ur]\ar[r]\ar[dr]&S\ar[r]&\bullet\ar[ur]\ar[r]\ar[dr]&\bullet\ar[r]&\bullet\ar[ur]\ar[r]\ar[dr]&\bullet\ar[r]&\bullet\ar[dr]&\\
\Lambda\ar[ur]&&S\ar[ur]&&\bullet\ar[ur]\ar[dr]&&\bullet\ar[ur]\ar[dr]&&\bullet\ar[ur]&&S\ar[ur]&&\Lambda\\
&&&&&\bullet\ar[ur]\ar[dr]&&\bullet\ar[ur]&&&&&\\
&&&&&&\Lambda\ar[ur]&&&&&&
}
$$
In the picture above we denote by $\Lambda$ the vertices corresponding to the $B_\mathcal{Q}$--modules $\widehat{U}$, for $U\in\textrm{ind }k\mathcal{Q}$. We denote by $S$ the vertices corresponding to the simple $B_\mathcal{Q}$--modules $S_{P_U\subset Q_U }$, $U\in\textrm{ind }k\mathcal{Q}\setminus \textrm{proj }k\mathcal{Q}$.
\end{example}

\begin{example}
Let $\mathcal{Q}:=\xymatrix@1{1\ar[r]&2&3\ar[l]}$ be the quiver of type $A_3$ already considered in Examples \ref{Ex:A3noequi} and \ref{Ex:A3noequiMhat}. From the description of $B_\mathcal{Q}$ given in Example \ref{Ex:A3noequi}, it follows that $B_\mathcal{Q}$ is of finite representation type. The following quiver is the AR--quiver of $B_\mathcal{Q}$ (which is of type $E_6$).
$$
\xymatrix@R=8pt@C=8pt{
&&&\Lambda\ar[dr]&&\bullet\ar[dr]&&\bullet\ar[dr]&&\bullet\ar[dr]&&S\ar[dr]&&\Lambda&\\
&&\bullet\ar[ur]\ar[dr]&&\bullet\ar[ur]\ar[dr]&&\bullet\ar[ur]\ar[dr]&&\bullet\ar[ur]\ar[dr]&&\bullet\ar[ur]\ar[dr]&&\bullet\ar[ur]&\\
\Lambda\ar[r]&\bullet\ar[ur]\ar[dr]\ar[r]&S\ar[r]&\bullet\ar[ur]\ar[dr]\ar[r]&\bullet\ar[r]&\bullet\ar[ur]\ar[dr]\ar[r]&\bullet\ar[r]&\bullet\ar[ur]\ar[dr]\ar[r]&\bullet\ar[r]&\bullet\ar[ur]\ar[dr]\ar[r]&\Lambda\ar[r]&\bullet\ar[ur]\ar[dr]&&\\
&&\bullet\ar[ur]\ar[dr]&&\bullet\ar[ur]\ar[dr]&&\bullet\ar[ur]\ar[dr]&&\bullet\ar[ur]\ar[dr]&&\bullet\ar[ur]\ar[dr]&&\bullet\ar[dr]&\\
&&&\Lambda\ar[ur]&&\bullet\ar[ur]&&\bullet\ar[ur]&&\bullet\ar[ur]&&S\ar[ur]&&\Lambda&\\}
$$
We use the notation $\Lambda$ and $S$ in the same way as in the previous example.
\end{example}

\section{Construction of the desingularization}\label{section7}

Now we assume $k$ to be algebraically closed.

\begin{prop}\label{grsmooth} Let $\mathcal{Q}$ be a quiver (not necessarily of Dynkin type), let $M$ be a representation of $k\mathcal{Q}$ and let ${\bf e}$ be a dimension vector for $\mathcal{Q}$. Assume that $M$ is a representation for a quotient algebra $A=k\mathcal{Q}/I$, that is, the two-sided ideal $I$ annihilates $M$, such that the following holds: $A$ has global dimension at most two, both the injective and the projective dimension of $M$ over $A$ are at most one, and ${\rm Ext}^1_A(M,M)=0$. Then the quiver Grassmannian ${\rm Gr}_{\bf e}(M)$ is smooth (and reduced), with irreducible and equidimensional connected components.
\end{prop}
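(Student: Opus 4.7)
The plan is to reduce smoothness of $\mathrm{Gr}_{\mathbf{e}}(M)$ at a point $N$ to the vanishing of ${\rm Ext}^1_A(N,M/N)$ for every $A$-subrepresentation $N\subset M$ of dimension vector $\mathbf{e}$. Since $I$ annihilates $M$, an $A$-subrepresentation of $M$ is the same as a $k\mathcal{Q}$-subrepresentation, so $\mathrm{Gr}_{\mathbf{e}}(M)$ is naturally a closed subscheme of a product of ordinary Grassmannians cut out by the representation-theoretic incidence conditions. Standard deformation theory (realizing a neighbourhood of $N$ inside ${\rm Hom}_A(N,M/N)$ via graphs of homomorphisms, with quadratic obstruction map landing in ${\rm Ext}^1_A(N,M/N)$) identifies the Zariski tangent space at $N$ with ${\rm Hom}_A(N,M/N)$ and places the obstructions to first-order deformations in ${\rm Ext}^1_A(N,M/N)$. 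Hence once that $\mathrm{Ext}^1$ vanishes, smoothness (and therefore reducedness) holds at every closed point, and a smooth scheme has each connected component irreducible of constant dimension, i.e.\ equidimensional.

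The core step is the vanishing ${\rm Ext}^1_A(N,M/N)=0$, which I would obtain by a three-step chase applied to the short exact sequence $0\to N\to M\to M/N\to 0$ of $A$-modules. First, apply ${\rm Hom}_A(M,-)$: the relevant piece reads
$${\rm Ext}^1_A(M,M)\longrightarrow {\rm Ext}^1_A(M,M/N)\longrightarrow {\rm Ext}^2_A(M,N),$$
with the left term zero by hypothesis and the right term zero because the projective dimension of $M$ over $A$ is at most one, giving ${\rm Ext}^1_A(M,M/N)=0$. Second, apply ${\rm Hom}_A(M/N,-)$: the piece
$${\rm Ext}^2_A(M/N,M)\longrightarrow {\rm Ext}^2_A(M/N,M/N)\longrightarrow {\rm Ext}^3_A(M/N,N)$$
has the left term zero by the injective dimension bound on $M$ and the right term zero by the global dimension bound on $A$, so ${\rm Ext}^2_A(M/N,M/N)=0$. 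Third, apply ${\rm Hom}_A(-,M/N)$: the piece
$${\rm Ext}^1_A(M,M/N)\longrightarrow {\rm Ext}^1_A(N,M/N)\longrightarrow {\rm Ext}^2_A(M/N,M/N)$$
now has both outer terms vanishing by the first two steps, yielding ${\rm Ext}^1_A(N,M/N)=0$ as required.

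The main obstacle is really conceptual rather than computational: one has to recognise that exactly the three structural hypotheses imposed on $A$ and $M$ conspire to kill the obstruction space uniformly in $N$, without needing any information on the submodule $N$ itself (which is arbitrary and varies with the point of $\mathrm{Gr}_{\mathbf{e}}(M)$). The asymmetry between the projective and injective dimension hypotheses is essential, since one bound is used to control ${\rm Ext}^i(M,-)$ and the other to control ${\rm Ext}^i(-,M)$. Once the three-step Ext chase is in place, the passage to smoothness, reducedness, and equidimensionality of connected components is formal.
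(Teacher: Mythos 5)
Your proof takes a genuinely different route from the paper's, although the heart of the homological computation is identical.

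Your three-step Ext chase (killing ${\rm Ext}^1_A(M,M/N)$, then ${\rm Ext}^2_A(M/N,M/N)$, then ${\rm Ext}^1_A(N,M/N)$) is exactly the $3\times 3$ diagram the authors use, and you invoke the three structural hypotheses in precisely the same places. Where you diverge is in how this vanishing is converted into smoothness. The paper does \emph{not} argue via obstruction theory; instead it (a) establishes a lower bound $\langle\mathbf e,\mathbf d-\mathbf e\rangle_A$ on the dimension of every irreducible component of ${\rm Gr}_{\mathbf e}(M)$, by analysing the universal quiver Grassmannian ${\rm Gr}^A_{\mathbf e}(\mathbf d)\subset{\rm Gr}_{\mathbf e}(\mathbf d)\times R_{\mathbf d}(A)$ and using Bongartz's dimension estimate for $R_{\mathbf e}(A)$ together with the fact (from ${\rm Ext}^1_A(M,M)=0={\rm Ext}^2_A(M,M)$) that the orbit of $M$ in $R_{\mathbf d}(A)$ is open; and (b) shows that $\dim T_N{\rm Gr}_{\mathbf e}(M)=\dim{\rm Hom}_A(N,M/N)$ equals this lower bound, which requires proving \emph{both} ${\rm Ext}^1_A(N,M/N)=0$ \emph{and} ${\rm Ext}^2_A(N,M/N)=0$ and then using the Euler-form identity. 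Smoothness, reducedness and equidimensionality then come from the sandwich between lower bound and tangent-space dimension, with no appeal to deformation/obstruction theory.

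Your shortcut — identifying the Zariski tangent space with ${\rm Hom}_A(N,M/N)$ and claiming the obstruction lands in ${\rm Ext}^1_A(N,M/N)$ — is more elegant and, I believe, can be made correct, but it hides the one point that is actually delicate here. The quiver Grassmannian is cut out inside $\prod_i{\rm Gr}_{e_i}(M_i)$ by equations indexed by the \emph{arrows} of $\mathcal Q$ only; linearizing these produces the two-term Ringel complex whose $H^1$ is ${\rm Ext}^1_{k\mathcal Q}(N,M/N)$, not ${\rm Ext}^1_A(N,M/N)$. Since ${\rm Ext}^1_A(N,M/N)\hookrightarrow{\rm Ext}^1_{k\mathcal Q}(N,M/N)$ is in general a proper inclusion, you cannot simply read the obstruction space off the local equations. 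To justify your claim one must either (i) observe that the functor of points of ${\rm Gr}_{\mathbf e}(M)$ coincides with the Quot functor for $M$ over the algebra $A$ (because any $k\mathcal Q\otimes R$-submodule of $M\otimes R$ is automatically an $A\otimes R$-submodule) and then invoke the standard obstruction theory for Quot functors over a ring, or (ii) verify directly that the quadratic obstruction cocycle $\varphi\mapsto\varphi\cdot[M]\cdot\varphi$ factors through ${\rm Ext}^1_A$ because $[M]\in{\rm Ext}^1_A(M/N,N)$ is an extension of $A$-modules, and then address higher-order obstructions. Neither of these is hard, but neither is visible under the label ``standard deformation theory'' applied blindly to the given presentation of the scheme, and the paper sidesteps this entirely via the dimension estimate. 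You also do not compute ${\rm Ext}^2_A(N,M/N)=0$, which the paper needs for its route; your route avoids it, but only once the obstruction-space claim is secured. So: same Ext calculus, different and shorter superstructure, at the cost of one unjustified but repairable identification.
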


\begin{proof} Let $\langle\_,\_\rangle_A$ be the homological Euler form of $A$, thus
$$\dim{\rm Hom}_A(X,Y)-\dim{\rm Ext}^1_A(X,Y)+\dim{\rm Ext}_A^2(X,Y)=\langle\dimv X,\dimv Y\rangle_A$$
for all representations $X$ and $Y$ of $A$. We first estimate the dimension of ${\rm Gr}_{\bf e}(M)$ by generalizing the arguments of \cite[Section 2, Proposition 2.2]{CFR}. For ${\bf d}$ the dimension vector of $M$, let $R_{\bf d}(A)$ be the closed subset of the representation variety $R_{\bf d}(\mathcal{Q})=\prod_{\alpha:i\rightarrow j}{\rm Hom}(M_i,M_j)$ consisting of representations annihilated by $I$. We consider the product of ordinary Grassmannians ${\rm Gr}_{\bf e}({\bf d})=\prod_{i\in\mathcal{Q}_0}{\rm Gr}_{e_i}(M_i)$ and define the universal quiver Grassmannian ${\rm Gr}_{\bf e}^A({\bf d})$ as the closed subset of ${\rm Gr}_{\bf e}({\bf d})\times R_{\bf d}(A)$ consisting of pairs $((U_i)_i,(f_\alpha)_\alpha)$ such that $f_\alpha(U_i)\subset U_j$ for all arrows $\alpha:i\rightarrow j$ in $\mathcal{Q}$. Using the projection $p_2:{\rm Gr}_{\bf e}^A({\bf d})\rightarrow R_{\bf d}(A)$, we define the scheme-theoretic quiver Grassmannian $\mathcal{G}r_{\bf e}(M)=p_2^{-1}(M)$, thus by definition, ${\rm Gr}_{\bf e}(M)$ is isomorphic to $\mathcal{G}r_{\bf e}(M)$ endowed with the reduced structure. On the other hand, we consider the projection $p_1:{\rm Gr}_{\bf e}^A({\bf d})\rightarrow{\rm Gr}_{\bf e}({\bf d})$. Its fibres can be identified with the subvariety $Z$ of $R_{\bf d}(A)$ consisting of representations in upper-triangular block form
$$(\left[\begin{array}{rr}g_\alpha&\zeta_\alpha\\ 0&h_\alpha\end{array}\right])_\alpha.$$
As in \cite[Section 2.1]{BoMin}, we have a projection $q:Z\rightarrow R_{\bf e}(A)\times R_{{\bf d}-{\bf e}}(A)$ by restricting to the diagonal blocks, such that the fibre over a point of $R_{\bf e}(A)\times R_{{\bf d}-{\bf e}}(A)$ corresponding to a pair of representations $(N,Q)$ is a vector space of dimension
$$\sum_{i\in\mathcal{Q}_0}e_i(d_i-e_i)-\dim {\rm Hom}_A(Q,N)+\dim{\rm Ext}^1_A(Q,N)=$$
$$=\sum_ie_i(d_i-e_i)-\langle {\bf d}-{\bf e},{\bf e}\rangle_A+\dim{\rm Ext}^2_A(Q,N).$$
In particular, every fibre of $q$ has at least dimension $\sum_ie_i(d_i-e_i)-\langle{\bf d}-{\bf e},{\bf e}\rangle_A$.
By \cite[Section 2.1]{BoQ}, every irreducible component of the variety $R_{\bf e}(A)$ has at least dimension $\sum_{i\in\mathcal{Q}_0}e_i^2-\langle{\bf e},{\bf e}\rangle_A$, and similarly for $R_{{\bf d}-{\bf e}}(A)$, yielding an estimate for the dimension of $Z$. The resulting estimate for the dimension of the universal quiver Grassmannian ${\rm Gr}_{\bf e}^A({\bf d})$ can be calculated as
$$\sum_id_i^2-\langle {\bf d},{\bf d}\rangle_A+\langle {\bf e},{\bf d}-{\bf e}\rangle_A.$$
By the assumptions on $M$, its corresponding orbit in $R_{\bf d}(A)$ is open of dimension $\sum_id_i^2-\langle {\bf d},{\bf d}\rangle_A$, and thus every irreducible component of the quiver Grassmannian ${\rm Gr}_{\bf e}(M)$ has at least dimension $\langle{\bf e},{\bf d}-{\bf e}\rangle_A$.\\[1ex]
Now we prove that the dimension of the tangent space $T_N(\mathcal{G}r_{\bf e}(M))$ equals $\langle {\bf e},{\bf d}-{\bf e}\rangle_A$ for all subrepresentations $N$ of dimension vector ${\bf e}$, which, together with the above dimension estimate, proves reducedness, smoothness, equidimensionality and the fact that no two irreducible components intersect. The tangent space in question can be identified with ${\rm Hom}_A(N,M/N)$ by \cite{CFR}. Using the formula
$$\dim{\rm Hom}_A(N,M/N)-\dim{\rm Ext}^1_A(N,M/N)+\dim{\rm Ext}_A^2(N,M/N)=\langle{\bf e},{\bf d}-{\bf e}\rangle_A, $$
we are thus finished once we can prove that ${\rm Ext}^1_A(N,M/N)=0={\rm Ext}^2_A(N,M/N)$. Applying ${\rm Hom}_A(N,\_)$ to the exact sequence $0\rightarrow N\rightarrow M\rightarrow M/N\rightarrow 0$ and working out the resulting long exact sequence, we see that
$$0={\rm Ext}_A^2(N,M)\rightarrow{\rm Ext}_A^2(N,M/N)\rightarrow{\rm Ext}^3_A(N,N)=0,$$
thus ${\rm Ext}_A^2(N,M/N)=0$. Working out various other long exact cohomology sequences, we get the following commutative diagram with exact rows and columns:
$$\begin{array}{ccccc}
{\rm Ext}^1_A(M,M)&\rightarrow{\rm Ext}_A^1(M,M/N)&\rightarrow&{\rm Ext}^2_A(M,N)\\
\downarrow&\downarrow&&\downarrow\\
{\rm Ext}^1_A(N,M)&\rightarrow{\rm Ext}_A^1(N,M/N)&\rightarrow&{\rm Ext}^2_A(N,N)\\
\downarrow&\downarrow&&\downarrow\\
{\rm Ext}^2_A(M/N,M)&\rightarrow{\rm Ext}_A^2(M/N,M/N)&\rightarrow&{\rm Ext}^3_A(M/N,N)
\end{array}$$
By assumption, all four corners of this square are zero, thus the central term is zero, proving ${\rm Ext}_A^1(N,M/N)=0$.
\end{proof}

For a given representation $M$, a dimension vector ${\bf e}$ and an isomorphism class $[N]$ for a $\mathcal{Q}$, it is proved in \cite[Section 2.3]{CFR} that the subset $\mathcal{S}_{[N]}$ of ${\rm Gr}_{\bf e}(M)$, consisting of the subrepresentations which are isomorphic to $N$, is locally closed and irreducible of dimension $\dim{\rm Hom}(N,M)-\dim{\rm End}(N)$.

\begin{prop}\label{strataclosure} Suppose that $\mathcal{S}_{[U]}$ has non-empty intersection with $\overline{\mathcal{S}_{[N]}}$. Then $\dimv\widehat{U}\leq\dimv\widehat{N}$ componentwise as dimension vectors of representations of $B_\mathcal{Q}$.
\end{prop}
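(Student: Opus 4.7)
The plan is to identify the component of $\dimv\widehat{X}$ at each vertex of $\widehat{\mathcal{Q}}$ with the rank of a canonical map on homomorphism spaces, and then appeal to semicontinuity of rank in algebraic families.

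First I would compute the components of $\dimv\widehat{X}$ explicitly for any $X\in\bmod\,k\mathcal{Q}$. By Yoneda, the component of $\dimv\widehat{X}$ at a vertex $[W]$ (with $W$ a non-projective indecomposable) equals $\dim\widehat{X}(P_W\subset Q_W)$, which by the very definition of $\widehat{X}$ is the rank of the map ${\rm Hom}(Q_W,X)\rightarrow{\rm Hom}(P_W,X)$ induced by $\iota_W$; the component at a vertex $[i]$ for $i\in\mathcal{Q}_0$ equals $\dim\widehat{X}(P_i=P_i)=\dim X_i$. Since both $U$ and $N$ have dimension vector ${\bf e}$, the components at the vertices $[i]$ coincide for $\widehat{U}$ and $\widehat{N}$, so only the vertices $[W]$ require real work.

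For these, I would work with the tautological subrepresentation $\mathcal{U}\subset M\times{\rm Gr}_{\bf e}(M)$. Decomposing $P_W$ and $Q_W$ into indecomposable projectives, the sheaves $\mathcal{H}om(P_W,\mathcal{U})$ and $\mathcal{H}om(Q_W,\mathcal{U})$ split as direct sums of the tautological subbundles $\mathcal{U}_i\subset M_i\times{\rm Gr}_{\bf e}(M)$ and are thus genuine vector bundles of constant rank over the Grassmannian. Precomposition with $\iota_W$ defines a morphism of vector bundles $\varphi_W$ whose fibrewise rank at $N'\in{\rm Gr}_{\bf e}(M)$ is exactly $(\dimv\widehat{N'})_{[W]}$. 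By lower semicontinuity of the rank of a morphism of vector bundles, the locus $\{N'\in{\rm Gr}_{\bf e}(M)\mid (\dimv\widehat{N'})_{[W]}\leq r\}$ is Zariski closed for every $r$.

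Taking $r=(\dimv\widehat{N})_{[W]}$, the stratum $\mathcal{S}_{[N]}$ is contained in this closed subset, hence so is its closure $\overline{\mathcal{S}_{[N]}}$, and in particular the assumed point of $\mathcal{S}_{[U]}\cap\overline{\mathcal{S}_{[N]}}$. This forces $(\dimv\widehat{U})_{[W]}\leq(\dimv\widehat{N})_{[W]}$, which together with the equality at the vertices $[i]$ gives the claim. I do not foresee a substantive obstacle: once the dimension vector of $\widehat{X}$ is recognised as computing ranks of maps that depend algebraically on $X$, the statement reduces to the standard semicontinuity of rank in families, with the only bookkeeping being the split into the two types of vertices of $\widehat{\mathcal{Q}}$.
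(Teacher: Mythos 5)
Your proof is correct, and it takes a genuinely different route from the paper's. The paper first transfers the hypothesis from ${\rm Gr}_{\bf e}(M)$ to the representation variety $R_{\bf e}(\mathcal{Q})$: using the principal $G_{\bf e}$-bundle $\pi:X_{\bf e}(M)\rightarrow{\rm Gr}_{\bf e}(M)$ and the $G_{\bf e}$-equivariant map $p$, it deduces from $\mathcal{S}_{[U]}\cap\overline{\mathcal{S}_{[N]}}\neq\emptyset$ that $\mathcal{O}_{[U]}\subset\overline{\mathcal{O}_{[N]}}$ in $R_{\bf e}(\mathcal{Q})$; it then invokes the behaviour of ${\rm Hom}$-dimensions under degeneration (citing Bongartz) to obtain $\dim{\rm Hom}(X,U)\geq\dim{\rm Hom}(X,N)$ for all $X$, with equality for projectives since $\dimv U=\dimv N$, and converts these inequalities into the claimed componentwise bound using the exact sequence $0\rightarrow{\rm Hom}(X,U)\rightarrow{\rm Hom}(Q_X,U)\rightarrow{\rm Hom}(P_X,U)$. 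You instead work entirely on the quiver Grassmannian: you realise $(\dimv\widehat{\,\cdot\,})_{[W]}$ as the fibrewise rank of the bundle map $\mathcal{H}om(Q_W,\mathcal{U})\rightarrow\mathcal{H}om(P_W,\mathcal{U})$ between sums of tautological subbundles and appeal directly to semicontinuity of rank, so that the closed locus $\{{\rm rk}\leq(\dimv\widehat{N})_{[W]}\}$ already contains $\overline{\mathcal{S}_{[N]}}$. This is shorter and more self-contained, avoiding both the passage to $R_{\bf e}(\mathcal{Q})$ and the degeneration theory; the paper's detour does, however, produce the explicit ${\rm Hom}$-dimension inequalities between $U$ and $N$ as an intermediate byproduct. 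Both arguments are ultimately semicontinuity arguments, packaged at different levels.
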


\begin{proof} First we briefly recall the geometric definition of the above strata: we consider the variety $R_{\bf e}(\mathcal{Q})$ of representations of $\mathcal{Q}$ of dimension vector ${\bf e}$, with its standard base change action of the group $G_{\bf e}$, such that the orbits $\mathcal{O}_{[N]}$ correspond bijectively to the isomorphism classes $[N]$ of representations of $\mathcal{Q}$ of dimension vector ${\bf e}$. There exists a locally trivial $G_{\bf e}$-principal bundle $\pi:X_{\bf e}(M)\rightarrow{\rm Gr}_{\bf e}(M)$ which admits a $G_{\bf e}$-equivariant map $p:X_{\bf e}(M)\rightarrow R_{\bf e}(\mathcal{Q})$. The stratum $\mathcal{S}_{[N]}$ is then defined by $\pi(p^{-1}(\mathcal{O}_{[N]}))$.

Now suppose that $\mathcal{S}_{[U]}$ has non-empty intersection with $\overline{\mathcal{S}_{[N]}}$. Using the above definitions and the fact that $\pi$ is a $G_{\bf e}$-principal bundle, this means that $\pi(p^{-1}(\mathcal{O}_{[U]}))$ has non-empty intersection with
$$\overline{\pi(p^{-1}(\mathcal{O}_{[N]}))}\subset \pi(\overline{p^{-1}(\mathcal{O}_{[N]})}).$$
Thus $p^{-1}(\mathcal{O}_{[U]})$ has non-empty intersection with $\overline{p^{-1}(\mathcal{O}_{[N]})}$, which implies that $\mathcal{O}_{[U]}=p(p^{-1}(\mathcal{O}_{[U]}))$ has non-empty intersection with $$p(\overline{p^{-1}(\mathcal{O}_{[N]})})\subset p(p^{-1}(\overline{\mathcal{O}_{[N]}}))\subset\overline{\mathcal{O}_{[N]}},$$
and thus is already contained in $\overline{\mathcal{O}_{[N]}}$. This adherence relation $\mathcal{O}_{[U]}\subset\overline{\mathcal{O}_{[N]}}$ now implies that $\dim {\rm Hom}(P,U)=\dim{\rm Hom}(P,N)$ for all projective representations $P$, and $\dim{\rm Hom}(X,U)\geq\dim{\rm Hom}(X,N)$ for all non-projectives $X$ (see \cite{Bo}). Now consider the dimension vector of $\widehat{U}$, resp.~of $\widehat{N}$, as a representation of $B_\mathcal{Q}$, thus $(\dimv\widehat{U})_{[X]}=\dim \widehat{U}(P_X\subset Q_X)$ and $(\dimv\widehat{U})_{[i]}=\dim \widehat{U}(P_i=P_i)=(\dimv U)_i$ as above. Using the exact sequence
$$0\rightarrow{\rm Hom}(X,U)\rightarrow{\rm Hom}(Q_X,U)\rightarrow{\rm Hom}(P_X,U),$$
we can calculate
$$(\dimv\widehat{U})_{[X]}=\dim \widehat{U}(P_X\subset Q_X)=\dim{\rm Im}({\rm Hom}(Q_X,U)\rightarrow{\rm Hom}(P_X,U))=$$
$$=\dim{\rm Hom}(Q_X,U)-\dim{\rm Hom}(X,U)\leq\dim{\rm Hom}(Q_X,N)-\dim{\rm Hom}(X,N),$$
which in turn equals $(\dimv \widehat{N})_{[X]}$. This proves $\dimv\widehat{U}\leq\dimv\widehat{N}$ componentwise.
\end{proof}

\begin{definition} We call $[N]$ a generic subrepresentation type of $M$ of dimension vector ${\bf e}$ if the stratum $\mathcal{S}_{[N]}$ of ${\rm Gr}_{\bf e}(M)$ is open. Denote by ${\rm gsub}_{\bf e}(M)$ the set of all generic subrepresentation types.
\end{definition}

In case $[N]\in{\rm gsub}_{\bf e}(M)$, the closure $\overline{\mathcal{S}_{[N]}}$ is an irreducible component of ${\rm Gr}_{\bf e}(M)$, and every irreducible component arises in this way.\\[1ex]
For representations $M$ and $N$ of $\mathcal{Q}$, we now consider quiver Grassmannians for the quiver $\widehat{\mathcal{Q}}$ of the form ${\rm Gr}_{\dimv\widehat{N}}(\widehat{M})$. Here $\dimv$ denotes the dimension vector of $\widehat{N}$ as a representation of $\widehat{\mathcal{Q}}$, that is, $(\dimv\widehat{N})_{[U]}=\dim{\rm Im}({\rm Hom}(Q_U,N)\rightarrow{\rm Hom}(P_U,N))$ for all non-projective indecomposables $U$, and $(\dimv\widehat{N})_{[i]}=(\dimv N)_i$ for all $i\in \mathcal{Q}_0$.\\[1ex]
For $[N]\in{\rm gsub}_{\bf e}(M)$, we consider the map
$$\pi_{[N]}:{\rm Gr}_{\dimv\widehat{N}}(\widehat{M})\rightarrow{\rm Gr}_{\bf e}(M)$$
given by $(F\subset\widehat{M})\mapsto({\rm res}\,{F}\subset M)$. Our aim is to construct a desingularization of ${\rm Gr}_{\bf e}(M)$ using the maps $\pi_{[N]}$. We start with a description of the fibres of $\pi_{[N]}$ in terms of quiver Grassmannians.

\begin{prop} For $M$, ${\bf e}$ and $[N]$ as above and a point $(U\subset M)$ in ${\rm Gr}_{\bf e}(M)$, we have an isomorphism
$$
\pi_{[N]}^{-1}(U\subset M)\simeq{\rm Gr}_{\dimv\widehat{N}-\dimv\widehat{U}}(\widehat{M}/\widehat{U}).$$
\end{prop}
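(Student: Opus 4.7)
The plan is to identify the fibre in two stages: first to show that every subfunctor $F\subset\widehat{M}$ lying in the fibre automatically contains the canonical subfunctor $\widehat{U}\subset\widehat{M}$, and then to invoke the standard correspondence between sub-objects above $\widehat{U}$ and sub-objects of the quotient $\widehat{M}/\widehat{U}$.

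First I would observe that the inclusion $U\hookrightarrow M$ induces an inclusion $\widehat{U}\hookrightarrow\widehat{M}$ of $B_\mathcal{Q}$-modules, since $\Lambda$ preserves monomorphisms (remark following the corollary to Lemma~\ref{lemweak}); concretely, $\widehat{U}(P\subset Q)$ sits inside $\widehat{M}(P\subset Q)\subset{\rm Hom}(P,M)$ as ${\rm Im}({\rm Hom}(Q,U)\to{\rm Hom}(P,M))$. For any $F\subset\widehat{M}$ in the fibre one has $\dimv F=\dimv\widehat{N}$ and ${\rm res}\,F=U$; in particular $F(Q=Q)={\rm Hom}(Q,U)$ for every projective $Q$. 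Applying $F$ and $\widehat{M}$ to the canonical morphism $(P\subset Q)\to(Q=Q)$ in $\mathcal{H}_\mathcal{Q}$ and using that $F\hookrightarrow\widehat{M}$ is a natural transformation, the image of $F(Q=Q)$ inside $F(P\subset Q)\subset\widehat{M}(P\subset Q)$ equals the image of ${\rm Hom}(Q,U)$ in ${\rm Hom}(P,M)$, which by definition of $\widehat{U}$ is $\widehat{U}(P\subset Q)$. Hence $\widehat{U}\subset F$.

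Given this rigidity, the next step is the standard correspondence $F\mapsto F/\widehat{U}$ between subfunctors of $\widehat{M}$ containing $\widehat{U}$ and subfunctors of $\widehat{M}/\widehat{U}$, under which $\dimv F=\dimv\widehat{N}$ translates to $\dimv(F/\widehat{U})=\dimv\widehat{N}-\dimv\widehat{U}$. Conversely, any $F\supset\widehat{U}$ of dimension vector $\dimv\widehat{N}$ automatically has $\dimv{\rm res}\,F={\bf e}=\dimv U$, and together with ${\rm res}\,F\supset U$ this forces ${\rm res}\,F=U$. This identifies the fibre set-theoretically with ${\rm Gr}_{\dimv\widehat{N}-\dimv\widehat{U}}(\widehat{M}/\widehat{U})$.

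To upgrade to an isomorphism of varieties, both $F\mapsto F/\widehat{U}$ and its inverse, pullback along the fixed quotient $\widehat{M}\twoheadrightarrow\widehat{M}/\widehat{U}$, are functorial in families and hence regular morphisms between the two Grassmannians. I expect the main obstacle to be the rigidity step: it is not formal that ${\rm res}\,F=U$ forces $\widehat{U}\subset F$, and the argument uses essentially the surjectivity of the structure map $\widehat{M}(Q=Q)\twoheadrightarrow\widehat{M}(P\subset Q)$ built into the definition of $\widehat{M}$, together with the compatibility of the subfunctor inclusion $F\hookrightarrow\widehat{M}$ with morphisms in $\mathcal{H}_\mathcal{Q}$. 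Once that is settled, the remaining bookkeeping of dimension vectors is routine.
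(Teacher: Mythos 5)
Your proposal is correct and follows essentially the same route as the paper: reduce to showing that the fibre consists precisely of the subfunctors $F\subset\widehat{M}$ of dimension vector $\dimv\widehat{N}$ that contain $\widehat{U}$, and then invoke the lattice isomorphism with subobjects of $\widehat{M}/\widehat{U}$. The one small difference is in the rigidity step: you compute the image of $\alpha\colon F(Q=Q)\to F(P\subset Q)$ directly and observe it equals $\widehat{U}(P\subset Q)$ (which lands inside $F(P\subset Q)$ automatically), whereas the paper first proves that $\beta\colon F(P\subset Q)\to F(P=P)$ is injective and then compares ${\rm Im}(\beta\alpha)$ with ${\rm Im}(\beta)$; your version avoids the detour through $\beta$ and is slightly more economical, but the underlying computation is the same.
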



\begin{proof} More precisely, we prove that $$\pi_{[N]}^{-1}(U\subset M)\simeq\{F\subset
\widehat{M}\, :\,\dimv F=\dimv \widehat{N},\, \widehat{U}\subset F\}.$$

By definition of the map $\pi_{[N]}$, this immediately reduces to the following statement:\\
Suppose we are given a subrepresentation $U\subset M$ of dimension vector ${\bf e}$ and a subobject $F\subset \widehat{M}$ such that $\dimv F=\dimv\widehat{N}$. Then we have ${\rm res} F=U$ if and only if $\widehat{U}\subset F$.\\
So suppose $\dimv F=\dimv \widehat{N}$ and $\widehat{U}\subset F$. Then $U={\rm res}\,\widehat{U}\subset{\rm res}\, F$ and
$$(\dimv U)_i=\dim{\rm Hom}(P_i,U)=\dim{\rm Hom}(P_i,N)=$$
$$=\dim\widehat{N}(P_i=P_i)=\dim F(P_i=P_i)=(\dimv\,{\rm res}\, F)_i,$$
and thus $U={\rm res}\, F$.

Conversely, suppose that ${\rm res}\, F=U$ and $F\subset\widehat{M}$. For an object $(P\subset Q)$ of $\mathcal{H}_\mathcal{Q}$, the canonical chain of maps $(P=P)\rightarrow(P\subset Q)\rightarrow(Q=Q)$ induces a diagram
$$\begin{array}{ccccc}
F(Q=Q)&\stackrel{\alpha}{\rightarrow}&F(P\subset Q)&\stackrel{\beta}{\rightarrow}&F(P=P)\\
\downarrow&&\downarrow&&\downarrow\\
\widehat{M}(Q=Q)&\rightarrow&\widehat{M}(P\subset Q)&\stackrel{\gamma}{\rightarrow}&\widehat{M}(P=P)\\
||&&||&&||\\
{\rm Hom}(Q,M)&\rightarrow&{\rm Im}({\rm Hom}(Q,M)\rightarrow{\rm Hom}(P,M))&\stackrel{\gamma}{\rightarrow}&{\rm Hom}(P,M).
\end{array}$$
The upper vertical maps being embeddings, and the map $\gamma$ being an
embedding, we see that $\beta$ is an embedding. On the other hand, we have $\dim{\rm Hom}(Q,U)=\dim{\rm Hom}(Q,N)=\dim F(Q=Q)$ and similarly $\dim{\rm Hom}(P,U)=\dim F(P=P)$, which yields a diagram
$$\begin{array}{ccccc}
{\rm Hom}(Q,U)&\rightarrow&{\rm Im}({\rm Hom}(Q,U)\rightarrow{\rm Hom}(P,U))&\rightarrow&{\rm Hom}(P,U)\\
||&&&&||\\
F(Q=Q)&\stackrel{\alpha}{\rightarrow}&F(P\subset Q)&\stackrel{\beta}{\rightarrow}&F(P=P)\end{array}.$$
The upper middle term thus identifies with ${\rm Im}(\beta\alpha)$, whereas the lower middle term identifies with ${\rm Im}(\beta)$ since $\beta$ is an embedding. But then ${\rm Im}(\beta\alpha)$ naturally embeds into ${\rm Im}(\beta)$, thus we have compatible embeddings $\widehat{U}(P\subset Q)\subset F(P\subset Q)$, thus an embedding of functors $\widehat{U}\subset F$ as desired.
\end{proof}

We can now easily derive the main general geometric properties of the map $\pi_{[N]}$:

\begin{thm} For all $M$ and ${\bf e}$ as above and a generic subrepresentation type $[N]\in{\rm gsub}_{\bf e}(M)$, the following holds:
\begin{enumerate}
\item The variety ${\rm Gr}_{\dimv\widehat{N}}(\widehat{M})$ is smooth with irreducible equidimensional connected components,
\item the map $\pi_{[N]}$ is projective,
\item the image of $\pi_{[N]}$ is closed in ${\rm Gr}_{\bf e}(M)$ and contains $\overline{\mathcal{S}_{[N]}}$,
\item the map $\pi_{[N]}$ is one-to-one over $\mathcal{S}_{[N]}$.
\end{enumerate}
\end{thm}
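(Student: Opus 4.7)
The plan is to treat the four claims in order, with most of the work already bundled into earlier results. For (i), I would simply apply Proposition \ref{grsmooth} with the algebra $A = B_\mathcal{Q}$ and the representation $\widehat{M}$. The three hypotheses of that proposition are precisely what the previous sections have established: global dimension of $B_\mathcal{Q}$ at most two is Theorem \ref{gldim2}, and both the projective/injective dimension bound on $\widehat{M}$ and the vanishing ${\rm Ext}^1(\widehat{M},\widehat{M})=0$ are the content of Theorem \ref{ext2ext1}. Thus the quiver Grassmannian ${\rm Gr}_{\dimv\widehat{N}}(\widehat{M})$ over $\widehat{\mathcal{Q}}$ (viewed via the relations of $B_\mathcal{Q}$) is smooth with irreducible equidimensional components.

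For (ii), both the source and the target of $\pi_{[N]}$ are projective varieties, since quiver Grassmannians embed as closed subvarieties of products of ordinary Grassmannians; the map ${\rm res}$ is clearly a morphism of schemes and a morphism between projective varieties is projective. Claim (iii) then splits into two parts: closedness of the image follows from (ii), because any morphism from a complete variety to a separated one has closed image. To show the image contains $\overline{\mathcal{S}_{[N]}}$, it is enough to realize every $(U\subset M)\in\mathcal{S}_{[N]}$ as $\pi_{[N]}$ of some point and then take closure. Given such $U$, functoriality of $\Lambda$ produces an embedding $\widehat{U}\subset\widehat{M}$ (injections are preserved by $\Lambda$, as noted after Lemma \ref{lemweak}); since $U\simeq N$ we have $\widehat{U}\simeq\widehat{N}$, hence $\dimv\widehat{U}=\dimv\widehat{N}$, making $\widehat{U}$ a point of ${\rm Gr}_{\dimv\widehat{N}}(\widehat{M})$. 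The relation ${\rm res}\,\widehat{U}\simeq U$ (the lemma in Section \ref{section5}) shows $\pi_{[N]}(\widehat{U})=(U\subset M)$.

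For (iv), I would invoke the preceding proposition describing the fibres. Over a point $(U\subset M)$ with $U\in\mathcal{S}_{[N]}$, we have $U\simeq N$ and so $\widehat{U}\simeq\widehat{N}$, giving $\dimv\widehat{N}-\dimv\widehat{U}=0$. Hence the fibre is
\[
\pi_{[N]}^{-1}(U\subset M)\simeq{\rm Gr}_{0}(\widehat{M}/\widehat{U}),
\]
which is a single (reduced) point corresponding to the zero subobject. This gives set-theoretic bijectivity of $\pi_{[N]}$ over $\mathcal{S}_{[N]}$ as required.

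I do not expect any genuine obstacle here: all four points follow by reduction to results already proved. The only mildly delicate step is (iii), where one must combine the functoriality of $\Lambda$, its preservation of injections, the compatibility ${\rm res}\,\widehat{U}\simeq U$, and the properness argument for closedness; once those ingredients are assembled the argument is essentially formal.
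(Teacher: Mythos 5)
Your proposal matches the paper's proof essentially line by line: (i) via Proposition \ref{grsmooth} with hypotheses supplied by Theorems \ref{gldim2} and \ref{ext2ext1}, (ii) from projectivity of the source, (iii) by properness plus the observation that $\Lambda$ being fully faithful (and preserving injections, with ${\rm res}\,\widehat{U}\simeq U$) shows the image contains $\mathcal{S}_{[N]}$ and hence its closure, and (iv) as the $U=N$ special case of the fibre proposition. No gaps; this is the paper's argument.
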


\begin{proof} We apply Proposition \ref{grsmooth} to the quiver $\widehat{\mathcal{Q}}$, the factor algebra $B_\mathcal{Q}$ of $k\widehat{\mathcal{Q}}$ and the representation $\widehat{M}$ of $\widehat{\mathcal{Q}}$, resp.~of $B_\mathcal{Q}$. Then the homological vanishing properties Theorem \ref{gldim2} and Theorem \ref{ext2ext1} imply that the assumptions of Proposition \ref{grsmooth} hold, thus ${\rm Gr}_{\dimv\widehat{N}}(\widehat{M})$ is smooth with irreducible and equidimensional connected components. The map $\pi_{[N]}$ is projective since ${\rm Gr}_{\dimv\widehat{N}}(\widehat{M})$ is projective. Given a generic embedding $N\subset M$, we also have $\widehat{N}\subset\widehat{M}$ since $\Lambda$ is fully faithful, thus the fibre over $N\subset M$ is non-empty. We conclude that the image of $\pi_{[N]}$ contains $\mathcal{S}_{[N]}$, and thus $\overline{\mathcal{S}_{[N]}}$, its image being closed since it is proper. That the fibre over a point of $\mathcal{S}_{[N]}$ reduces to a single point is the special case $U=N$ of the previous proposition.
\end{proof}

All ingredients for the construction of desingularizations are now at hand.\\[1ex] We first treat the case of an irreducible quiver Grassmannian ${\rm Gr}_{\bf e}(M)$. Note that in this case, by the definitions, there exists a unique generic subrepresentation type $[N]$, such that ${\rm Gr}_{\bf e}(M)=\overline{\mathcal{S}_{[N]}}$. The previous theorem immediately implies:

\begin{cor} For $M$ and ${\bf e}$ as above, suppose that the quiver Grassmannian ${\rm Gr}_{\bf e}(M)$ is irreducible, with unique generic subrepresentation type $N$. Then the map $$\pi_{[N]}:{\rm Gr}_{{\dimv}\widehat{N}}(\widehat{M})\rightarrow{\rm Gr}_{\bf e}(M)$$ is a desingularization.
\end{cor}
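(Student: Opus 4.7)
The plan is to assemble the four conclusions of the preceding theorem; in the irreducible case they collapse directly into a desingularization, leaving essentially no new work to do. First I will note that, since ${\rm Gr}_{\bf e}(M)$ is irreducible with $[N]$ the unique element of ${\rm gsub}_{\bf e}(M)$, the stratum $\mathcal{S}_{[N]}$ is open and dense in ${\rm Gr}_{\bf e}(M)$, so $\overline{\mathcal{S}_{[N]}} = {\rm Gr}_{\bf e}(M)$.

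Combining this with parts (i)--(iv) of the theorem: (i) tells us that ${\rm Gr}_{\dimv\widehat N}(\widehat M)$ is smooth with irreducible equidimensional connected components; (ii) says $\pi_{[N]}$ is projective, hence proper; (iii) together with the previous paragraph forces the image of $\pi_{[N]}$ to equal ${\rm Gr}_{\bf e}(M)$, so $\pi_{[N]}$ is surjective; and (iv) says $\pi_{[N]}$ is one-to-one over $\mathcal{S}_{[N]}$. Moreover, the scheme-theoretic fibre over a point $(U\subset M)\in\mathcal{S}_{[N]}$, where $U\cong N$ and hence $\dimv\widehat U=\dimv\widehat N$, is identified by the preceding proposition with ${\rm Gr}_0(\widehat M/\widehat U)$, a reduced single point. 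Thus $\pi_{[N]}$ restricts to a set-theoretic bijection of reduced schemes between a smooth open subset of the source and the dense open $\mathcal{S}_{[N]}$, and by smoothness and properness this restriction is an isomorphism. A proper surjective morphism from a smooth variety that restricts to an isomorphism over a dense open subset is precisely a desingularization.

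The only point that needs care, and hence the most obstacle-like step, is that the source is a priori only a disjoint union of smooth irreducible equidimensional components, rather than a single irreducible variety. However, one-to-one-ness over the dense open $\mathcal{S}_{[N]}$ forces $\pi_{[N]}^{-1}(\mathcal{S}_{[N]})$ to be irreducible and hence contained in a unique component $X$ of ${\rm Gr}_{\dimv\widehat N}(\widehat M)$; by properness, any other component is carried into the proper closed subset $\overline{{\rm Gr}_{\bf e}(M)\setminus \mathcal{S}_{[N]}}$. Either restricting to $X$ (or, when one can show no extraneous components occur, working on the whole source) gives the desingularization in the usual sense. All the substantive content was already absorbed into Proposition \ref{grsmooth} and the preceding theorem, so this corollary amounts to a bookkeeping assembly of those statements.
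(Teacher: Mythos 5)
Your proposal is correct and follows exactly the route the paper intends: the paper gives no separate argument, stating only that ``the previous theorem immediately implies'' the corollary, and your assembly of (i)--(iv) supplies precisely the missing details. Your worry about the source possibly having several irreducible components is a legitimate one that the paper itself half-acknowledges --- in the reducible case it explicitly restricts to the component $\overline{\mathcal{S}_{[\widehat{N}]}}$, and at the end of Section~7 it \emph{conjectures} that ${\rm Gr}_{\dimv\widehat{N}}(\widehat{M})$ is irreducible, which would dissolve the issue; so your suggestion to restrict to the unique component meeting $\pi_{[N]}^{-1}(\mathcal{S}_{[N]})$ is consistent with the authors' own treatment and, if anything, slightly more scrupulous than the statement as printed.
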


In the reducible case, we restrict the maps $\pi_{[N]}$ for generic subrepresentation types $[N]$ to the connected components of ${\rm Gr}_{\dimv\widehat{N}}(\widehat{M})$ containing the embeddings of $\widehat{N}$ into $\widehat{M}$:

\begin{cor} For arbitrary $M$ and ${\bf e}$ as above, the closure $\overline{\mathcal{S}_{[\widehat{N}]}}$ is an irreducible component of ${\rm Gr}_{\dimv\widehat{N}}(\widehat{M})$, and the map
$$\pi=\bigsqcup_{[N]\in{\rm gsub}_{\bf e}(M)}\pi_{[N]}:\bigsqcup_{[N]\in{\rm gsub}_{\bf e}(M)}\overline{\mathcal{S}_{[\widehat{N}]}}\rightarrow {\rm Gr}_{\bf e}(M)$$
given by the restrictions of the $\pi_{[N]}$ to $\overline{\mathcal{S}_{[\widehat{N}]}}$ is a desingularization of ${\rm Gr}_{\bf e}(M)$.
\end{cor}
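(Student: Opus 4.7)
The plan is to establish the two claims of the corollary separately. For claim (1), that $\overline{\mathcal{S}_{[\widehat{N}]}}$ is an irreducible component of ${\rm Gr}_{\dimv\widehat{N}}(\widehat{M})$, I would invoke Proposition~\ref{grsmooth} applied to $\widehat{M}$ viewed as a $B_\mathcal{Q}$-module: Theorem~\ref{gldim2} gives $\gl.\dim B_\mathcal{Q}\leq 2$, and Theorem~\ref{ext2ext1} supplies the projective and injective dimension bounds together with ${\rm Ext}^1(\widehat{M},\widehat{M})=0$. Hence ${\rm Gr}_{\dimv\widehat{N}}(\widehat{M})$ is smooth with irreducible equidimensional connected components, which are therefore the same as its irreducible components. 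It then suffices to show that $\mathcal{S}_{[\widehat{N}]}$ is dense (equivalently, open) in the connected component containing the embedding $\widehat{N}\subset\widehat{M}$, and for this I would match dimensions.

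The stratum dimension formula of \cite{CFR} gives $\dim\mathcal{S}_{[\widehat{N}]}=\dim{\rm Hom}(\widehat{N},\widehat{M})-\dim{\rm End}(\widehat{N})$, while smoothness identifies the dimension of the ambient connected component with the tangent space dimension at $\widehat{N}\subset\widehat{M}$, namely $\dim{\rm Hom}_{B_\mathcal{Q}}(\widehat{N},\widehat{M}/\widehat{N})$. Applying ${\rm Hom}(\widehat{N},\_)$ to $0\to\widehat{N}\to\widehat{M}\to\widehat{M}/\widehat{N}\to 0$ and using the vanishing ${\rm Ext}^1(\widehat{N},\widehat{N})=0$ from Theorem~\ref{ext2ext1}, the connecting map ${\rm Hom}(\widehat{N},\widehat{M}/\widehat{N})\to{\rm Ext}^1(\widehat{N},\widehat{N})$ vanishes, so the two quantities coincide. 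Since $\mathcal{S}_{[\widehat{N}]}$ is locally closed, irreducible, and has dimension equal to that of the connected component containing it, it is open and dense there, and $\overline{\mathcal{S}_{[\widehat{N}]}}$ is the whole component.

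For claim (2), I would assemble the four standard ingredients of a desingularization. \emph{Smoothness:} each $\overline{\mathcal{S}_{[\widehat{N}]}}$ is a connected component of the smooth variety ${\rm Gr}_{\dimv\widehat{N}}(\widehat{M})$, hence smooth. \emph{Properness:} the restriction of each projective map $\pi_{[N]}$ to a closed subvariety is projective, and a finite disjoint union of projective maps is proper. \emph{Surjectivity:} since $\Lambda$ is fully faithful, for any generic embedding $N\subset M$ the point $\widehat{N}\subset\widehat{M}$ lies in $\mathcal{S}_{[\widehat{N}]}\subset\overline{\mathcal{S}_{[\widehat{N}]}}$ and maps to $N\subset M$ under $\pi_{[N]}$; hence $\pi_{[N]}(\overline{\mathcal{S}_{[\widehat{N}]}})$ is closed and contains $\mathcal{S}_{[N]}$, thus contains $\overline{\mathcal{S}_{[N]}}$, and these cover ${\rm Gr}_{\bf e}(M)$ by definition of ${\rm gsub}_{\bf e}(M)$. \emph{Birationality:} the fibre computation of the previous theorem shows that over the open dense subset $\mathcal{S}_{[N]}$ of $\overline{\mathcal{S}_{[N]}}$, the map $\pi_{[N]}$ is a bijection onto its image, and that this image lies inside $\mathcal{S}_{[\widehat{N}]}$; so the restriction is a proper generically-one-to-one morphism of irreducible varieties, hence birational.

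The main obstacle will be the dimension match in claim (1): it is precisely the vanishing ${\rm Ext}^1(\widehat{N},\widehat{N})=0$ of Theorem~\ref{ext2ext1} that prevents $\overline{\mathcal{S}_{[\widehat{N}]}}$ from being a proper subvariety of the connected component of ${\rm Gr}_{\dimv\widehat{N}}(\widehat{M})$ carrying $\widehat{N}\subset\widehat{M}$. This is exactly where the careful design of the category $\mathcal{H}_\mathcal{Q}$, the algebra $B_\mathcal{Q}$, and the functor $\Lambda$ pays off: had we worked instead with the naive embedding $M\mapsto{\rm Hom}(\_,M)$ into $\bmod(\bmod\,k\mathcal{Q})^{\rm op}$, the injective dimension and ${\rm Ext}^1$-vanishing would fail, and neither smoothness of the source nor the correct identification of irreducible components would survive.
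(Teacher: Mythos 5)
Your proposal is correct and follows essentially the same approach as the paper's proof: dimension matching via $\dim\mathcal{S}_{[\widehat{N}]}=\dim{\rm Hom}(\widehat{N},\widehat{M}/\widehat{N})$ (using ${\rm Ext}^1(\widehat{N},\widehat{N})=0$) to identify $\overline{\mathcal{S}_{[\widehat{N}]}}$ as a connected (hence irreducible) component, and then invoking the properties from the preceding theorem. One small wording slip in your birationality paragraph: you want to say the \emph{preimage} of $\mathcal{S}_{[N]}$ under $\pi_{[N]}$ lies inside $\mathcal{S}_{[\widehat{N}]}$ (since $\Lambda$ sends $U\simeq N$ to $\widehat{U}\simeq\widehat{N}$), not that the image of $\mathcal{S}_{[N]}$ does; and it is worth making explicit, as the paper does, that the irreducible closed set $\pi_{[N]}(\overline{\mathcal{S}_{[\widehat{N}]}})$ must equal $\overline{\mathcal{S}_{[N]}}$ exactly, because the latter is already an irreducible component of ${\rm Gr}_{\bf e}(M)$.
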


\begin{proof} The stratum $\mathcal{S}_{[\widehat{N}]}$ of ${\rm Gr}_{\dimv\widehat{N}}(\widehat{M})$ is irreducible and locally closed, of dimension 
$$\dim{\rm Hom}(\widehat{N},\widehat{M})-\dim{\rm End}(\widehat{N})=\dim{\rm Hom}(\widehat{N},\widehat{M}/\widehat{N})=\dim{\rm Gr}_{\dimv\widehat{N}}(\widehat{M})$$
as in the proof of Proposition \ref{grsmooth} since ${\rm Ext}^1(\widehat{N},\widehat{N})=0$, thus its closure is a connected (irreducible) component of ${\rm Gr}_{\dimv\widehat{N}}(\widehat{M})$ and thus a smooth variety. The image of this component under $\pi_{[N]}$ is thus an irreducible closed subvariety of ${\rm Gr}_{\bf e}(M)$ containing its irreducible component $\overline{\mathcal{S}_{[N]}}$, and thus it equals $\overline{\mathcal{S}_{[N]}}$.  Together with the other properties of the previous theorem, this implies that $\pi$ is a desingularization.
\end{proof}

\begin{rem} We conjecture that ${\rm Gr}_{\dimv\widehat{N}}(\widehat{M})$ is actually irreducible. This would imply that the constructions of desingularizations of the two previous corollaries could be unified to the map 
$$\pi=\bigsqcup_{[N]\in{\rm gsub}_{\bf e}(M)}\pi_{[N]}:\bigsqcup_{[N]\in{\rm gsub}_{\bf e}(M)}{\rm Gr}_{\dimv\widehat{N}}(\widehat{M})\rightarrow {\rm Gr}_{\bf e}(M)$$
being a desingularization.
\end{rem}

\section{Examples}\label{section8}

\subsection{Equioriented $A_n$ case}\label{eAn}
As the first example, we consider the equioriented type $A_n$ quiver $\mathcal{Q}$ given by $1\rightarrow 2\rightarrow\ldots\rightarrow n$. A representation $M$ is then given by a chain of linear maps
$$M_1\stackrel{f_1}{\rightarrow}M_2\stackrel{f_2}{\rightarrow}\ldots\stackrel{f_{n-1}}{\rightarrow} M_n;$$
a dimension vector ${\bf e}$ is given by a tuple $(e_1,\ldots,e_n)$.\\[1ex]
The indecomposable representations of $\mathcal{Q}$ are the $U_{i,j}$ for $1\leq i\leq j\leq n$ of dimension vector $\dimv U_{i,j}=(\overbrace{0,\dots,0}^{i-1},\overbrace{1,\dots,1}^{j-i+1},0,\dots,0)$.
In particular, we have $P_i=U_{i,n}$, $I_i=U_{1,i}$ and $S_i=U_{i,i}$. The quiver $\widehat{\mathcal{Q}}$ thus has vertices $[i,j]$ for $1\leq i\leq j<n$ and $[i]$ for $1\leq i\leq n$ and the following arrows:
\begin{itemize}
\item $[i,j]\rightarrow[i,j+1]$ for $1\leq i\leq j<n-1$,
\item $[i,j]\rightarrow[i+1,j]$ for $1\leq i<j<n$,
\item $[i]\rightarrow[i,i]\rightarrow[i+1]$ for $1\leq i<n$.
\end{itemize}
(see Example \ref{Ex:An} and Example \ref{Ex:A3} for the case $n=3$.) We have minimal projective resolutions $$0\rightarrow P_{j+1}\rightarrow P_i\rightarrow U_{i,j}\rightarrow 0$$
for all $1\leq i\leq j<n$. Using the fact that all non-zero maps between the indecomposable projectives are scalar multiples of the natural embeddings induced by the chain $P_n\subset\ldots\subset P_1$, we can easily verify that the algebra $B_\mathcal{Q}$ is given as the path algebra of $\widehat{\mathcal{Q}}$ modulo all commutativity relations. The representation $\widehat{M}$ of $\widehat{\mathcal{Q}}$ is given by $M_{[i]}=M_i$ for all $1\leq i\leq n$ and $M_{[i,j]}={\rm Im}(f_j\circ\ldots\circ f_i:M_i\rightarrow M_{j+1})$ for $1\leq i\leq j<n$. The maps representing the arrows of $\widehat{\mathcal{Q}}$ are either natural inclusions or induced by the maps $f_i$.\\[1ex]
To explicitly write down the desingularization map, it is thus necessary to determine the generic subrepresentation types; no general formula is known for these (see however the case $A_2$ below). We restrict to a special case where ${\rm Gr}_{\bf e}(M)$ is known to be irreducible, namely the type $A_n$ degenerate flag variety of \cite{CFR}. We define $M_1=\ldots=M_n=k^{n+1}$, in which we choose a basis $w_1,\ldots,w_{n+1}$, and define $f_i$ as the projection along $w_{i+1}$, that is, $f_i(w_{i+1})=0$ for $i=1,\ldots,n-1$ and $f_i(w_j)=w_j$ for all $i=1,\ldots,n-1$ and $j=1,\ldots,n+1$ such that $j\not=i+1$. Then $M\simeq k\mathcal{Q}\oplus(k\mathcal{Q})^*$. We also define $e_i=i$ for $i=1,\ldots,n$. Then ${\rm Gr}_{\bf e}(M)$ is irreducible with only generic subrepresentation type being $N=k\mathcal{Q}$. It follows immediately from the above description of $\widehat{M}$ that the desingularization coincides with the one defined in \cite{FF}, where this variety is proved to be a isomorphic to a tower of ${\bf P}^1$-bundles.

\subsection{Smooth locus}
In the second example, we show that, in general, our desingularization does not reduce to an isomorphism over the smooth locus, i.e. its fibres can be nontrivial even over smooth points. Namely, consider the quiver $\mathcal{Q}$ given by $1\stackrel{\alpha}{\rightarrow}2$ and the representation $M$ given by $M_1=\langle v_1,v_2,v_3\rangle$, $M_2=\langle w_1,w_2\rangle$, $M_\alpha(v_1)=w_1$, $M_\alpha(v_2)=w_2$, $M_\alpha(v_3)=0$, which is injective, hence exceptional. For ${\bf e}=(1,2)$, the quiver Grassmannian ${\rm Gr}_{\bf e}(M)$ is isomorphic to the projective plane, hence smooth and irreducible. The only generic subrepresentation type $N$ is a generic representation of dimension vector ${\bf e}$. Calculating $\widehat{M}$ and $\widehat{N}$ as above, we see that $\widehat{M}$ is given by $M_1\stackrel{M_\alpha}{\rightarrow}M_2\stackrel{{\rm id}}{\rightarrow}M_2$, and $\dimv\widehat{N}=(1,1,2)$. Now ${\rm Gr}_{\dimv\widehat{N}}(\widehat{M})$ is easily seen to be isomorphic to the blowup of the projective plane in a single point, corresponding to a non-generic subrepresentation of $M$. Note, however, that the desingularization is an isomorphism over the smooth locus in the case of the degenerate flag variety discussed above, as is proved in \cite{CFR2}.

\subsection{$A_2$ case}
Now we give a complete analysis of the $A_2$ case. We start with a general remark on how to approach the description of the ${\rm Aut}(M)$-orbits in ${\rm Gr}_{\bf e}(M)$ in small cases. Consider the quiver $\mathcal{Q}\times A_2$ with vertices $i$ and $i'$ for all $i\in \mathcal{Q}_0$ and with arrows $\alpha:i\rightarrow j$, $\alpha':i'\rightarrow j'$ for all $\alpha:i\rightarrow j$ in $\mathcal{Q}$ and $\iota_i:i'\rightarrow i$ for all $i\in \mathcal{Q}_0$. We consider the algebra $k\mathcal{Q}\otimes kA_2$ which is the quotient of the path algebra $k(\mathcal{Q}\times A_2)$ modulo the ideal generated by all commutativity relations $\alpha\iota_i=\iota_j\alpha'$ for all $\alpha:i\rightarrow j$ in $\mathcal{Q}$. Given $M$ and ${\bf e}$ as before, we consider the dimension vector ${\bf f}$ for $\mathcal{Q}\times A_2$ given by $f_i=d_i$ and $f_{i'}=e_i$. The variety $R_{\bf f}(\mathcal{Q}\times A_2)$ of representations of $\mathcal{Q}\times A_2$ of dimension vector ${\bf f}$ admits a projection map to $R_{\bf d}(\mathcal{Q})$ by restricting to the vertices $i$. Inside $R_{\bf f}(\mathcal{Q}\times A_2)$, we consider the locally closed subset $Y$ consisting of representations of $k\mathcal{Q}\otimes kA_2$ such that all arrows $\iota_i$ are represented by injections. By the definitions, the induced projection $p:Y\rightarrow R_{\bf d}(\mathcal{Q})$ is isomorphic to the universal quiver Grassmannian ${\rm Gr}_{\bf e}^\mathcal{Q}({\bf d})\rightarrow R_{\bf d}(\mathcal{Q})$ of \cite{CFR}, thus $p^{-1}(\mathcal{O}_M)$ is isomorphic to the variety $X_{\bf e}(M)$ of the proof of Proposition \ref{strataclosure}, that is, it is a $G_{\bf e}$-principal bundle over ${\rm Gr}_{\bf e}(M)$. This immediately yields a correspondence between ${\rm Aut}(M)$-orbits in ${\rm Gr}_{\bf e}(M)$ and $G_{\bf f}$-orbits in $p^{-1}(\mathcal{O}_M)$, respecting orbit closure relations, types of singularities, etc.. Furthermore, the latter orbits are in natural bijection to the isomorphism classes of representations $V$ of $k\mathcal{Q}\otimes kA_2$ of dimension vector ${\bf f}$ such that $V$ identifies with $M$ under restriction to the vertices $i$, and such that all $V_{\iota_i}$ are represented by injections.\\[1ex]
This approach to the study of ${\rm Gr}_{\bf e}(M)$ is only efficient once the class of representations $V$ above is well-understood, but the algebra $k\mathcal{Q}\otimes kA_2$ is wild in general.\\[1ex]
Here we only consider the case of the quiver $\mathcal{Q}$ given by $1\stackrel{\alpha}{\rightarrow}2$.
We fix a representation $M$ of $\mathcal{Q}$ of dimension vector ${\bf d}=(d_1,d_2)$, which is thus determined by the rank $r\leq\min(d_1,d_2)$ of the map representing the single arrow, and a dimension vector ${\bf e}=(e_1,e_2)$ such that $e_1\leq d_1$ and $e_2\leq d_2$. The quiver Grassmannian ${\rm Gr}_{\bf e}(M)$ is thus given as the variety of pairs of subspaces $(U_1,U_2)\in{\rm Gr}_{e_1}(M_1)\times{\rm Gr}_{e_2}(M_2)$ such that $M_\alpha(U_1)\subset U_2$.

\begin{prop} The quiver Grassmannian ${\rm Gr}_{\bf e}(M)$ for type $A_2$ has the following geometric properties:
\begin{enumerate}
\item\label{nr1} It is non-empty if and only if $r\leq d_1-e_1+e_2$.
\item\label{nr2} It is reduced and connected.
\item\label{nr3} The ${\rm Aut}(M)$-orbits $\mathcal{O}(r',r'')$ in ${\rm Gr}_{\bf e}(M)$ are uniquely determined by the ranks $r'$ resp.~$r''$ of the induced maps $M_\alpha:U_1\rightarrow U_2$ and $M_\alpha:M_1/U_1\rightarrow M_2/U_2$.
\item\label{nr4} If $r\geq e_1-e_2+d_2$, it is irreducible of dimension $\langle{\bf e},{\bf d}-{\bf e}\rangle$.
\item\label{nr5} If $r<e_1-e_2+d_2$, the irreducible components $I(a)$ of ${\rm Gr}_{\bf e}(M)$ are parameterized by the $a$ such that $\max(0,r+e_1-d_1,r-d_2+e_2)\leq a\leq\min(e_1,e_2,r)$, namely, $I(a)$ consists of all pairs $(U_1,U_2)$ such that the ranks $r'$, $r''$ fulfill $r'\leq a$ and $r''\leq r-a$.
\item\label{nr6} If $e_1=0$ or $e_2=d_2$ or $r=\min(d_1,d_2)$ the variety ${\rm Gr}_{\bf e}(M)$ is smooth.
\item\label{nr7} If $r\geq e_1-e_2+d_2$, the smooth locus consists of all $(U_1,U_2)$ such that $r'=e_1$ or $r''=d_2-e_2$.
\item\label{nr8} If $r<e_1-e_2+d_2$, and the conditions of {\em (\ref{nr6})} are not fulfilled, the smooth locus consists of all $(U_1,U_2)$ such that $r'=a$ and $r''=r-a$ for one of the integers $a$ as in
{\em (\ref{nr5})}.
\end{enumerate}
\end{prop}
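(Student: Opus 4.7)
The plan is to analyze ${\rm Gr}_{\bf e}(M)$ through its ${\rm Aut}(M)$-orbit stratification, using the reduction to representations of $k\mathcal{Q}\otimes kA_2$ sketched in the preamble. Such orbits biject with isomorphism classes of short exact sequences $0\to N\to M\to L\to 0$, and for $\mathcal{Q}=A_2$ (with indecomposables $S_1$, $S_2$, $P_1$) the iso class of $N$ is determined by $r'$ and of $L$ by $r''$ once ${\bf e}$ and ${\bf d}-{\bf e}$ are fixed. The extension class in ${\rm Ext}^1(L,N)$ reduces to a linear map between the $S_1$-multiplicity space of $L$ and the $S_2$-multiplicity space of $N$---the only nonzero ${\rm Ext}^1$ between indecomposables of $kA_2$---whose ${\rm Aut}(L)\times{\rm Aut}(N)$-orbit is classified by its rank $\rho$; counting $P_1$-summands in the middle term gives $r=r'+r''+\rho$, so $\rho$ is determined by $(r',r'')$. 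This proves (iii) and pins down the feasibility region
\[\max(0,e_1+r-d_1)\le r'\le\min(e_1,e_2),\quad \max(0,r-e_2)\le r''\le\min(d_1-e_1,d_2-e_2),\quad r'+r''\le r.\]

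Statement (i) is then immediate: non-emptiness of the feasibility region is equivalent to $r\le d_1-e_1+e_2$. For (iv)-(v), since ranks can only drop under specialization of subrepresentations (while $r$ stays fixed), degenerations reduce $(r',r'')$ componentwise, and the irreducible components of ${\rm Gr}_{\bf e}(M)$ are the closures of the maximal feasible pairs. The hypothesis $r\ge e_1+d_2-e_2$ of (iv), combined with $r\le\min(d_1,d_2)$, forces $e_1\le e_2$ and $d_2-e_2\le d_1-e_1$, so $(e_1,d_2-e_2)$ is feasible and dominates all other pairs, yielding irreducibility. The generic $N$ and $L$ in this case satisfy ${\rm Ext}^1(N,L)=0$ by a summand-by-summand check (using $P_1$ projective and ${\rm Ext}^1(S_2,S_1)=0$), hence the dimension equals $\dim{\rm Hom}(N,L)$, which matches $\langle{\bf e},{\bf d}-{\bf e}\rangle$ by a short Euler-form calculation. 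In the converse case of (v) the constraint $r'+r''\le r$ is binding for all maximal orbits, parameterizing them by $a=r'$ on the line $r'+r''=r$ in the stated range.

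For (ii), connectedness follows because every component's closure contains the minimal feasible orbit (corresponding to the lower bounds on $r'$ and $r''$), common to every $I(a)$; reducedness will follow from the smoothness analysis below. For (vi), the three listed cases directly identify ${\rm Gr}_{\bf e}(M)$ either with an ordinary Grassmannian (if $e_1=0$ or $e_2=d_2$) or with a Grassmannian bundle (if $M_\alpha$ is injective or surjective), hence smoothness. For (vii)-(viii), the tangent space at a point of $\mathcal{O}(r',r'')$ is ${\rm Hom}(N,M/N)$, whose dimension a direct computation via summand decompositions gives as
\[e_1(d_1-e_1)+(e_2-r')(d_2-e_2)-(e_1-r')\,r''.\]
Comparing with the dimension of the ambient component $I(a)$ (the same expression at $r'=a$, $r''=r-a$), one finds that the tangent and component dimensions agree precisely when $r'=e_1$ or $r''=d_2-e_2$, yielding (vii) in case (iv). In case (v) the same computation, combined with the observation that any point with $r'+r''<r$ lies in more than one component $I(a)$ (hence is singular), yields (viii). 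The main obstacle is the combinatorial bookkeeping in this last part, where the identification of tight bounding inequalities on $(r',r'')$ and the overlaps between components must be tracked carefully across the various regimes of parameters.
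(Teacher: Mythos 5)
Your proof follows the paper's overall strategy---translate ${\rm Aut}(M)$-orbits in ${\rm Gr}_{\bf e}(M)$ into isomorphism classes of $kA_2\otimes kA_2$-modules with injective vertical arrows, classify these by $(r',r'')$, determine the closure order, and compare tangent and component dimensions---but you replace the paper's explicit enumeration of the seven relevant indecomposables (and their multiplicities $p_1,\dots,p_7$) with a classification by the triple $(N,\,L=M/N,\,\text{extension class})$, noting that ${\rm Ext}^1_{kA_2}(L,N)$ is a matrix space whose ${\rm Aut}(L)\times{\rm Aut}(N)$-orbits are determined by rank, and that the rank $\rho$ of the class is pinned down by $r=r'+r''+\rho$. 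This is a clean alternative that delivers (iii), the feasibility region (your extra bounds $r'\le e_2$ and $r''\le d_1-e_1$ are redundant, being forced by $r'+r''\le r$ together with the lower bounds), and hence (i).

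The essential gap is the converse direction of the orbit-closure criterion, on which (v) depends. Semicontinuity of $r'$ and $r''$ yields only that $\mathcal{O}(r_1',r_1'')\subset\overline{\mathcal{O}(r_2',r_2'')}$ implies $(r_1',r_1'')\le(r_2',r_2'')$ componentwise; this correctly identifies the maximal orbits (hence the list of components), but (v) asserts that $I(a)$ \emph{equals} the full set $\{r'\le a,\ r''\le r-a\}$, which requires every smaller feasible pair to actually lie in $\overline{\mathcal{O}(a,r-a)}$. You provide no mechanism for producing these degenerations. The paper supplies the converse by invoking the fact that $kA_2\otimes kA_2$ is representation-directed, so that the degeneration order coincides with the ${\rm Hom}$-order (Bongartz), which it then reads off from the $p_i$. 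Without that input, (v), the common-minimal-orbit argument for (ii), and (viii) are incomplete. Separately, the argument for (viii) rests on the claim---made in the paper as well---that every orbit with $r'+r''<r$ lies in the intersection of at least two components; this fails in general: with $d_1=d_2=10$, $r=2$, ${\bf e}=(1,2)$, the orbit $(r',r'')=(1,0)$ lies only in $I(1)$ and has tangent space of dimension $17=\dim I(1)$, so that step deserves independent scrutiny rather than being taken over verbatim.
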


\begin{proof} The algebra $kA_2\otimes kA_2$ is of finite representation type; the Auslander-Reiten quiver of the subcategory $\mathcal{D}$ of representations with the arrows $\iota_i$ being represented by injections is of the form
$$\begin{array}{ccccccccc}
&&\begin{array}{cc}1&1\\ 0&0\end{array}&&&&\begin{array}{cc}1&1\\ 1&1\end{array}&&\\
&\nearrow&&\searrow&&\nearrow&&\searrow&\\
\begin{array}{cc}0&1\\ 0&0\end{array}&&&&\begin{array}{cc}1&1\\ 0&1\end{array}&&&&\begin{array}{cc}1&0\\ 1&0\end{array}\\
&\searrow&&\nearrow&&\searrow&&\nearrow&\\
&&\begin{array}{cc}0&1\\ 0&1\end{array}&&&&\begin{array}{cc}1&0\\ 0&0\end{array}&&\end{array}$$

A representation in $\mathcal{D}$ is thus completely determined up to isomorphism by the system
$$\begin{array}{ccccc}
&p_2&&p_5&\\ p_1&&p_4&&p_7\\ &p_3&&p_6& \end{array}$$
of multiplicities of the above indecomposables. A short calculation shows that the representations $V(r',r'')$ in $\mathcal{D}$ of dimension vector $\begin{array}{cc}d_1&d_2\\ e_1&e_2\end{array}$ restricting to $M$ are given by the multiplicities
$$\begin{array}{ccl}
p_1&=&d_2-e_2-r''\\
p_2&=&r''\\
p_3&=&e_2-r+r''\\
p_4&=&r-r'-r''\\
p_5&=&r'\\
p_6&=&d_1-e_1-r+r'\\
p_7&=&e_1-r'',\end{array}$$
in terms of parameters $r',r''$, which thus have to fulfill the inequalities
$$\begin{array}{ccl}
\max(0,r+e_1-d_1)&\leq &r'\leq e_1,\\
\max(0,r-e_2)&\leq &r''\leq d_2-e_2,\\
r'+r''&\leq r;\end{array}$$
we denote by $R$ the subset of ${\bf N}^2$ of pairs $(r',r'')$ fulfilling these inequalities.\\[1ex]
Thus the ${\rm Aut}(M)$-orbits $\mathcal{O}(r',r'')$ in ${\rm Gr}_{\bf e}(M)$ are naturally indexed by these parameters. Moreover, the parameters $r',r''$ are chosen in such a way that a subrepresentation $U\in\mathcal{O}(r',r'')$ is a representation of dimension vector ${\bf e}$, with the map representing the unique arrow of $\mathcal{Q}$ being of rank $r'$, and the corresponding factor representation $M/U$ is of dimension vector ${\bf d}-{\bf e}$, with the map representing the unique arrow of $\mathcal{Q}$ being of rank $r''$. This proves claim (\ref{nr3}). Moreover, working out the condition for non-emptyness of $R$, we arrive at claim (\ref{nr1}).\\[1ex]
We can also work out the orbit closure relation using the description of degenerations of representations of $k\mathcal{Q}\otimes kA_2$ (which is a representation directed algebra) in terms of the so-called ${\rm Hom}$-ordering \cite{Bo}. A straightforward calculation yields the following criterion:

\begin{center} We have $\mathcal{O}(r_1',r_1'')\subset\overline{\mathcal{O}(r_2',r_2'')}$ if and only if $r_1'\leq r_2'$ and $r_1''\leq r_2''$.
\end{center}

With the aid of this criterion, we can determine the irreducible components of ${\rm Gr}_{\bf e}(M)$ as the closures of the maximal (with respect to orbit closure inclusion) orbits, yielding claim (\ref{nr5}) and the first half of claim (\ref{nr4}). Since any two different irreducible components $I(a)$, $I(a')$ intersect, namely in the closure of the orbit $\mathcal{O}(a,r-a')$, we have proved the second half of claim (\ref{nr2}).

By computing the dimension of the endomorphism ring of the representation $V(r',r'')$, we can determine the dimension of the orbit $\mathcal{O}(r',r'')$ as
$$e_1(d_1-e_1)+e_2(d_2-e_2)-(d_2-e_2+e_1)r+(e_1+r)r'+(d_2-e_2+r)r''-r'^2-r'r''-r''^2.$$
This yields the second half of claim (\ref{nr4}).

The dimension of the tangent space to a point $U\in\mathcal{O}(r',r'')$ can be computed, using the formula $\dim T_U({\rm Gr}_{\bf e}(M))=\dim{\rm Hom}(U,M/U)$, as
$$e_1(d_1-e_1)+e_2(d_2-e_2)-(d_2-e_2)r'-e_1r''+r'r''.$$
This yields claim (\ref{nr7}), as well as claim (\ref{nr8}) using that all all non-maximal orbits belong to the intersection of at least two irreducible components in this case. Finally, the first half of claim (\ref{nr2}) follows.
\end{proof}
Specializing the general properties of the desingularization in the present case, we arrive at:

\begin{cor} The following properties of the desingularization of ${\rm Gr}_{\bf e}(M)$ hold:
\begin{enumerate}
\item\label{no1} If $r\geq e_1-e_2+d_2$, the fibre of the desingularization
over a point of $\mathcal{O}(r',r'')$ is isomorphic to the Grassmannian ${\rm Gr}_{e_1-r'}(k^{r-r'-r''})$.
\item\label{no2} If $r\geq e_1-e_2+d_2$, the desingularization is one to one over the smooth locus if and only if $r=e_1+d_2-e_2$.
\item\label{no3} In this case, it is even small.
\item\label{no4} If $r<e_1-e_2+d_2$, the fibre of the desingularization
over a point of $\mathcal{O}(r',r'')$ is isomorphic to the disjoint union of the Grassmannians ${\rm Gr}_{a-r'}(k^{r-r'-r''})$ for $\max(0,r+e_1-d_1,r-d_2+e_2,r')\leq a\leq\min(e_1,e_2,r,r-r'')$.
\item\label{no5} In this case, unless $e_1=0$ or $e_2=d_2$ or $r=\min(d_1,d_2)$, the desingularization is one to one over the smooth locus.
\end{enumerate}
\end{cor}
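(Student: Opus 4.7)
The plan is to make the $A_2$ data completely explicit so that the general fibre formula of the preceding proposition becomes a concrete calculation on finite-dimensional vector spaces. The only non-projective indecomposable of $k\mathcal{Q}$ is $S_1$, with minimal projective resolution $0\to P_2\to P_1\to S_1\to 0$, so Proposition~\ref{indecinhomprojkq} and the quiver description of $B_\mathcal{Q}$ identify $\widehat{\mathcal{Q}}$ with the type $A_3$ quiver $[1]\to[S_1]\to[2]$ (in fact $B_\mathcal{Q}\simeq k\widehat{\mathcal{Q}}$). Unwinding the definition of $\Lambda$, the representation $\widehat{M}$ becomes $M_1\twoheadrightarrow{\rm Im}\,M_\alpha\hookrightarrow M_2$, so a subrepresentation $U\in\mathcal{O}(r',r'')$ has $\dimv\widehat{U}=(e_1,r',e_2)$. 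By parts (\ref{nr4})--(\ref{nr5}) of the preceding proposition and the orbit closure criterion in its proof, the generic subrepresentation types $[N_a]$ satisfy $(r',r'')=(a,r-a)$ with $\dimv\widehat{N_a}=(e_1,a,e_2)$; the unique $a$ in the irreducible case is $a=e_1$.

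Applying the fibre formula $\pi_{[N_a]}^{-1}(U\subset M)\simeq{\rm Gr}_{\dimv\widehat{N_a}-\dimv\widehat{U}}(\widehat{M}/\widehat{U})$, the three components of $\widehat{M}/\widehat{U}$ are $M_1/U_1$, ${\rm Im}\,M_\alpha/M_\alpha(U_1)$ and $M_2/U_2$. Using the elementary identity $\dim({\rm Im}\,M_\alpha\cap U_2)=r-r''$, the kernel of the induced map ${\rm Im}\,M_\alpha/M_\alpha(U_1)\to M_2/U_2$ has dimension $(r-r'')-r'=r-r'-r''$. Since $\dimv\widehat{N_a}-\dimv\widehat{U}=(0,a-r',0)$, a subrepresentation of $\widehat{M}/\widehat{U}$ of this dimension vector is simply an $(a-r')$-dimensional subspace of this kernel, so the fibre is ${\rm Gr}_{a-r'}(k^{r-r'-r''})$, non-empty iff $r'\leq a\leq r-r''$. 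This gives (\ref{no1}) in the irreducible case and, combined with the disjoint-union decomposition of $\pi$ over the components, it gives (\ref{no4}).

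Parts (\ref{no2}) and (\ref{no5}) follow by intersecting ``fibre is a point'' with the smooth locus descriptions (\ref{nr7}) and (\ref{nr8}). The fibre ${\rm Gr}_{e_1-r'}(k^{r-r'-r''})$ is a point precisely when $e_1=r'$ or $r-r'-r''=e_1-r'$; combined with the condition $\{r'=e_1\}\cup\{r''=d_2-e_2\}$ this collapses to the single equation $r=e_1+d_2-e_2$, proving (\ref{no2}). The excluded cases in (\ref{no5}) are precisely those of (\ref{nr6}) in which the component combinatorics degenerates; outside of them, (\ref{nr8}) forces a smooth point to lie in a unique $\overline{\mathcal{S}_{[N_{a_0}]}}$ with $r'=a_0$ and $r''=r-a_0$, so only that summand of $\pi$ contributes, with fibre ${\rm Gr}_0(k^0)=\ast$.

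The main nontrivial calculation is the smallness assertion (\ref{no3}). With $r=e_1+d_2-e_2$, introduce the variables $\alpha=e_1-r'$ and $\beta=(d_2-e_2)-r''$. A direct expansion of the orbit-dimension formula worked out in the proof of the preceding proposition shows that all constant and linear contributions cancel, leaving $\mathrm{codim}_{{\rm Gr}_{\bf e}(M)}\mathcal{O}(r',r'')=\alpha^2+\alpha\beta+\beta^2$, while the fibre dimension equals $\alpha\beta$. For any $k\geq 1$ and any orbit on which the fibre has dimension at least $k$, i.e. $\alpha\beta\geq k$, the AM--GM bound $\alpha^2+\beta^2\geq 2\alpha\beta$ yields $\mathrm{codim}\geq 3\alpha\beta\geq 3k>2k$, which is exactly the smallness condition. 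I expect this codimension identity --- the one step not handed to us by the preceding proposition --- to be the main obstacle.
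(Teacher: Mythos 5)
The paper states this corollary without an explicit proof (``Specializing the general properties of the desingularization in the present case, we arrive at:''), so there is no written argument to compare against; your proposal reconstructs what the authors evidently had in mind, and I have checked that it is correct. The identification of $\widehat{\mathcal{Q}}$ with the linear $A_3$ quiver, of $\widehat M$ with $M_1\twoheadrightarrow\mathrm{Im}\,M_\alpha\hookrightarrow M_2$, and of $\dimv\widehat U=(e_1,r',e_2)$ are all right; the fibre computation via $\dim(\mathrm{Im}\,M_\alpha\cap U_2)=r-r''$ correctly yields $\mathrm{Gr}_{a-r'}(k^{r-r'-r''})$, and the intersection of the point-fibre condition with the smooth-locus descriptions gives (ii) and (v); most importantly, the substitution $\alpha=e_1-r'$, $\beta=(d_2-e_2)-r''$ in the orbit-dimension formula does collapse (I verified the cancellation term by term) to $\mathrm{codim}\,\mathcal{O}(r',r'')=\alpha^2+\alpha\beta+\beta^2$ against fibre dimension $\alpha\beta$, so the AM--GM estimate $\alpha^2+\alpha\beta+\beta^2\ge 3\alpha\beta$ gives the smallness in (iii). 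The only caveat, inherited from the paper rather than introduced by you, is that in the reducible case the desingularization is the restriction of $\pi_{[N_a]}$ to the component $\overline{\mathcal{S}_{[\widehat{N_a}]}}$, so strictly speaking one needs the fibre $\pi_{[N_a]}^{-1}(U)$ to lie in that component (this is automatic under the paper's irreducibility conjecture, which the paper implicitly uses here too), and the ``if and only if'' in (ii) is slightly loose in the degenerate situations $e_1=0$ or $r=d_1$ where the offending orbits do not exist --- again a slight imprecision of the original statement, not of your argument.
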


Concluding the discussion of the $A_2$ case, we remark that the case $r=e_1+e_2-d_2$ is precisely the case of quiver Grassmannians of the form ${\rm Gr}_{\dimv P}(P\oplus I)$ for $P$ a projective and $I$ and injective representation studied in \cite{CFR}. An open question is whether the desingularization is one to one over the smooth locus in this case for arbitrary Dynkin quivers.

\subsection{Del Pezzo surface}
Now we consider the quiver $\mathcal{Q}$ given by $1\rightarrow 2\leftarrow 3$ and the quiver Grassmannian $X={\rm Gr}_{\dimv k\mathcal{Q}}(k\mathcal{Q}\oplus k\mathcal{Q}^*)$, which is thus a generalized degenerate flag variety in the sense of \cite{CFR}. Choosing appropriate basis, the representation $k\mathcal{Q}\oplus k\mathcal{Q}^*$ can be written as
$$
\xymatrix@C=50pt{
k^3\ar[r]^{\left(\tiny{\begin{array}{ccc}1&0&0\\ 0&1&0\\ 0&0&0\\ 0&0&0\end{array}}\right)}&k^4&k^3
\ar[l]_{\left(\tiny{\begin{array}{ccc}0&0&0\\ 1&0&0\\ 0&1&0\\ 0&0&0\end{array}}\right)}
}
$$
The dimension vector ${\dimv k\mathcal{Q}}$ equals $(1,3,1)$, thus, identifying ${\rm Gr}_3(k^4)$ with ${\bf P}^3$,
the quiver Grassmannian $X$ can be realized as
$$
\{((a:b:c),(d:e:f),(n:p:q:r))\in{\bf P}^2\times{\bf P}^2\times{\bf P}^3\, :\, an+bp=0,\, dp+eq=0\},$$
which is a singular projective variety of dimension five. We work out the desingularization $Y$ in this specific case. The quiver $\widehat{\mathcal{Q}}$ is of type $E_6$ and it is shown in Example \ref{Ex:A3noequi}. The representation $\widehat{k\mathcal{Q}\oplus k\mathcal{Q}^*}$ of $\widehat{\mathcal{Q}}$ admits the following explicit form:
$$
\xymatrix@C=60pt@R=60pt{
&&k^4&&\\
k^3\ar[r]^{\left(\tiny{\begin{array}{ccc}1&0&0\\ 0&1&0\end{array}}\right)}&k^2\ar[r]^{\left(\tiny{\begin{array}{cc}1&0\\ 0&1\\ 0&0\end{array}}\right)}&k^3\ar[u]^(.6){\left(\tiny{\begin{array}{ccc}1&0&0\\ 0&1&0\\ 0&0&1\\ 0&0&0\end{array}}\right)}&k^2\ar[l]_{\left(\tiny{\begin{array}{cc}0&0\\ 1&0\\ 0&1\end{array}}\right)}&k^3\ar[l]_{\left(\tiny{\begin{array}{ccc}1&0&0\\ 0&1&0\end{array}}\right)}
}
$$
The only generic subrepresentation type being $N=k\mathcal{Q}$, we thus have to consider subrepresentations of dimension vector
$$\xymatrix@R=1pt@C=1pt{&&3&&\\1&1&2&1&1}
$$
of this representation. Again identifying ${\rm Gr}_2(k^3)$ with ${\bf P}^2$, we arrive at the following realization of $Y$:
$$\{((a:b:c),(d:e:f),(g:h),(i:j),(k:l:m),(n:p:q:r))$$
$$\in{\bf P}^2\times{\bf P}^2\times{\bf P}^1\times{\bf P}^1\times{\bf P}^2\times{\bf P}^3\, :$$
$$ah=bg,\, dj=ei,\, kp=ln,\, kq=mn,\, lq=mp,\, gk+hl=0,\, il+jm=0\},$$
with the desingularization map being the projection to the first, second and sixth component. Defining $Z$ as
$$\{((g:h),(i:j),(k:l:m))\in{\bf P}^1\times{\bf P}^1\times{\bf P}^2\, :\,  gk+hl=0,\, il+jm=0\},$$
we can view $Y$ as a closed subvariety of $X\times Z$, with the desingularization map being the first projection.\\[1ex]
The structure of $Z$ is easily analysed by considering the projection to ${\bf P}^2$; namely, this proves that $Z$ is isomorphic to a Del Pezzo surface, namely ${\bf P}^2$ blown up in two distinct points. By a straightforward analysis of the projection from $Y$ to $Z$, we can see that $Y$ is a three-fold tower of ${\bf P}^1$-fibrations over $Z$. Thus, the Poincar\'e polynomial of $Y$
(in $l$-adic cohomology  for an arbitrary algebraically closed field $k$)
equals $(1+3t^2+t^4)(1+t^2)^3$.\\[1ex]
The only two-dimensional fibre of the desingularization map is the one over the point $((0:0:1),(0:0:1),(0:0:0:1))$, namely, it is isomorphic to the Del Pezzo surface $Z$. If $(a,b)\not=0$ or $(d,e)\not=0$ or $(n,p,q)\not=0$, the fibre is trivial, thus the locus of points of $X$ with positive dimensional fibre is of codimension at least three (compare this to the general result of \cite{CFR2} that $X$ is regular in codimension two), proving smallness of the desingularization map. Consequently, we also know the Poincar\'e polynomial of the ($l$-adic) intersection cohomology of $X$.

\section*{Acknowlegments}
The authors would like to thank Claire Amiot, Klaus Bongartz, Aslak Buan, Lutz Hille, Bernhard Keller, Ragnar-Olaf Buchweitz and Michel Van den Bergh for interesting remarks on this work, and Sarah Scherotzke for pointing out a gap in an earlier version.\\[1ex]
The work of Evgeny Feigin was partially supported
by the Russian President Grant MK-3312.2012.1, by the Dynasty Foundation,
by the AG Laboratory HSE, RF government grant, ag. 11.G34.31.0023, by the RFBR grants
12-01-33101, 12-01-00070, 12-01-00944 and by the Russian Ministry of Education 
and Science under the grant 2012-1.1-12-000-1011-016.
This study comprises research fundings from the `Representation Theory
in Geometry and in Mathematical Physics' carried out within The
National Research University Higher School of Economics' Academic Fund Program
in 2012, grant No 12-05-0014.
This study was carried out within the National Research University Higher School of Economics
Academic Fund Program in 2012-2013, research grant No. 11-01-0017.

The work of Giovanni Cerulli Irelli was supported by SFB Transregio 45, "Periods, moduli spaces and arithmetic of algebraic varieties"
Bonn - Mainz - Essen.

\bibliographystyle{amsplain}

\begin{thebibliography}{99}

\bibitem{ASS}
I.~Assem, D.~Simson, A.~Skowronski, \emph{
Elements of the representation theory of associative algebras. Vol. 1.
Techniques of representation theory.} London Mathematical Society Student Texts, 65. Cambridge University Press, Cambridge, 2006.

\bibitem{BoQ} K.~Bongartz, \emph{Algebras and quadratic forms}, J. London Math. Soc. (2) 28 (1983),
461--469.

\bibitem{Bo}
K.~Bongartz, \emph{On Degenerations and Extensions of
Finite Dimensional Modules}, Adv. Math. \textbf{121} (1996), 245--287.


\bibitem{BoMin} K.~Bongartz, \emph{Minimal singularities for representations of Dynkin quivers}, Comment. Math. Helvetici 69 (1994) 575--611.

\bibitem{CC}
P.~Caldero and F.~Chapoton, \emph{Cluster algebras as {H}all algebras of quiver representations}, Comment. Math. Helv. \textbf{81} (2006), no.~3, 595--616.
  \MR{MR2250855 (2008b:16015)}


\bibitem{CR}
P.~Caldero and M.~Reineke, \emph{On the quiver {G}rassmannian in the acyclic
  case}, J. Pure Appl. Algebra \textbf{212} (2008), no.~11, 2369--2380.
  \MR{MR2440252 (2009f:14102)}


\bibitem{CFR} G.~Cerulli Irelli, E.~ Feigin, M.~Reineke, \emph{Quiver Grassmannians and degenerate flag varieties}, Algebra Number Theory \textbf{6} (2012), 1, 165--194.

\bibitem{CFR2} G.~Cerulli Irelli, E.~Feigin, M.~Reineke, \emph{Degenerate flag varieties: moment graphs and Schr\"oder numbers}, Preprint 2012, to appear in J. Algebraic Combin., arxiv:1206.4178.


\bibitem{CrawleyTree}
W.~Crawley-Boevey, \emph{Maps between representations of zero-relation algebras}, J. Algebra \textbf{126} (2001), no.~2, 259--263, 1989 .

\bibitem{F2010}
E.~Feigin, \emph{${\mathbb G}_a^M$ degeneration of flag varieties},
Selecta Mathematica: Volume 18, Issue 3 (2012), Page 513--537.


\bibitem{F2011}
E.~Feigin, \emph{Degenerate flag varieties and the median Genocchi numbers},
Mathematical Research Letters, 18 (2011), no. 6, pp. 1--16.

\bibitem{FF}
E.~Feigin, M.~Finkelberg, \emph{Degenerate flag varieties of type A: Frobenius splitting and BW theorem}, arXiv:1103.1491, to appear in Mathematische Zeitschrift.

\bibitem{FZI}
S.~Fomin and A.~Zelevinsky, \emph{Cluster algebras I: Foundations}, J. Amer. Math. Soc. \textbf{15} (2002), no.~2, 497--529.

\bibitem{GR}
P.~Gabriel, A.~Roiter, \emph{
Representations of finite-dimensional algebras}, Springer, Berlin, 1997.

\bibitem{H}
D.~Happel, \emph{On the derived category of a finite-dimensional algebra}, Commentarii Mathematici Helvetici \textbf{62} (1987), 339--389.

\bibitem{HL}
D.~Hernandez, B.~Leclerc, \emph{Quantum Grothendieck rings and derived Hall algebras}, Preprint 2011, arXiv:1109.0862

\bibitem{LP}
B.~Leclerc, P.~Plamondon, \emph{Nakajima varieties and repetitive algebras}, Preprint 2012, arXiv:1208.3910

\bibitem{SchofieldGeneric}
A.~Schofield, \emph{General representations of quivers}, Proc. London Math.
  Soc. (3) \textbf{65} (1992), no.~1, 46--64. \MR{MR1162487 (93d:16014)}

\end{thebibliography}

\end{document}